\newcommand{\pd}[2]{\frac{\partial #1}{\partial #2}}
\newcommand{\Dt}[1]{\frac{\textnormal{D} #1}{\textnormal{D}t}}
\newcommand{\di}{\textnormal{div\,}}
\newcommand{\D}[1]{\textnormal{D}(#1)}
\newcommand{\vcko}{\mathbf{v}}
\newcommand{\ucko}{\mathbf{u}}
\newcommand{\wcko}{\mathbf{w}}
\newcommand{\nko}{\mathbf{n}}
\newcommand{\Ccko}{\mathbf{C}}
\newcommand{\Dcko}{\mathbf{D}}
\newcommand{\Icko}{\mathbf{I}}
\newcommand{\Fko}{\mathbf{F}}
\newcommand{\fko}{\mathbf{f}}
\newcommand{\gcko}{\mathbf{g}}
\newcommand{\tr}[1]{\textnormal{tr}\, #1}
\newcommand{\trC}{\textnormal{tr}\, \mathbf{C}}
\newcommand{\n}[2]{\left\lVert{#2}\right\rVert_{#1}}
\newcommand{\trian}{\mathcal{T}_h}
\newcommand{\au}{\ucko_h^n}
\newcommand{\aou}{\ucko_h^{n-1}}
\newcommand{\ap}{p_h^n}
\newcommand{\aC}{\Ccko_h^n}
\newcommand{\aoC}{\Ccko_h^{n-1}}
\newcommand{\af}{\fko_h^n}
\newcommand{\aF}{\Fko_h^n}
\newcommand{\SPu}{\hat{\ucko}_h}
\newcommand{\SPp}{\hat{p}_h}
\renewcommand{\(}{\left(}
\renewcommand{\)}{\right)}
\newcommand{\defeq}{\vcentcolon=}
\newtheorem{defin}{Definition}
\newtheorem{theo}{Theorem}
\newtheorem{prop}{Proposition}
\newtheorem{hypo}{Hypothesis}
\newtheorem{lemma}{Lemma}
\newtheorem{remark}{Remark}
\newtheorem*{example}{Example}
\newcommand{\fz}{\frac}
\newcommand{\prz}[2]{ \frac{\partial{#1}}{\partial{#2}} }
\newcommand{\pz}{\partial}
\newcommand{\lA}{\langle}
\newcommand{\rA}{\rangle}
\newcommand{\ol}{\overline}
\newcommand{\barO}{\bar{\Omega}}
\newcommand{\disp}{\displaystyle}
\renewcommand{\Omega}{\varOmega}
\renewcommand{\Gamma}{\varGamma}
\renewcommand{\Psi}{\varPsi}
\renewcommand{\Pi}{\varPi}
\newcommand{\ecko}{\mathbf{e}}
\newcommand{\rcko}{\mathbf{r}}
\newcommand{\Ecko}{\mathbf{E}}
\newcommand{\Rcko}{\mathbf{R}}
\newcommand{\etacko}{{\boldsymbol\eta}}
\newcommand{\Xicko}{{\boldsymbol {\rm\Xi}}}
\providecommand{\keywords}[1]{\textit{Keywords:} #1}
\providecommand{\msc}[1]{\textit{2010 MSC:} #1}
\title{
Numerical analysis of the {Oseen}-type {Peterlin} viscoelastic model by the stabilized {Lagrange}--{Galerkin} method\\
Part~I: A nonlinear scheme
}
\author{
{M\'{a}ria Luk\'{a}\v{c}ov\'{a}-Medvid'ov\'{a}}$^{1}$,
{Hana~Mizerov\'{a}}$^{1}$,\\
{Hirofumi~Notsu}$^{2,3}$
and
{Masahisa~Tabata}$^{4}$
\bigskip\\
\normalsize
$^1$ Institute of Mathematics, University of Mainz, Mainz 55099, Germany \\
\normalsize
$^2$ Faculty of Mathematics and Physics, Kanazawa University, Kanazawa 920-1192, Japan \\
\normalsize
$^3$ Japan Science and Technology Agency~(JST), PRESTO, Saitama 332-0012, Japan\\
\normalsize
$^4$ Department of Mathematics, Waseda University, Tokyo 169-8555, Japan
}
\date{}
\begin{document}
\maketitle
\begin{abstract}
We present a nonlinear stabilized {Lagrange}--{Galerkin} scheme for the {Oseen}-type {Peterlin} viscoelastic model.
Our scheme is a combination of the method of characteristics and {Brezzi}--{Pitk\"{a}ranta}'s stabilization method for the conforming linear elements,
which yields an efficient computation with a small number of degrees of freedom.
We prove error estimates with the optimal convergence order without any relation between the time increment and the mesh size.
The result is valid for both the diffusive and non-diffusive models for the conformation tensor in two space dimensions.
We introduce an additional term that yields a suitable structural property and allows us to obtain required energy estimate. 
The theoretical convergence orders are confirmed by numerical experiments.
\par
In a forthcoming paper, Part~II, a linear scheme is proposed and the corresponding error estimates are proved in two and three space dimensions for the diffusive model.
\smallskip\\
\keywords{Error estimates, The {Peterlin} viscoelastic model, {Lagrange}--{Galerkin} method, Pressure-stabilization}
\msc{65M12, 76A05, 65M60, 65M25}
\end{abstract}
%
%
%
%
%
%
\section{Introduction}\label{sec:intro}
%
In the daily life we encounter many biological, industrial or geological fluids that do not satisfy the {Newtonian} assumption, i.e., the linear dependence between the stress tensor and the deformation tensor.
These fluids belong to the class of the {non-Newtonian} fluids.
In order to describe such complex fluids the stress tensor is represented as a sum of the viscous~({Newtonian}) part and the extra stress due to the polymer contribution.
\par
In literature we can find several models that are employed to describe various aspects of complex viscoelastic fluids.
One of the well-known viscoelastic models is the {Oldroyd-B} model, which is derived from the {Hookean} dumbbell model with a linear spring force law.
The model is  a system of equations for the velocity, the pressure and the extra stress tensor, cf., e.g.,~\cite{Ren-2000,Ren-2008}.
\par
Numerical schemes for  the {Oldroyd-B} type models have been studied by many authors.
For example, we can find a finite difference scheme based on the reformulation of the equation for the extra stress tensor by using the log-conformation representation in {Fattal} and {Kupferman}~\cite{FatKup-2004,FatKup-2005},
free energy dissipative {Lagrange}--{Galerkin} schemes with or without the log-conformation representation in {Boyaval} et al.~\cite{BoyLelMan-2009},
finite element schemes using the idea of the generalized {Lie} derivative in {Lee} and {Xu}~\cite{LeeXu-2006} and {Lee} et al.~\cite{LeeXuZha-2011},
and further related numerical schemes and computations in~\cite{AboMatWeb-2002,BonPicLas-2006,CroKeu-1982,Keu-1986,MarCro-1987,NadSeq-2007,LNS-2015,WapKeuLeg-2000} and references therein.
To the best of our knowledge, however, there are no results on error estimates of numerical schemes for the {Oldroyd-B} model.
As for the simplified {Oldroyd-B} model with no convection terms
{Picasso} and {Rappaz}~\cite{PicRap-2001} and {Bonito} et al.~\cite{BonClePic-2007} have given error estimates for stationary and non-stationary problems, respectively.
The development of stable and convergent numerical methods for  the {Oldroyd-B} type models, especially in the elasticity-dominated case, is still an active research area.
\par
In this paper, Part~I, and the forthcoming paper~\cite{LMNT-Peterlin_Oseen_Part_I}, Part II, we consider the so-called {Peterlin} viscoelastic model, which is a system of the flow equations and an equation for the conformation tensor, cf.~\cite{Ren-2000,Ren-2008}.
In~\cite{Pet-1966} Peterlin proposed a mean-field closure according to which the average of the elastic force over thermal fluctuations is replaced by the value of the force at the mean-squared polymer extension.
More precisely, instead of  the nonlinear spring force law~$F(R) = \gamma(|R|^2) R$ that acts in polymer dumbbells the Peterlin approximation ${F}({R})\approx \gamma( \langle |{R}|^2 \rangle ){R}$ is applied, where $R$ is the vector connecting the dumbbell beads and $\gamma$ is the spring constant.
That means, that the length of the spring in the spring constant $\gamma$ is replaced by the average length of the  spring $\langle |{R}|^2 \rangle \equiv \tr \Ccko$.
Consequently, we can derive an evolution equation for the conformation tensor $\Ccko$, which is in a closed form, cf.~\cite{Ren-2008, Ren-2000, RenWan-2015, LukMizNec-2015, Miz-2015}.
Note that in literature one can also find the Peterlin approximation in the context of finitely extensible nonlinear elastic (FENE) dumbbell model, which was subsequently termed the FENE-P model, cf.~\cite{BirDotJoh-1980}. In this model the denominator of the FENE force of the corresponding kinetic model is replaced  by the mean value of the elongation yielding the macroscopic FENE-P model.
On the other hand, {Renardy} recently proposed a general macroscopic constitutive model, that is motivated by Peterlin dumbbell theories with a nonlinear spring law for an infinitely extensible spring, see {Renardy}~\cite{RenWan-2015, Ren-2010} and a recent paper by {Luk\'{a}\v{c}ov\'{a}-Medvi\v{d}ov\'{a}} et al.~\cite{LMNR-2016}, where the global existence of weak solutions has been obtained.
The diffusive {Peterlin} viscoelatisc model studied in the present paper has been obtained by a particular choice of these general constitutive functions. 
This model has been studied analytically by {Luk\'{a}\v{c}ov\'{a}-Medvi\v{d}ov\'{a}} et al.~\cite{LukMizNec-2015}, where the global existence of weak solutions and the uniqueness of regular solutions have been proved.
Let us mention that, even when the velocity field is given, the equation for the conformation tensor in the {Peterlin} model is still nonlinear, while the {Oldroyd-B} model is linear with respect to the extra stress tensor.
Hence, we can say that the nonlinearity of the {Peterlin} model is stronger than that of the {Oldroyd-B} model.
As a starting point of the numerical analysis of the {Peterlin} model, we consider the {Oseen}-type model, where the velocity of the material derivative is replaced by a known one, in order to concentrate on the treatment of nonlinear terms arising from the elastic stress.
\par
Our aim is to develop a stabilized {Lagrange}--{Galerkin} method for the {Peterlin} viscoelastic model.
It consists of the method of characteristics and {Brezzi}--{Pitk\"{a}ranta}'s stabilization method~\cite{BrePit-1984} for the conforming linear elements.
The method of characteristics yields the robustness in convection-dominated flow problems,
and the stabilization method reduces the number of degrees of freedom in computation.
In our recent works by {Notsu} and {Tabata}~\cite{NT-2016-M2AN,NT-2015-JSC,NT-NCP} the stabilized {Lagrange}--{Galerkin} method has been applied successfully for the {Oseen}, {Navier}--{Stokes} and natural convection problems and optimal error estimates have been proved.
\par
We establish the numerical analysis of the stabilized {Lagrange}--{Galerkin} method for the {Oseen}-type {Peterlin} model in this paper, Part~I, and the forthcoming paper~\cite{LMNT-Peterlin_Oseen_Part_I}, Part~II.
The results of the two papers are summarized in Tables~\ref{table:summary} and~\ref{table:summary_dt}, where $\varepsilon$ is the diffusion coefficient in the equation for the conformation tensor, $d$ is the spatial dimension, $h$ is the representative mesh size and $\Delta t$ is the time increment.
\par
In Part~I, a nonlinear stabilized {Lagrange}--{Galerkin} scheme for the diffusive~($\varepsilon > 0$) and the non-diffusive~($\varepsilon = 0$) {Peterlin} model is presented and error estimates with the optimal convergence order are proved without any relation between discretization parameters~$\Delta t$ and~$h$ in two dimensions.
For the proof we rely on a key lemma, cf. Lemma~\ref{lem:vanish_nonlin}, in which a special structural property using  an additional term $( \di \au (\aC)^{\#},\Dcko_h )$ is shown.
However, this property does not hold in three-dimensional case.
This is the reason why the convergence result is shown only in two space dimensions.
The theoretical convergence orders are confirmed by numerical experiments.
Since the scheme is nonlinear, the existence and uniqueness of the scheme are studied additionally, and we show that the scheme has a solution without any relation between $h$ and $\Delta t$ and that the solution is unique for the diffusive and the non-diffusive cases under the conditions $\Delta t = O( 1/(1+ |\log h|)^2 )$ and $\Delta t = O( h )$, respectively, in two dimensions.
\par
In Part II a linear scheme for the diffusive model is presented and optimal error estimates are proved under mild stability conditions, $\Delta t = O( 1/ \sqrt{1+ |\log h|} \, )$ and $\Delta t = O( \sqrt{h} \, )$, in two and three dimensions, respectively.
Moreover, the existence and uniqueness of its numerical solution are shown as well.
The theoretical convergence orders are again confirmed  by numerical experiments.
\begin{table}[!h]
\centering
\caption{
Summary of our results in Part~I and Part~II.\
($\varepsilon$ is the diffusion coefficient for the conformation tensor and $d$ is the spatial dimension.)
}
\label{table:summary}
\begin{tabular}{crcrc}
\toprule
\phantom{{\LARGE $|$}}
&& Part~I && Part~II \\
\hline
\phantom{{\Huge $|$}}
Scheme
\phantom{{\Huge $|$}}
&& Nonlinear && Linear \\
\phantom{{\Huge $|$}}
$\varepsilon$
\phantom{{\Huge $|$}}
&& $ \ge 0$ && $ > 0$ \\
\phantom{{\Huge $|$}}
$d$
\phantom{{\Huge $|$}}
&& $2$ && $2$ and $3$ \\
\bottomrule
\end{tabular}
\end{table}
\begin{table}[!h]
\centering
\caption{
Conditions on the time increment~$\Delta t$ with respect to the mesh size~$h$.\
($\varnothing$  means that no condition is required.)
}
\label{table:summary_dt}
\begin{tabular}{crcrc}
\toprule
\phantom{{\Huge $I$}}
&& Part~I,\;\; $d=2$ && Part~II,\;\; $\varepsilon > 0$
\\
\cline{3-3}
\cline{5-5}
\phantom{{\Huge $|$}}
Existence
\phantom{{\Huge $|$}}
&&
$\varnothing$
&&
$\varnothing$
\\
Uniqueness
&&
\begin{minipage}[b]{5cm}
\centering
\vspace{1em}
\begin{tabular}{cc}
$\varepsilon > 0$
&
$\varepsilon = 0$
\\
\hline
$\disp O\Bigl( \fz{1}{(1+ |\log h|)^2} \Bigr)$
&
\phantom{$\Biggl|$}
$O(h)$
\phantom{$\Biggl|$}
\end{tabular}
\end{minipage}
&&
\begin{minipage}[c]{1cm}
\centering
$\varnothing$
\end{minipage}
\\
\begin{minipage}[c]{1.5cm}
\centering
Optimal error estimates
\end{minipage}
&&
$\varnothing$
&&
\begin{minipage}[c]{5cm}
\centering
\begin{tabular}{cc}
\phantom{{\Huge$I$}}
$d = 2$
\phantom{{\Huge$I$}}
&
$d = 3$
\\
\hline
$\disp O\Bigl( \fz{1}{\sqrt{1 + |\log h|}}\Bigr)$
&
\phantom{$\Biggl|$}
$O\bigl(\sqrt{h}\;\bigr)$
\phantom{$\Biggl|$}
\end{tabular}
\end{minipage}
\medskip
\\
\bottomrule
\end{tabular}
\end{table}
\par
Let us summarize that in both papers, Part~I~(nonlinear scheme) and Part~II~(linear scheme), we present the results for optimal error estimates (i)~for the non-diffusive case~$(\varepsilon = 0)$ in two space dimensions and (ii)~for the diffusive case~$(\varepsilon > 0)$ in three space dimensions, respectively.
\par
As mentioned in {Boyaval} et al.~\cite{BoyLelMan-2009}, the positive definiteness of the conformation tensor is important in the analysis of numerical schemes for the Oldroyd-B model and has been overcome by using, e.g., the log-conformation representation in {Fattal} and {Kupferman}~\cite{FatKup-2004,FatKup-2005}.
While some schemes preserving the positive definiteness have been developed, there are, as far as we know, no convergence results of such schemes.
In our papers, Part~I and Part~II, we have obtained the convergence results without any assumption on the positive definiteness. This is an additional feature of our proof.
\vspace{0.5em}
\par
The paper is organized as follows.
In Section~\ref{sec:model} the mathematical formulation of the Oseen-type {Peterlin} viscoelastic model is described.
In Section~\ref{sec:scheme} a nonlinear stabilized {Lagrange}--{Galerkin} scheme is presented.
The main result on the convergence with optimal error estimates is stated in Section~\ref{sec:main_result}, and proved in Section~\ref{sec:proofs}.
In Section~\ref{sec:uniqueness} uniqueness of the numerical solution is shown.
Theoretical order of convergence is confirmed by numerical experiments in Section~\ref{sec:numerics}.
%
%
%
%
%
%
%
%
%
%
%
%
%
%
%
%
%
%
%
\section{The {Oseen}-type {Peterlin} viscoelastic model}\label{sec:model}
The function spaces and the notation to be used throughout the paper are as follows.
Let $\Omega$ be a bounded domain in $\mathbb{R}^2$, $\Gamma\defeq\pz\Omega$ the boundary of $\Omega$, and~$T$ a positive constant.
For $m \in \mathbb{N}\cup \{0\}$ and $p\in [1,\infty]$ we use the {\rm Sobolev} spaces $W^{m,p}(\Omega)$, $W^{1,\infty}_0(\Omega)$, $H^m(\Omega) \, (=W^{m,2}(\Omega))$, $H^1_0(\Omega)$ and $L^2_0(\Omega)\defeq\{q\in L^2(\Omega); \int_\Omega q\,dx =0\}$.
Furthermore, we employ function spaces $H^m_{sym}(\Omega) \defeq \{\Dcko\in H^m (\Omega)^{2\times 2};~\Dcko = \Dcko^T\}$ and $C^m_{sym}(\barO) \defeq C^m(\barO)^{2\times 2}\cap H^m_{sym}(\Omega)$, where the superscript~$T$ stands for the transposition.
For any normed space $S$ with norm $\|\cdot\|_S$, we define function spaces $H^m(0,T; S)$ and $C([0,T]; S)$ consisting of $S$-valued functions in $H^m(0,T)$ and $C([0,T])$, respectively.
We use the same notation $(\cdot, \cdot)$ to represent the $L^2(\Omega)$ inner product for scalar-, vector- and matrix-valued functions.
The dual pairing between $S$ and the dual space $S^\prime$ is denoted by $\lA\cdot, \cdot\rA$.
The norms on $W^{m,p}(\Omega)$ and $H^m(\Omega)$ and their seminorms are simply denoted by $\|\cdot\|_{m,p}$ and $\|\cdot\|_m \, (= \|\cdot\|_{m,2})$ and by $|\cdot|_{m,p}$ and $|\cdot|_m \, (= |\cdot|_{m,2})$, respectively.
The notations~$\|\cdot\|_{m,p}$, $|\cdot|_{m,p}$, $\|\cdot\|_m$ and $|\cdot|_m$ are employed not only for scalar-valued functions but also for vector- and matrix-valued ones.
We also denote the norm on $H^{-1}(\Omega)^2$ by $\|\cdot\|_{-1}$.
For $t_0$ and $t_1\in\mathbb{R}$ we introduce the function space,
\begin{align*}
Z^m(t_0, t_1) \defeq \bigl\{ \psi \in H^j(t_0, t_1; H^{m-j}(\Omega));~j=0,\ldots,m,\ \|\psi\|_{Z^m(t_0, t_1)} < \infty \bigr\}
\end{align*}
with the norm
\begin{align*}
\|\psi\|_{Z^m(t_0, t_1)} \defeq \biggl\{ \sum_{j=0}^m \|\psi\|_{H^j(t_0,t_1; H^{m-j}(\Omega))}^2 \biggr\}^{1/2},
\end{align*}
and set $Z^m \defeq Z^m(0, T)$.
We often omit $[0,T]$, $\Omega$, and the superscripts~$2$ and~$2\times 2$ for the vector and the matrix if there is no confusion, e.g., we shall write $C(L^\infty)$ in place of $C([0,T]; L^\infty(\Omega)^{2\times 2})$.
For square matrices $\mathbf{A}$ and $\mathbf{B} \in \mathbb{R}^{2\times 2}$ we use the notation $\mathbf{A}:\mathbf{B} = \sum_{i,j} A_{ij} B_{ij}$.
\par
We consider the system of equations describing the unsteady motion of an incompressible viscoelastic fluid,
\begin{subequations}\label{model}
\begin{align}
\Dt{\ucko} - \di\bigl( 2\nu\D{\ucko} \bigr) + \nabla p & =\di [(\trC) \Ccko] + \fko & & \mbox{in}~\Omega \times (0,T),  \label{model_ucko}\\
\di \ucko &= 0 & & \mbox{in}~\Omega \times (0,T), \\
\Dt{\Ccko} - \varepsilon\Delta\Ccko =  (\nabla\ucko)\Ccko & + \Ccko(\nabla\ucko)^T - \(\trC\)^2 \Ccko   + (\trC) \Icko + \Fko & & \mbox{in}~\Omega \times (0,T),
\label{model_Ccko}\\
\ucko &= \mathbf{0}, \quad \varepsilon \pd{\Ccko}{\nko} = \mathbf{0}, & & \mbox{on}~\Gamma \times (0,T),
\label{model_bc}\\
\ucko &= \ucko^0,\quad \Ccko =   \Ccko^0,
 & & \mbox{in}~\Omega,\ \mbox{at}\ t=0,
\label{model_ic}
\end{align}
\end{subequations}
where $\(\ucko, p, \Ccko\): \Omega\times (0,T) \rightarrow \mathbb{R}^2 \times \mathbb{R}\times \mathbb{R}^{2\times 2}_{sym}$ are the unknown velocity, pressure and conformation tensor, $\nu >0$ is a fluid viscosity, $\varepsilon \in [0, 1]$ is an elastic stress viscosity,
$(\fko, \Fko): \Omega \times (0,T) \rightarrow \mathbb{R}^2 \times \mathbb{R}^{2\times 2}$ is a pair of given external forces,
$\nabla \ucko$ is the (matrix-valued) velocity gradient defined by $(\nabla \ucko)_{ij} \defeq \pz u_i/\pz x_j~(i, j=1,2)$, $\D{\ucko} \defeq (1/2) [\nabla \ucko + (\nabla \ucko)^T]$ is the symmetric part of the velocity gradient,
$\Icko$~is the identity matrix,
$\nko:\Gamma\to\mathbb{R}^2$~is the outward unit normal,
$(\ucko^0, \Ccko^0 ) : \Omega\to \mathbb{R}^2 \times \mathbb{R}^{2 \times 2}_{sym}$ is a pair of given initial functions,
and $\textnormal{D}/\textnormal{D}t$ is the material derivative defined by
\begin{align*}
\Dt{\ } \defeq \pd{\ }{t} + \wcko\cdot\nabla,
\end{align*}
where $\wcko:\Omega\times (0,T) \rightarrow \mathbb{R}^2$ is a given velocity.
\begin{remark}
(i)~In this paper we pay attention to the dependency on~$\varepsilon$ to include the degenerate case~$\varepsilon = 0$.
The upper bound~$1$ of $\varepsilon$ is not essential but replaced by any positive constant~$\varepsilon_0$, i.e., $\varepsilon \in [0,\varepsilon_0]$.
The upper bound is needed in choosing the constants~$h_0$, $\Delta t_0$ and $c_\dagger$ independent of~$\varepsilon$ in Theorem~\ref{thm:error_estimates} below, where it is used for the estimate~\eqref{ieq:R3} in Lemma~\ref{lem:estimates_r_R}.
\smallskip\\
(ii)~When $\varepsilon>0$, under regularity condition on~$\wcko$ the global existence of a weak solution of~\eqref{weak_formulation} below can be proved in a similar way to the fully nonlinear case~\cite{LukMizNec-2015}.
\smallskip\\
(iii)~When $\varepsilon=0$,  there is neither the diffusion term in~\eqref{model_Ccko} nor the boundary condition on~$\Ccko$ in~\eqref{model_bc}.
Because of the loss of the ellipticity, $\Ccko(t)$ does not belong to~$H^1(\Omega)^{2\times 2}$ in general.
If there exists a solution satisfying Hypothesis~\ref{hyp:regularity} below, then we can show the convergence of the finite element solution to the exact one in Theorem~\ref{thm:error_estimates}.
\end{remark}
\par
We formulate an assumption for the given velocity~$\wcko$.
\begin{hypo}\label{hyp:w}
The function $\wcko$ satisfies $\wcko \in C([0,T];W^{1,\infty}_0(\Omega)^2).$
\end{hypo}
\par
Let $V \defeq H_0^1(\Omega)^2$, $Q \defeq L_0^2(\Omega)$ and $W \defeq H^1_{sym}(\Omega)$.
We define the bilinear forms $a_u$ on $V \times V,$  $b$ on $V \times Q,$ $\mathcal{A}$ on $(V \times Q)\times(V \times Q)$ and $a_c$ on $W \times W$ by
\begin{align*}
a_u\(\ucko,\vcko\) & \defeq  2 \bigl( \D{\ucko}, \D{\vcko} \bigr), & b(\ucko,q) & \defeq - (\di \ucko, q), & \mathcal{A}\bigl( (\ucko,p), (\vcko,q) \bigr) & \defeq \nu a_u\( \ucko,\vcko \) + b(\ucko,q) + b(\vcko,p), \\
a_c\(\Ccko,\Dcko\) & \defeq (\nabla\Ccko, \nabla\Dcko),
\end{align*}
respectively.
We present the weak formulation of the problem~\eqref{model};
find $(\ucko,p,\Ccko): (0,T) \rightarrow V \times Q \times W$ such that for $t \in (0,T)$
\begin{subequations}\label{weak_formulation}
\begin{align}
\biggl( \Dt{\ucko}(t),\vcko \biggr) & + \mathcal{A}\bigl( (\ucko,p)(t), (\vcko,q) \bigr) = -  \(\trC(t)\,\Ccko(t),\nabla\vcko\) + \(\fko(t),\vcko\),
\label{weak_formulation_ucko}\\
\biggl( \Dt{\Ccko}(t), \Dcko \biggr) & + \varepsilon a_c \bigl( \Ccko(t),\Dcko \bigr) = 2\bigl( (\nabla\ucko(t)) \Ccko(t), \Dcko \bigr) - \bigl( (\trC(t))^2 \Ccko(t),\Dcko \bigr) +\(\trC(t)\Icko,\Dcko\) + \(\Fko(t),\Dcko\),
\label{weak_formulation_Ccko} \\
&
\qquad\qquad\qquad\qquad\qquad\qquad\qquad\qquad\qquad\qquad\qquad\qquad\qquad\qquad
\forall (\vcko,q,\Dcko) \in V\times Q \times W, \notag
\end{align}
\end{subequations}
with $( \ucko(0),  \Ccko(0) ) =( \ucko^0 , \Ccko^0)$.
%
%
%
%
%
%
%
%
%
\section{A nonlinear stabilized {Lagrange}--{Galerkin} scheme}\label{sec:scheme}
The aim of this section is to present a nonlinear stabilized {Lagrange}--{Galerkin} scheme for~\eqref{model}.
\par
Let $\Delta t$ be a time increment, $N_T \defeq \lfloor T/\Delta t \rfloor$ the total number of time steps and $t^n \defeq n \Delta t$ for $n=0,\ldots,N_T$.
Let $\gcko$ be a function defined in $\Omega\times (0,T)$ and $\gcko^n \defeq \gcko(\cdot,t^n)$.
For the approximation of the material derivative we employ the first-order characteristics method,
\begin{align}\label{approx_matder}
\Dt{\gcko}(x,t^n) = \frac{\gcko^n(x) - \( \gcko^{n-1} \circ X_1^n\) (x)}{\Delta t} + O(\Delta t),
\end{align}
where $X_1^n:\Omega \to \mathbb{R}^2$ is a mapping defined by
\[
X_1^n(x) \defeq x-\wcko^n(x)\Delta t,
\]
and the symbol~$\circ$ means the composition of functions,
\[
(\gcko^{n-1}\circ X_1^n)(x) \defeq \gcko^{n-1} ( X_1^n(x) ).
\]
For the details on deriving the approximation \eqref{approx_matder} of $\textnormal{D} \gcko/ \textnormal{Dt},$ see, e.g.,~\cite{NT-2015-JSC}.
The point $X_1^n(x)$ is called the upwind point of~$x$ with respect to $\wcko^n$.
The next proposition, which is a direct consequence of \cite{RuiTab-2002} and \cite{TabUch-2015-NS}, presents sufficient conditions to ensure that all upwind points defined by $X_1^n$ are in $\Omega$ and that its Jacobian~$J^n \defeq \det ( \pz X_1^n / \pz x )$ is around $1$.
\begin{prop}\label{prop:RT_TU}
Suppose Hypothesis~\ref{hyp:w} holds.
Then, we have the following for $n \in \{0,\ldots,N_T\}$.
\smallskip\\
(i)~Under the condition~$\Delta t |\wcko|_{C(W^{1,\infty})} < 1$, $X_1^n: \Omega \to \Omega$ is bijective.
\smallskip\\
(ii)~Furthermore, under the condition
\begin{align}
\Delta t |\wcko|_{C(W^{1,\infty})} \le 1/4,
\label{cond:dt_w_Jacobian}
\end{align}
the estimate $1/2 \le J^n \le 3/2$ holds.
\end{prop}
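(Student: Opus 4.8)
The statement is essentially a quantitative implicit-function/fixed-point argument about the flow map $X_1^n(x) = x - \Delta t\,\wcko^n(x)$, and the plan is to reduce both parts to elementary estimates on $\wcko^n$ that follow from Hypothesis~\ref{hyp:w}. For part~(i), the key observation is that $X_1^n$ is a perturbation of the identity by the map $x \mapsto \Delta t\,\wcko^n(x)$, whose Lipschitz constant on $\Omega$ is at most $\Delta t\,|\wcko^n|_{1,\infty} \le \Delta t\,|\wcko|_{C(W^{1,\infty})} < 1$. Since $\wcko \in C([0,T];W^{1,\infty}_0(\Omega)^2)$, the boundary condition $\wcko^n|_\Gamma = \mathbf{0}$ forces $X_1^n$ to fix $\Gamma$ pointwise, and combined with the contraction property of the perturbation this yields that $X_1^n$ maps $\barO$ into $\barO$; injectivity is immediate from
\[
|X_1^n(x) - X_1^n(y)| \ge |x-y| - \Delta t\,|\wcko^n(x) - \wcko^n(y)| \ge (1 - \Delta t\,|\wcko|_{C(W^{1,\infty})})\,|x-y| > 0
\quad\text{for } x \ne y,
\]
and surjectivity onto $\Omega$ follows by a degree/topological argument (a contractive perturbation of the identity that fixes the boundary is onto). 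This is the part I would write first, since it is the cleanest; the reference to \cite{RuiTab-2002} presumably supplies the precise topological lemma so I would cite it rather than reprove it.

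For part~(ii), the Jacobian matrix is $\partial X_1^n/\partial x = \Icko - \Delta t\,\nabla\wcko^n$, so I would estimate $\det(\Icko - \Delta t\,\nabla\wcko^n)$ directly. In two dimensions, writing $\mathbf{B} \defeq \Delta t\,\nabla\wcko^n$, one has $\det(\Icko - \mathbf{B}) = 1 - \tr\mathbf{B} + \det\mathbf{B}$, and each term is controlled by $|\mathbf{B}|_{\infty} \le \Delta t\,|\wcko|_{C(W^{1,\infty})} \le 1/4$: specifically $|\tr\mathbf{B}| \le 2\cdot(1/4) = 1/2$ and $|\det\mathbf{B}| \le$ (a small constant)$\cdot(1/4)^2$, so that $|{\det(\Icko-\mathbf{B})} - 1| \le 1/2$ once the constant in condition~\eqref{cond:dt_w_Jacobian} is chosen (this is exactly why the sharp bound $1/4$ rather than merely $<1$ is imposed), giving $1/2 \le J^n \le 3/2$. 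Here I would follow the bookkeeping of \cite{TabUch-2015-NS} for the exact matrix-norm constants so that the $1/4$ threshold comes out correctly.

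The main obstacle — such as it is — is not analytic depth but getting the constants in the two cited lemmas to line up: one must verify that the constant hidden in ``$|\det \mathbf{B}|\le c|\mathbf{B}|_\infty^2$'' together with the $\tr$ bound genuinely closes at the threshold $1/4$, and that the surjectivity argument in part~(i) uses only the strict inequality $\Delta t\,|\wcko|_{C(W^{1,\infty})} < 1$ and the homogeneous boundary data (so that no additional smallness is smuggled in). Since the proposition is stated as ``a direct consequence of \cite{RuiTab-2002} and \cite{TabUch-2015-NS},'' I expect the intended proof is simply to quote those two results with $\wcko$ in the role of the convecting field and to remark that Hypothesis~\ref{hyp:w} supplies the required $C(W^{1,\infty})$ regularity and the zero boundary trace; the substantive content is the uniform-in-$n$ bound $|\wcko^n|_{1,\infty}\le|\wcko|_{C(W^{1,\infty})}$, which is exactly what the $C([0,T];\cdot)$ norm gives.
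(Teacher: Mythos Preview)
Your proposal is correct and in fact goes beyond what the paper does: the paper gives no proof at all, merely introducing the proposition as ``a direct consequence of \cite{RuiTab-2002} and \cite{TabUch-2015-NS}.'' Your final paragraph anticipates this exactly, and the sketch you provide for parts~(i) and~(ii) is the standard argument underlying those references.
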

\par
For the sake of simplicity we suppose that $\Omega$ is a polygonal domain.
Let $\trian=\{K\}$ be a triangulation of $\bar{\Omega} \ (= \bigcup_{K\in\mathcal{T}_h} K )$, $h_K$ the diameter of $K \in \trian$ and $h \defeq \max_{K\in\trian}h_K$ the maximum element size.
We consider a regular family of subdivisions $\{\trian\}_{h\downarrow 0}$ satisfying the inverse assumption~\cite{Cia-1978}, i.e., there exists a positive constant $\alpha_0$ independent of $h$ such that
\begin{align*}
\frac{h}{h_K}\leq \alpha_0, \quad \forall K \in \trian, \ \forall h.
\end{align*}
We define the discrete function spaces $X_h$, $V_h$, $M_h$, $Q_h$ and $W_h$ by
\begin{align*}
X_h &\defeq  \left\{ \vcko_h \in C(\bar{\Omega})^2 ; \ \vcko_{h|K} \in P_1(K)^2, \forall K \in \trian \right\}, & V_h &\defeq  X_h \cap V, &\\
M_h &\defeq  \left\{ q_h \in C(\bar{\Omega}) ; \ q_{h|K} \in P_1(K), \forall K \in \trian \right\}, & Q_h &\defeq  M_h \cap Q,&\\
W_h &\defeq  \left\{ \Dcko_h \in C_{sym}(\bar{\Omega}); \ \Dcko_{h|K} \in P_1(K)^{2\times 2}, \forall K \in \trian \right\},
\end{align*}
respectively, where $P_1(K)$ is the polynomial space of linear functions on~$K\in\trian$.
\par
Let $\delta_0$ be a small positive constant fixed arbitrarily and $(\cdot, \cdot)_K $ the $L^2(K)^2$ inner product.
We define the bilinear forms $\mathcal{A}_h$ on $(V \times H^1(\Omega)) \times (V \times H^1(\Omega))$ and $\mathcal{S}_h$ on $H^1(\Omega) \times H^1(\Omega)$ by
\begin{align*}
\mathcal{A}_h\( (\ucko,p),(\vcko,q) \) &\defeq \nu a_u\( \ucko,\vcko \) + b(\ucko,q) + b(\vcko,p) - \mathcal{S}_h(p,q), &
\mathcal{S}_h(p,q) & \defeq \delta_0\sum_{K\in\trian}h_K^2(\nabla p,\nabla q)_K.
\end{align*}
For $\Dcko \in \mathbb{R}^{2 \times 2}_{sym}$ let $\Dcko^{\#} \in \mathbb{R}^{2 \times 2}_{sym}$ be the adjugate matrix of~$\Dcko$ defined by
\begin{align*}
\Dcko^{\#} \defeq \left(
\begin{array}{rr}
D_{22} & -D_{12} \\
-D_{12} & D_{11} \\
\end{array} \right).
\end{align*}
\par
Let $(\fko_h, \Fko_h) \defeq (\{ \fko_h^n\}_{n=1}^{N_T}, \{ \Fko_h^n\}_{n=1}^{N_T} ) \subset L^2(\Omega)^2 \times L^2(\Omega)^{2\times 2}$ and  $(\ucko_h^0,\Ccko_h^0) \in V_h \times W_h$ be given.
A nonlinear stabilized {\rm {Lagrange}--{Galerkin}} scheme for~\eqref{model} is to find $(\ucko_h, p_h, \Ccko_h) \defeq \{(\au,\ap,\aC)\}_{n=1}^{N_T}$ $\subset V_h \times Q_h \times W_h$ such that, for $n=1,\ldots ,N_T$,
\begin{subequations}\label{nonlin_scheme}
\begin{align}
\( \frac{\au-\aou\circ X_1^n}{\Delta t}, \vcko_h \) + \mathcal{A}_h \bigl( ( \au,\ap ), (\vcko_h,q_h) \bigr) & = - \bigl( (\tr\Ccko_h^n)\aC,\nabla \vcko_h \bigr) + ( \af,\vcko_h ),
\label{nonlin_scheme_up}\\
\(\frac{\aC-\aoC \circ X_1^n}{\Delta t},\Dcko_h\) + \varepsilon a_{c}\(\aC,\Dcko_h\) & = 2 \bigl( (\nabla\au)\aC, \Dcko_h \bigr) + \bigl( \di \au (\aC)^{\#},\Dcko_h \bigr) - \bigl( (\tr\Ccko_h^n)^2 \aC, \Dcko_h \bigr) \notag\\
& \quad + \bigl( (\tr\Ccko_h^n)\Icko,\Dcko_h \bigr) + ( \aF, \Dcko_h ),
\label{nonlin_scheme_c} \\
& \qquad\qquad\qquad\qquad\qquad\qquad\quad
 \forall (\vcko_h,q_h,\Dcko_h) \in V_h\times Q_h \times W_h. \notag
\end{align}
\end{subequations}
In  Remark~\ref{rmk:vanish_nonlin} below we show that an additional term, the second term on the right-hand side of~\eqref{nonlin_scheme_c}, is added in order to derive a desired energy inequality.
%
%
%
%
%
%
%
\section{The main result}\label{sec:main_result}
In this section we present the main result on error estimates with the optimal convergence order of scheme~\eqref{nonlin_scheme}.
\par
We use $c$ to represent a generic positive constant independent of the discretization parameters $h$ and $\Delta t$.
We also use constants $c_w$ and $c_s$ independent of $h$ and $\Delta t$ but dependent on~$\wcko$ and the solution~$(\ucko, p, \Ccko)$ of~\eqref{weak_formulation}, respectively, and $c_s$ often depends on~$\wcko$ additionally.
$c$, $c_w$ and~$c_s$ may be dependent on~$\nu$ but are independent of~$\varepsilon$.
The symbol ``$\prime$ (prime)'' is sometimes used in order to distinguish two constants, e.g.,~$c_s$ and~$c_s^\prime$, from each other.
We use the following notation for the norms and seminorms, $\n{V}{\cdot} = \n{V_h}{\cdot} \defeq \n{1}{\cdot}$, $\n{Q}{\cdot} = \n{Q_h}{\cdot} \defeq \n{0}{\cdot}$,
\begin{align*}
\n{Z^2(t_0,t_1)}{(\ucko,\Ccko)} & \defeq \Bigl\{ \n{Z^2(t_0,t_1)}{\ucko}^2 + \n{Z^2(t_0,t_1)}{\Ccko}^2 \Bigr\}^{1/2}, &
\n{\ell^{\infty}(X)}{\ucko} & \defeq \max_{n=0,\ldots,N_T} \n{X}{\ucko^n}, \\
\n{\ell^2(X)}{\ucko} & \defeq \biggl\{ \Delta t \sum_{n=1}^{N_T} \n{X}{\ucko^n}^2 \biggr\}^{1/2}, &
|\ucko|_{\ell^2(X)} & \defeq \biggl\{ \Delta t \sum_{n=1}^{N_T} |\ucko^n|_X^2 \biggr\}^{1/2}, \\
|p|_h & \defeq \biggl\{ \sum_{K \in \trian} h_K^2 ( \nabla p, \nabla p )_K \biggr\}^{1/2}, &
|p|_{\ell^2(|.|_h)} & \defeq \biggl\{ \Delta t \sum_{n=1}^{N_T} |p^n|_h^2 \biggr\}^{1/2},
\end{align*}
for $X=L^2(\Omega)$ or $H^1(\Omega)$.
$\ol{D}_{\Delta t}$ is the backward difference operator defined by $\ol{D}_{\Delta t} u^n \defeq (u^n - u^{n-1})/\Delta t$.
\par
The existence of the solution of scheme~\eqref{nonlin_scheme} is guaranteed by the next proposition whose proof is given in the next section.
\begin{prop}[existence]\label{prop:existence}
Suppose Hypothesis~\ref{hyp:w} holds.
Then for any~$h>0$ and $\Delta t \in (0, 1/2)$ satisfying~\eqref{cond:dt_w_Jacobian}, there exists a solution $(\ucko_h,p_h,\Ccko_h)  \subset V_h \times Q_h \times W_h$ of scheme~\eqref{nonlin_scheme}.
\end{prop}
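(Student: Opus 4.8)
\emph{Proof strategy.}
The plan is to argue by induction on the time level~$n$ and, at each step, to realize a solution of the finite-dimensional nonlinear system~\eqref{nonlin_scheme} as a zero of a suitable map, via the classical consequence of {Brouwer}'s fixed-point theorem: if $\Phi$ maps a finite-dimensional inner-product space $X_h$ continuously into itself and $(\Phi(\xi),\xi)_{X_h}\ge 0$ for all $\xi$ with $\|\xi\|_{X_h}=\rho$, then $\Phi$ vanishes somewhere in the closed ball of radius $\rho$. Assume $(\aou,\aoC)\in V_h\times W_h$ is available from the previous step (for $n=1$ it is the prescribed initial data); since $\Delta t$ satisfies~\eqref{cond:dt_w_Jacobian}, Proposition~\ref{prop:RT_TU} makes $X_1^n\colon\Omega\to\Omega$ bijective with $1/2\le J^n\le 3/2$, so the upwind compositions $\aou\circ X_1^n$, $\aoC\circ X_1^n$ make sense and the substitution $y=X_1^n(x)$ yields $\|\gcko\circ X_1^n\|_0\le\sqrt2\,\|\gcko\|_0$. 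On $X_h\defeq V_h\times Q_h\times W_h$ I would then define $\Phi_h$ by
\[
\bigl(\Phi_h(\au,\ap,\aC),(\vcko_h,q_h,\Dcko_h)\bigr)_{X_h}
= 2R_1^n(\vcko_h)-2R_2^n(q_h)+R_3^n(\Dcko_h)
\qquad\forall(\vcko_h,q_h,\Dcko_h)\in X_h,
\]
where $R_1^n(\vcko_h)$ and $R_2^n(q_h)$ denote the (left minus right) residuals of the $\vcko_h$- and $q_h$-parts of~\eqref{nonlin_scheme_up}, and $R_3^n(\Dcko_h)$ that of~\eqref{nonlin_scheme_c}. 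Each residual is a polynomial in the finitely many nodal values of $(\au,\ap,\aC)$, so $\Phi_h$ is continuous, and $\Phi_h(\au,\ap,\aC)=0$ is equivalent to $(\au,\ap,\aC)$ solving~\eqref{nonlin_scheme}.

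The core of the argument is the lower bound for $(\Phi_h(\xi),\xi)_{X_h}$ obtained by taking the test triple $(\vcko_h,q_h,\Dcko_h)=\xi=(\au,\ap,\aC)$. The pressure-coupling terms $b(\au,\ap)$ cancel between $2R_1^n(\au)$ and $-2R_2^n(\ap)$, leaving the nonnegative stabilization contribution $2\delta_0|\ap|_h^2$; here I would use that $|\cdot|_h$ is in fact a norm on the finite-dimensional space $Q_h$ (if $|\ap|_h=0$ then $\ap$ has a vanishing gradient on every element, hence is constant by continuity, hence $\ap=0$ because $\ap\in L^2_0(\Omega)$). The weighting by~$2$ in front of the flow residuals is chosen exactly so that the cubic terms cancel: $2R_1^n(\au)$ contributes $+2\bigl((\tr\aC)\aC,\nabla\au\bigr)$, whereas $R_3^n(\aC)$ contributes $-2\bigl((\nabla\au)\aC,\aC\bigr)-\bigl(\di\au\,(\aC)^{\#},\aC\bigr)$, which by the two-dimensional structural identity of Lemma~\ref{lem:vanish_nonlin} equals $-2\bigl((\tr\aC)\aC,\nabla\au\bigr)$ — this is precisely the purpose of the extra term $\bigl(\di\au\,(\aC)^{\#},\Dcko_h\bigr)$ in~\eqref{nonlin_scheme_c}. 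What is then left consists of the nonnegative quantities $\tfrac2{\Delta t}\|\au\|_0^2$, $\tfrac1{\Delta t}\|\aC\|_0^2$, $4\nu\|\D{\au}\|_0^2$ (which controls $\|\au\|_1^2$ from below by {Korn}'s and {Poincar\'e}'s inequalities), $2\delta_0|\ap|_h^2$, $\varepsilon\|\nabla\aC\|_0^2$ and $\bigl((\tr\aC)^2\aC,\aC\bigr)$, minus the time shifts proportional to $(\aou\circ X_1^n,\au)$ and $(\aoC\circ X_1^n,\aC)$, the zeroth-order term $\bigl((\tr\aC)\Icko,\aC\bigr)$ (bounded by $2\|\aC\|_0^2$), and the data terms $(\af,\au)$, $(\aF,\aC)$. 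Applying {Young}'s inequality with a sufficiently small parameter, and using $\Delta t<1/2$ to absorb $\bigl((\tr\aC)\Icko,\aC\bigr)$ and (part of) the shifts into $\tfrac1{\Delta t}\|\aC\|_0^2$, one obtains
\[
(\Phi_h(\xi),\xi)_{X_h}\;\ge\;c_1\bigl(\|\au\|_1^2+\|\ap\|_0^2+\|\aC\|_0^2\bigr)-c_2
\]
with $c_1,c_2>0$ depending only on $h,\Delta t,\nu,\delta_0,\varepsilon,\Omega,\wcko$ and on the data $(\aou,\aoC,\af,\aF)$, but not on $(\au,\ap,\aC)$.

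Consequently, for $\rho$ large enough $(\Phi_h(\xi),\xi)_{X_h}\ge0$ on $\|\xi\|_{X_h}=\rho$, and {Brouwer}'s corollary yields a zero of $\Phi_h$, i.e.\ a solution $(\au,\ap,\aC)$ of~\eqref{nonlin_scheme} at time level~$n$; completing the induction proves the proposition, and no relation between $h$ and $\Delta t$ has been used. I expect the only genuinely delicate step to be the cubic velocity--conformation terms: their cancellation hinges on the weighting by~$2$ of the flow residuals together with the adjugate term $\bigl(\di\au\,(\aC)^{\#},\Dcko_h\bigr)$ and the planar identity of Lemma~\ref{lem:vanish_nonlin}; all other estimates are routine applications of {Young}'s inequality and of the equivalence of norms on the finite-dimensional spaces $V_h$, $Q_h$, $W_h$.
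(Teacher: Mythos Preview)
Your proposal is correct and follows essentially the same route as the paper's proof: both apply the {Brouwer}-type lemma (Lemma~\ref{lem:Brouwer}) to a map whose zeros are the solutions of~\eqref{nonlin_scheme}, test with the unknown itself, use Lemma~\ref{lem:vanish_nonlin} to annihilate the cubic velocity--conformation coupling, and exploit $\Delta t<1/2$ to absorb $\|\tr\aC\|_0^2$ into $\tfrac{1}{\Delta t}\|\aC\|_0^2$. The only cosmetic differences are that the paper weights the flow residual by~$1$ and the conformation residual by~$1/2$ (your $2$ and $1$ are the same ratio), and that the paper equips $X=V_h\times Q_h\times W_h$ with a tailored inner product so that the coercivity estimate reads exactly $\tfrac12(\|\xi\|_X^2-\beta_\ast^2)$, whereas you rely on the equivalence of norms on the finite-dimensional space~$Q_h$ to pass from $|\ap|_h$ to $\|\ap\|_0$; both packagings work.
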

%
\par
We state the main result after preparing a projection and a hypothesis.
\begin{defin}[{Stokes} projection]
For $(\ucko, p) \in V \times Q$ we define the {\rm Stokes} projection $(\SPu, \SPp) \in V_h \times Q_h$ of $(\ucko, p)$ by
\begin{align}
\mathcal{A}_h\((\SPu,\SPp),(\vcko_h, q_h)\) = \mathcal{A}\((\ucko,p),(\vcko_h, q_h)\), \quad \forall (\vcko_h, q_h) \in V_h \times Q_h.
\label{Stokes_projection}
\end{align}
\end{defin}
The {\rm Stokes} projection derives an operator $\Pi_h^{\rm S}: V \times Q \to V_h\times Q_h$ defined by $\Pi_h^{\rm S} (\ucko, p) := (\hat{\ucko}_h, \hat{p}_h)$.
The first component~$\hat{\ucko}_h$ of $\Pi_h^{\rm S} (\ucko, p)$ is denoted by $[\Pi_h^{\rm S} (\ucko, p)]_1$.
Let $\Pi_h: L^2(\Omega) \to M_h$ be the {Cl\'{e}ment} interpolation operator~\cite{Cle-1975}.
The Cl\'{e}ment operators on $L^2(\Omega)^2$ and $L^2(\Omega)^{2\times 2}$ are denoted by the same symbol~$\Pi_h$.
%
\begin{remark}\label{rmk:no_poisson}
The {\rm Cl\'{e}ment} operator is defined for functions from $L^2(\Omega)$.
When a function belongs to~$C(\bar{\Omega})$, we can replace the {\rm Cl\'{e}ment} operator by the {\rm Lagrange} operator~$\Pi_h^L: C(\bar{\Omega}) \to M_h$.
\end{remark}
\begin{hypo}\label{hyp:regularity}
The solution $(\ucko, p, \Ccko) $ of~\eqref{weak_formulation} satisfies
$\ucko \in
Z^2(0,T)^2 \cap H^1(0,T;V \cap H^2(\Omega)^2)
\cap C([0,T]; W^{1,\infty}(\Omega)^2)$,
$p \in H^1(0,T; Q \cap H^1(\Omega))$ and
\[
\Ccko \in
\left\{
\begin{aligned}
& Z^2(0,T)^{2\times 2} \cap L^2(0,T;W) \cap C([0,T]; H^2(\Omega)^{2\times 2}) && (\varepsilon > 0),\\
& Z^2(0,T)^{2\times 2} \cap L^2(0,T;W) \cap C([0,T]; L^\infty(\Omega)^{2\times 2}) && (\varepsilon = 0).
\end{aligned}
\right.
\]
\end{hypo}
We now impose the conditions
\begin{align}
(\ucko_h^0,\Ccko_h^0) = ([\Pi_h^{\rm S} (\ucko^0, 0)]_1, \Pi_h\Ccko^0), \quad (\fko_h, \Fko_h)=(\fko,\Fko).
\label{cond:if}
\end{align}
%
\begin{theo}[error estimates]\label{thm:error_estimates}
Suppose Hypotheses~\ref{hyp:w} and~\ref{hyp:regularity} hold.
Then, there exist positive constants~$h_0$, $\Delta t_0$ and $c_\dagger$ independent of~$\varepsilon$ such that, for any pair $(h, \Delta t)$ satisfying
\begin{align}
 h\in (0,h_0], \quad \Delta t\in (0,\Delta t_0],
\label{cond:ht}
\end{align}
and any solution~$(\ucko_h, p_h, \Ccko_h)$ of scheme~\eqref{nonlin_scheme} with \eqref{cond:if}, it holds that
\begin{align}
& \|\ucko_h-\ucko\|_{\ell^{\infty}(L^2)},  \ \sqrt{\nu} \|\ucko_h-\ucko\|_{\ell^2(H^1)}, \  |p_h-p|_{\ell^2(|.|_h)}, \qquad\qquad\qquad\qquad \notag\\
& \|\Ccko_h-\Ccko\|_{\ell^{\infty}(L^2)}, \ \sqrt{\varepsilon} |\Ccko_h-\Ccko|_{\ell^2(H^1)}, \ \bigl\| \tr(\Ccko_h-\Ccko) (\Ccko_h-\Ccko) \bigr\|_{\ell^2(L^2)} \le c_{\dagger} (h +\Delta t).
\label{ieq:error_estimates}
\end{align}
\end{theo}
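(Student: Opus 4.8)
The plan is to carry out a standard but technically involved energy-type error analysis, testing the error equations with the errors themselves and applying a discrete Gronwall argument. First I would introduce the error decomposition: write $\ucko_h^n - \ucko^n = (\ucko_h^n - \hat{\ucko}_h^n) + (\hat{\ucko}_h^n - \ucko^n) \eqdef \etacko_h^n + (\text{projection error})$, and similarly split the pressure error through the Stokes projection and the conformation error through the Cl\'ement interpolation, $\Ccko_h^n - \Ccko^n = (\Ccko_h^n - \Pi_h\Ccko^n) + (\Pi_h\Ccko^n - \Ccko^n) \eqdef \Ecko_h^n + (\text{projection error})$. Standard approximation properties of the Stokes projection and the Cl\'ement operator, together with Hypothesis~\ref{hyp:regularity}, control the projection errors at order $O(h)$ in the relevant norms (including $C(L^\infty)$ bounds via inverse estimates when $d=2$, which is where two-dimensionality enters). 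The real work is estimating the discrete part $(\etacko_h, \Ecko_h)$.

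\textbf{Key steps.} Subtracting the weak formulation~\eqref{weak_formulation} (evaluated at $t^n$, with the material derivative rewritten via~\eqref{approx_matder}) from the scheme~\eqref{nonlin_scheme}, I obtain error equations for $(\etacko_h^n, \Ecko_h^n)$ whose right-hand sides contain: (a) the characteristics truncation error of order $O(\Delta t)$, controlled by the Rui--Tabata estimates (Proposition~\ref{prop:RT_TU} and the composition-operator bounds cited from \cite{RuiTab-2002,TabUch-2015-NS}); (b) the projection/interpolation errors already handled; and (c) the genuinely nonlinear terms from the elastic stress and from the $\Ccko$-equation — the products $(\tr\Ccko_h^n)\aC$, $(\nabla\au)\aC$, $(\tr\Ccko_h^n)^2\aC$, and crucially the added term $(\di\au(\aC)^\#,\Dcko_h)$. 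I would then take $\vcko_h = \etacko_h^n$, $q_h = $ (pressure error component), $\Dcko_h = \Ecko_h^n$ (or possibly $\Ecko_h^n$ scaled), add the two equations, and use the coercivity of $a_u$ on $V_h$, the stabilization term $\mathcal{S}_h$ controlling $|p_h^n|_h$, and the $\varepsilon a_c$ term controlling $\sqrt{\varepsilon}|\Ecko_h|_1$. For the $\ol D_{\Delta t}$ terms I would use the algebraic identity $(a^n - a^{n-1}, a^n) = \tfrac12(|a^n|^2 - |a^{n-1}|^2 + |a^n - a^{n-1}|^2)$ together with the Jacobian bound $1/2 \le J^n \le 3/2$ to pass from $a^{n-1}\circ X_1^n$ to $a^{n-1}$ at the cost of a $c\Delta t\|\wcko\|$ term. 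Summing over $n$ and absorbing the coercive terms on the left, everything reduces to bounding the nonlinear terms by $c\Delta t\sum_k(\|\etacko_h^k\|_0^2 + \|\Ecko_h^k\|_0^2) + c(h+\Delta t)^2$ plus the crucial dissipative contribution $\|\tr(\Ccko_h-\Ccko)(\Ccko_h-\Ccko)\|_{\ell^2(L^2)}^2$, which must survive on the left-hand side with a positive sign. Finally a discrete Gronwall inequality (needing $\Delta t_0$ small) closes the estimate, and the triangle inequality converts the bound on $(\etacko_h,\Ecko_h)$ into~\eqref{ieq:error_estimates}.

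\textbf{Main obstacle.} The hard part is the treatment of the cubic/quartic nonlinear terms $-((\tr\Ccko_h^n)^2\aC,\Ecko_h^n)$ and $2((\nabla\au)\aC,\Ecko_h^n)$ together with the elastic-stress term $-((\tr\Ccko_h^n)\aC,\nabla\etacko_h^n)$ in the velocity equation: these are super-quadratic in the unknowns, so naive Sobolev embedding does not give an estimate linear in the error norms. The key is Lemma~\ref{lem:vanish_nonlin}, which exploits the added term $(\di\au(\aC)^\#,\Dcko_h)$ to produce a cancellation/structural identity so that the "bad" part of the quadratic-in-$\Ccko$ nonlinearity is replaced by the controllable dissipative quantity $\|\tr(\cdot)(\cdot)\|_{L^2}$; this is precisely the two-dimensional phenomenon (the identity $\Dcko\Dcko^\# = (\det\Dcko)\Icko$ and the relation $\tr(\Dcko^\#) = \tr\Dcko$ are special to $2\times2$) that the introduction flags as failing in 3D. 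Beyond that, one must carefully interleave the $C(L^\infty)$-type a priori control of $\Ccko_h^n$ — which itself has to be bootstrapped from the error estimate via an inductive argument on $n$, using inverse inequalities valid only for $d=2$ — so that the constants $h_0,\Delta t_0,c_\dagger$ can be chosen independent of $\varepsilon$ (this last point needs the $\varepsilon\le1$ bound, as noted in the Remark, for the term~\eqref{ieq:R3}). Keeping every constant uniform in $\varepsilon\in[0,1]$ so that the degenerate case $\varepsilon=0$ is included, while not relying on any positivity of $\Ccko_h$, is the delicate bookkeeping that makes the proof nontrivial.
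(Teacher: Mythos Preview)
Your overall strategy---error decomposition via the Stokes projection and Cl\'ement interpolation, testing with $(\ecko_h^n,-\epsilon_h^n,\tfrac{1}{2}\Ecko_h^n)$, using Lemma~\ref{lem:vanish_nonlin} for the cancellation, and closing by discrete Gronwall---matches the paper's proof. But there is one genuine gap and one misconception.

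\textbf{The gap: no bootstrapping is needed, and proposing one would spoil the theorem.} You write that one must ``interleave the $C(L^\infty)$-type a priori control of $\Ccko_h^n$ --- which itself has to be bootstrapped from the error estimate via an inductive argument on~$n$.'' The paper does \emph{not} do this, and it is essential that it does not: an inductive $L^\infty$ bound on $\Ccko_h^n$ via inverse inequalities (as in Lemma~\ref{lem:boundedness}) forces a coupling between $\Delta t$ and $h$ (either $\Delta t\le D(h)^{-2}$ or $\Delta t\le h$), which directly contradicts the theorem's claim that the estimate holds for \emph{any} pair $(h,\Delta t)$ with $h\le h_0$, $\Delta t\le \Delta t_0$ and no further relation. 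The paper avoids bootstrapping entirely by the way the nonlinear terms are split in the error equations~\eqref{eqns:error}: the pure-error trilinear terms $-\bigl((\tr\Ecko_h^n)\Ecko_h^n,\nabla\ecko_h^n\bigr)$, $\bigl((\nabla\ecko_h^n)\Ecko_h^n,\Ecko_h^n\bigr)$ and $\tfrac{1}{2}\bigl((\di\ecko_h^n)(\Ecko_h^n)^{\#},\Ecko_h^n\bigr)$ cancel exactly by Lemma~\ref{lem:vanish_nonlin}, while every remaining cross term (the $\rcko_{hi}^n$, $\Rcko_{hi}^n$ in Lemma~\ref{lem:estimates_r_R}) involves only $\check{\Ccko}_h^n=\Pi_h\Ccko^n$ or $\hat{\ucko}_h^n$ in $L^\infty$, and these are bounded by $\|\Ccko\|_{C(L^\infty)}$ (via~\eqref{ieq:Pi_h_0inf}) and by~\eqref{ieq:bound_u_hat_1inf}, i.e.\ by norms of the \emph{exact} solution alone. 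The cubic term $-(\tr\Ccko_h^n)^2\Ccko_h^n$ is not handled by Lemma~\ref{lem:vanish_nonlin} but by its sign: after the decomposition, $\Rcko_{h8}^n$ tested with $\tfrac{1}{2}\Ecko_h^n$ contributes $-\tfrac{3}{8}\|(\tr\Ecko_h^n)\Ecko_h^n\|_0^2$ (dissipative), and $\Rcko_{h9}^n$ contributes $+\tfrac{1}{8}\|(\tr\Ecko_h^n)\Ecko_h^n\|_0^2$, leaving a net $-\tfrac{1}{4}\|(\tr\Ecko_h^n)\Ecko_h^n\|_0^2$ on the left---this is where the sixth norm in~\eqref{ieq:error_estimates} comes from, and no $L^\infty$ control of $\Ccko_h^n$ is ever invoked.

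\textbf{The misconception about dimension.} You say two-dimensionality enters through inverse inequalities for $C(L^\infty)$ bounds. In the paper's proof of Theorem~\ref{thm:error_estimates}, two-dimensionality enters \emph{only} through Lemma~\ref{lem:vanish_nonlin} (the algebraic identity fails for $3\times 3$ matrices). The inverse inequality $\|\nabla\hat{\ucko}_h^n\|_{0,\infty}\le \alpha_{25}h^{-1}\|\cdot\|_1$ used in~\eqref{ieq:bound_u_hat_1inf} is dimension-independent. The $d=2$-specific logarithmic inverse inequality with $D(h)$ appears only later, in the uniqueness proof (Section~\ref{sec:uniqueness}), not in the error estimate.
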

%
\begin{remark}\label{rmk:error_estimates_eps_0}
(i)~The estimates~\eqref{ieq:error_estimates} hold even for $\varepsilon=0$.
Then, of course, the fifth term of the left-hand side of \eqref{ieq:error_estimates} vanishes.
\smallskip\\
(ii)~Here we do not need uniqueness of the solution of scheme~\eqref{nonlin_scheme}.
Uniqueness of the numerical solution will be  discussed later in Proposition~\ref{prop:uniqueness}.
\smallskip\\
(iii)~The positive definiteness of the exact and numerical solutions is not required for the above error estimates.
\end{remark}
%
%
%
%
%
\section{Proofs}\label{sec:proofs}
%
In what follows we prove Proposition~\ref{prop:existence} and Theorem~\ref{thm:error_estimates}.
%
%
%
%
%
\subsection{Preliminaries}
Let us list lemmas directly employed below in the proofs.
In the lemmas, $\alpha_i$, $i=1,\ldots,4$, are numerical constants.
They are independent of~$h$, $\Delta t$, $\nu$ and~$\varepsilon$ but may depend on~$\Omega$.
\begin{lemma}[ \cite{Nec-1967} ]\label{lem:Korn}
Let $\Omega$ be a bounded domain with a {\rm Lipschitz}-continuous boundary.
Then, the following inequalities hold.
\begin{align*}
\|\D{\vcko}\|_0 \le \|\vcko\|_1 \le \alpha_1 \|\D{\vcko}\|_0,\qquad \forall \vcko \in H^1_0(\Omega)^2.
\end{align*}
\end{lemma}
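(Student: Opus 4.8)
The plan is to handle the two inequalities separately: the left one is essentially free, while the right one is the genuine Korn estimate, which I would obtain from an elementary integration-by-parts identity followed by a density argument. For the first inequality $\n{0}{\D{\vcko}} \le \n{1}{\vcko}$, split $\nabla\vcko = \D{\vcko} + \mathbf{W}$ into its symmetric and skew-symmetric parts; since $\D{\vcko}:\mathbf{W} = 0$ pointwise, $|\D{\vcko}|^2 \le |\D{\vcko}|^2 + |\mathbf{W}|^2 = |\nabla\vcko|^2$, and integrating gives $\n{0}{\D{\vcko}} \le \n{0}{\nabla\vcko} \le \n{1}{\vcko}$.

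For the second inequality $\n{1}{\vcko} \le \alpha_1\n{0}{\D{\vcko}}$ I would first take $\vcko \in C_c^\infty(\Omega)^2$ and expand pointwise $4|\D{\vcko}|^2 = \sum_{i,j}(\partial_j v_i + \partial_i v_j)^2$, which gives $2|\D{\vcko}|^2 = |\nabla\vcko|^2 + \sum_{i,j}\partial_j v_i\,\partial_i v_j$. Integrating the cross term by parts twice, first in $x_j$ and then in $x_i$ — here the compact support makes every boundary term vanish and the mixed derivatives commute — yields $\int_\Omega \partial_j v_i\,\partial_i v_j\,dx = \int_\Omega \partial_i v_i\,\partial_j v_j\,dx$, hence after summing over $i,j$,
\[
2\,\n{0}{\D{\vcko}}^2 = \n{0}{\nabla\vcko}^2 + \n{0}{\di\vcko}^2 \;\ge\; \n{0}{\nabla\vcko}^2 .
\]
Thus $\n{0}{\nabla\vcko} \le \sqrt{2}\,\n{0}{\D{\vcko}}$ on $C_c^\infty(\Omega)^2$; since this space is dense in $H^1_0(\Omega)^2$ and both sides of the bound are continuous for the $H^1$-norm, it extends to every $\vcko \in H^1_0(\Omega)^2$.

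Finally, combining with the Poincar\'{e}--Friedrichs inequality $\n{0}{\vcko} \le c_\Omega\n{0}{\nabla\vcko}$ on $H^1_0(\Omega)^2$ (valid because $\Omega$ is bounded, with $c_\Omega$ depending only on $\Omega$), one gets $\n{1}{\vcko}^2 = \n{0}{\vcko}^2 + \n{0}{\nabla\vcko}^2 \le (1+c_\Omega^2)\,\n{0}{\nabla\vcko}^2 \le 2(1+c_\Omega^2)\,\n{0}{\D{\vcko}}^2$, i.e. the claim with $\alpha_1 = \sqrt{2(1+c_\Omega^2)}$, a constant depending only on $\Omega$ as required.

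There is no serious obstacle here; the only point to watch is that the identity above genuinely uses the vanishing trace of $\vcko$, so the argument stays confined to $H^1_0(\Omega)^2$. For general $\vcko \in H^1(\Omega)^2$ the analogous bound (Korn's second inequality) fails for this elementary reason and instead requires a compactness/contradiction argument or Lions' lemma, with a constant depending on $\Omega$ less explicitly; but that refinement is not needed for the present statement — alternatively, one may simply invoke~\cite{Nec-1967}.
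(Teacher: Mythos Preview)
Your proof is correct. The paper itself does not prove this lemma at all; it merely states it with a citation to Ne\v{c}as~\cite{Nec-1967} as a standard result. Your argument is the classical elementary proof of Korn's first inequality for $H^1_0$ functions: the integration-by-parts identity $2\|\D{\vcko}\|_0^2 = \|\nabla\vcko\|_0^2 + \|\di\vcko\|_0^2$ for compactly supported smooth fields, extended by density, then combined with Poincar\'{e}--Friedrichs. This even yields an explicit constant $\alpha_1 = \sqrt{2(1+c_\Omega^2)}$, which the paper does not provide. Your closing remark is also apt: the restriction to $H^1_0(\Omega)^2$ is what makes the identity work cleanly, and nothing more is needed here.
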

We introduce the function
\begin{align}
D(h) \defeq (1+|\log h|)^{1/2},
\label{Dofh}
\end{align}
which is used in the sequel.
\begin{lemma}[ \cite{BreSco-2008,Cia-1978,Cle-1975} ]\label{lem:interpolation}
The following inequalities hold.
\begin{align}
 \n{0,\infty}{\Pi_h\gcko} &\le \n{0,\infty}{\gcko}, & \forall \gcko &\in L^\infty(\Omega)^s,
\label{ieq:Pi_h_0inf} \\
 \n{1,\infty}{\Pi_h\gcko} &\le \alpha_{20} \n{1,\infty}{\gcko}, & \forall \gcko &\in W^{1,\infty}(\Omega)^s, \notag \\
 \n{0}{\Pi_h \gcko - \gcko} &\le \alpha_{21} h \n{1}{\gcko}, & \forall \gcko & \in H^1(\Omega)^s \cap L^\infty(\Omega)^s, \notag \\
 \n{1}{\Pi_h \gcko - \gcko} &\le \alpha_{22} h \n{2}{\gcko}, & \forall \gcko & \in H^2(\Omega)^s, \notag \\
 \n{0,\infty}{\gcko_h} & \le \alpha_{23} h^{-1} \n{0}{\gcko_h}, & \forall \gcko_h & \in S_h, \notag \\
 \n{0,\infty}{\gcko_h}  & \le \alpha_{24} D(h) \n{1}{\gcko_h}, & \forall \gcko_h & \in S_h, \notag \\
 \n{1,\infty}{\gcko_h}  & \le \alpha_{25} h^{-1} \n{1}{\gcko_h}, & \forall \gcko_h & \in S_h, \notag \\
 \n{1}{\gcko_h}  & \le \alpha_{26} h^{-1} \n{0}{\gcko_h}, & \forall \gcko_h & \in S_h, \notag
\end{align}
where $s=2$ or $2\times 2$ and $S_h=V_h$ or $W_h$.
\end{lemma}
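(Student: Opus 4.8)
The plan is to recognise that all eight inequalities are classical finite element estimates and to assemble them from \cite{BreSco-2008,Cia-1978,Cle-1975}; the only point that genuinely needs care is the uniformity of the constants with respect to~$h$, which throughout is supplied by the shape-regularity of the family~$\{\trian\}_{h\downarrow0}$ together with the inverse assumption $h/h_K\le\alpha_0$. I would organise the argument into three groups: the {Cl\'{e}ment} stability and approximation bounds, the three inverse inequalities, and the logarithmic inverse estimate.

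For the {Cl\'{e}ment} bounds, recall that $(\Pi_h\gcko)(P)$ at a vertex~$P$ is the value at~$P$ of the $L^2$-projection of~$\gcko$ onto~$P_1$ over the patch~$\tilde\omega_P$ of elements containing~$P$ (and~$0$ at boundary vertices when the range space carries a homogeneous boundary condition), hence a weighted average of~$\gcko$ over~$\tilde\omega_P$. The $L^\infty$-stability $\n{0,\infty}{\Pi_h\gcko}\le\n{0,\infty}{\gcko}$ is then immediate, and the $W^{1,\infty}$-bound follows by differentiating on each~$K$, an elementwise inverse estimate, and the $L^\infty$-stability of the patch projection. The $L^2$- and $H^1$-error bounds come from the standard local estimate $\|\gcko-\Pi_h\gcko\|_{H^m(K)}\le c\,h_K^{\,k-m}\,|\gcko|_{H^k(\tilde\omega_K)}$ with $(k,m)=(1,0)$ and $(k,m)=(2,1)$ respectively, proved by a {Bramble}--{Hilbert} argument on a reference patch; squaring, summing over~$K\in\trian$, and using the bounded overlap of the patches—with multiplicity depending only on the regularity of the family—gives the asserted global bounds with $\alpha_{21},\alpha_{22}$ depending only on~$\Omega$. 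Vector- and matrix-valued functions are handled componentwise. The three inverse inequalities $\n{0,\infty}{\gcko_h}\le\alpha_{23}h^{-1}\n{0}{\gcko_h}$, $\n{1,\infty}{\gcko_h}\le\alpha_{25}h^{-1}\n{1}{\gcko_h}$ and $\n{1}{\gcko_h}\le\alpha_{26}h^{-1}\n{0}{\gcko_h}$ are obtained elementwise by pulling back to the reference triangle, using equivalence of norms on the finite-dimensional space $P_1(\hat K)^s$, and scaling back; shape-regularity turns the powers of~$h_K$ into powers of~$h$ after taking the maximum (for the $L^\infty$-norms) or the sum (for the $H^1$-seminorm) over $K\in\trian$.

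The only genuinely two-dimensional and slightly delicate item—and hence the main, if mild, obstacle—is $\n{0,\infty}{\gcko_h}\le\alpha_{24}D(h)\n{1}{\gcko_h}$. For this I would combine the $L^p$-inverse estimate $\n{0,\infty}{\gcko_h}\le c\,h^{-2/p}\n{0,p}{\gcko_h}$, whose constant is uniform for $p\in[2,\infty)$, with the two-dimensional {Sobolev} embedding $\n{0,p}{\gcko_h}\le c\,p^{1/2}\n{1}{\gcko_h}$, in which the $p^{1/2}$ growth of the embedding constant is essential, and then optimise over~$p$: the choice $p\sim 1+|\log h|$ keeps $h^{-2/p}$ bounded and leaves precisely the factor $p^{1/2}\sim(1+|\log h|)^{1/2}=D(h)$. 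Since the $L^p$-inverse constant carries no growth in~$p$, only a single power of $D(h)$ survives, which is exactly the claimed estimate.
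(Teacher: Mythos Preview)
The paper does not prove this lemma at all: it is stated as a collection of known facts with citations to \cite{BreSco-2008,Cia-1978,Cle-1975}, and the text moves directly on to the next lemma. Your proposal is therefore not a comparison target but a self-contained sketch of the standard arguments, and as such it is essentially correct and well organised; in particular your derivation of the logarithmic inverse estimate via the $L^p$ inverse inequality, the two-dimensional Sobolev embedding with $p^{1/2}$ growth, and optimisation $p\sim 1+|\log h|$ is the right mechanism.

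One small point deserves tightening. You assert that the nodal value of the Cl\'ement interpolant is ``a weighted average of $\gcko$ over $\tilde\omega_P$'' and that the $L^\infty$-stability with constant exactly~$1$ is ``then immediate''. For the original Cl\'ement operator (local $L^2$-projection onto $P_1$) the evaluation at~$P$ is a linear functional $\int_{\tilde\omega_P} w_P\,\gcko$, but the kernel $w_P$ is in general sign-changing, so one only gets a bound with a shape-regularity constant, not constant~$1$. The bound \eqref{ieq:Pi_h_0inf} with constant~$1$ corresponds rather to the variant in which the nodal value is the patch \emph{average} (projection onto constants), for which the argument you give is indeed immediate. Since the paper labels the operator simply $\Pi_h$ and cites \cite{Cle-1975}, you should either specify which variant you use or weaken the constant to some $\alpha$; the rest of the paper never exploits that the constant is exactly~$1$, so either fix is harmless.
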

\begin{lemma}[ \cite{BreDou-1988} ]\label{lem:estimates_Stokes_projection}
Assume $(\ucko, p) \in (V \cap H^2(\Omega)^2) \times (Q \cap H^1(\Omega))$.
Let $(\hat{\ucko}_h, \hat{p}_h) \in V_h \times Q_h$ be the {\rm Stokes} projection of $(\ucko, p)$ by~\eqref{Stokes_projection}.
Then, the following inequalities hold,
\begin{align*}
\n{1}{\SPu - \ucko}, \ \ \n{0}{\SPp - p}, \ \  |\SPp - p|_h & \le \alpha_3 h \n{H^2 \times H^1 }{(\ucko, p)}.
\end{align*}
\end{lemma}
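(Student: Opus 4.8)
The "final statement" in the excerpt is Lemma~\ref{lem:estimates_Stokes_projection}, the standard error estimate for the Brezzi--Pitkäranta-stabilized Stokes projection. I write my proof plan for this lemma.

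\textbf{Overall approach.} The plan is to follow the classical duality/energy argument for stabilized mixed methods, as in Brezzi--Douglas~\cite{BreDou-1988}, but carried out in the $|\cdot|_h$-dependent norm that the stabilization $\mathcal{S}_h$ naturally controls. The key structural facts I will use are: (a) the modified bilinear form $\mathcal{A}_h$ satisfies an inf--sup (stability) condition on $V_h\times Q_h$ uniformly in $h$, with respect to the mesh-dependent norm $\vvvert(\vcko_h,q_h)\vvvert \defeq \bigl(\n{1}{\vcko_h}^2 + \n{0}{q_h}^2 + |q_h|_h^2\bigr)^{1/2}$; (b) the Galerkin orthogonality built into the definition~\eqref{Stokes_projection}, namely $\mathcal{A}_h\((\SPu-\ucko,\SPp-p),(\vcko_h,q_h)\) = -\mathcal{S}_h(p,q_h)$ for all $(\vcko_h,q_h)\in V_h\times Q_h$ (since $\mathcal{A}_h = \mathcal{A} - \mathcal{S}_h$ and $\mathcal{A}$ is consistent); and (c) standard interpolation estimates from Lemma~\ref{lem:interpolation}.

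\textbf{Key steps, in order.} First I would establish the discrete stability of $\mathcal{A}_h$: for any $(\vcko_h,q_h)\in V_h\times Q_h$ there exists $(\wcko_h,r_h)$ with $\vvvert(\wcko_h,r_h)\vvvert \le c\,\vvvert(\vcko_h,q_h)\vvvert$ such that $\mathcal{A}_h\((\vcko_h,q_h),(\wcko_h,r_h)\) \ge c\,\vvvert(\vcko_h,q_h)\vvvert^2$. This is proved by the Verfürth-type trick — testing with $(\vcko_h,-q_h)$ gives control of $\nu\n{1}{\vcko_h}^2 + \delta_0\sum_K h_K^2\n{\nabla q_h}{0,K}^2$, then adding a small multiple of a suitable $\wcko_h\in V_h$ recovering $\n{0}{q_h}$ via the continuous inf--sup condition and the Clément interpolation bound $\n{1}{\Pi_h\wcko}\le c\n{1}{\wcko}$ together with the local estimate $h_K\n{\nabla q_h}{0,K}$ absorbing the consistency error of $\Pi_h$. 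Next, write $\SPu-\ucko = (\SPu - \Pi_h^L\ucko) + (\Pi_h^L\ucko - \ucko) \eqdef \etacko_h + \rho$, and similarly for the pressure with $\Pi_h p$. Apply the stability estimate to the discrete error $(\etacko_h, \SPp-\Pi_h p)$, use Galerkin orthogonality (b) to replace $\mathcal{A}_h$ acting on $(\etacko_h,\cdot)$ by $-\mathcal{A}_h\((\rho, p-\Pi_h p),(\wcko_h,r_h)\) - \mathcal{S}_h(p,r_h)$, bound each term using continuity of $a_u$, $b$, $\mathcal{S}_h$ and Cauchy--Schwarz, and absorb. Then the triangle inequality and the interpolation estimates $\n{1}{\Pi_h^L\ucko-\ucko}\le c h\n{2}{\ucko}$, $\n{0}{\Pi_h p - p} + |\Pi_h p - p|_h \le c h\n{1}{p}$, $|p|_h - |\Pi_h p|_h = O(h\n{1}{p})$ finish the bound for $\n{1}{\SPu-\ucko}$ and the two pressure quantities.

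\textbf{Main obstacle.} The delicate point is getting the $|\SPp-p|_h$ bound cleanly: the stabilization term $\mathcal{S}_h(p,r_h)$ appearing in Galerkin orthogonality is a genuine consistency error, and it must be estimated by $|p|_h\,|r_h|_h \le c h\n{1}{p}\cdot\vvvert(\wcko_h,r_h)\vvvert$, which is exactly of the right order $O(h)$ but requires that the stability norm $\vvvert\cdot\vvvert$ include the $|\cdot|_h$ piece of the pressure — this is why the mesh-dependent norm is the correct functional setting and why one cannot work purely in $L^2$ for the pressure at the level of the stability argument. The other place requiring care is the uniformity of all constants in $\nu$: here $\alpha_3$ is allowed to depend on $\nu$ (the statement only claims independence of $h$), so one may simply carry $\nu$ through; I would remark that by scaling the pressure one can make the $\nu$-dependence explicit if desired, but it is not needed for the lemma as stated.
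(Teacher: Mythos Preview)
Your proof plan is correct and follows the standard route for the Brezzi--Pitk\"aranta stabilized Stokes projection: discrete inf--sup stability of $\mathcal{A}_h$ in the mesh-dependent norm via the Verf\"urth trick, Galerkin orthogonality with the consistency residual $-\mathcal{S}_h(p,q_h)$, and interpolation. There is nothing to compare against, however, because the paper does not prove this lemma at all---it is stated with the citation~\cite{BreDou-1988} and used as a known result, so your write-up would in fact supply strictly more than what the paper contains.
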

\begin{lemma}[ \cite{LMNT-Peterlin_Oseen_Part_I} ]\label{lem:composite_func}
Under Hypothesis~\ref{hyp:w} and the condition~\eqref{cond:dt_w_Jacobian} the following inequality holds for any $n \in \{0,\ldots,N_T\}$
\begin{align*}
\n{0}{\gcko \circ X_1^n} & \leq (1 + \alpha_4 |\wcko^n |_{1,\infty} \Delta t) \n{0}{\gcko}, && \forall \gcko \in L^2(\Omega)^s,
\end{align*}
where $s= 2$ or $2\times 2$.
\end{lemma}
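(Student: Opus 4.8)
The plan is to estimate $\n{0}{\gcko \circ X_1^n}$ by the change of variables $y = X_1^n(x)$ and to control the resulting Jacobian factor. First I would invoke Proposition~\ref{prop:RT_TU}: since~\eqref{cond:dt_w_Jacobian} implies $\Delta t\,|\wcko|_{C(W^{1,\infty})} < 1$, the map $X_1^n:\Omega\to\Omega$ is a bijection by part~(i), and its Jacobian $J^n$ satisfies $1/2 \le J^n \le 3/2$ by part~(ii). Because $\wcko^n$ vanishes on $\Gamma$ (Hypothesis~\ref{hyp:w}, the subscript $0$ in $W^{1,\infty}_0$), the map $X_1^n$ fixes the boundary and carries $\Omega$ onto all of $\Omega$, so the substitution leaves the domain of integration unchanged.

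With $\Phi \defeq (X_1^n)^{-1}$ and $y = X_1^n(x)$, the change of variables gives
\begin{align*}
\n{0}{\gcko \circ X_1^n}^2 = \int_\Omega |\gcko(y)|^2 \, \bigl( J^n(\Phi(y)) \bigr)^{-1} \, dy \le \n{0,\infty}{(J^n)^{-1}} \, \n{0}{\gcko}^2 ,
\end{align*}
the matrix case $s = 2\times 2$ being handled componentwise, since the composition acts identically on each entry and the $L^2$ norm is the sum of the squared entries. It then remains to show the pointwise bound $(J^n)^{-1} \le (1 + \alpha_4 |\wcko^n|_{1,\infty}\Delta t)^2$.

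To this end I would expand the Jacobian explicitly. In two dimensions $J^n = \det(\Icko - \Delta t\,\nabla\wcko^n) = 1 - \Delta t\,\di\wcko^n + (\Delta t)^2 \det(\nabla\wcko^n)$, whence $|1 - J^n| \le c\,\Delta t\,|\wcko^n|_{1,\infty} + c\,(\Delta t)^2 |\wcko^n|_{1,\infty}^2$ for a numerical constant $c$, using $|\di\wcko^n| \le c\,|\wcko^n|_{1,\infty}$ and $|\det(\nabla\wcko^n)| \le c\,|\wcko^n|_{1,\infty}^2$ pointwise. Absorbing the quadratic term into the linear one via~\eqref{cond:dt_w_Jacobian} gives $|1 - J^n| \le c'\,\Delta t\,|\wcko^n|_{1,\infty} \eqdef a$. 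A short elementary argument then yields $(J^n)^{-1} \le 1 + 2a$ pointwise: when $a \le 1/2$ one has $J^n \ge 1 - a \ge 1/2$ together with $\tfrac{1}{1-a} \le 1 + 2a$; when $a > 1/2$ one falls back on $J^n \ge 1/2$ from Proposition~\ref{prop:RT_TU}(ii), so that $(J^n)^{-1} \le 2 \le 1 + 2a$.

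Combining the two displays and using $\sqrt{1 + 2a} \le 1 + a$ gives
\begin{align*}
\n{0}{\gcko \circ X_1^n} \le (1 + 2a)^{1/2} \, \n{0}{\gcko} \le \bigl( 1 + c'\,|\wcko^n|_{1,\infty}\Delta t \bigr) \, \n{0}{\gcko} ,
\end{align*}
which is the claim with $\alpha_4 = c'$. Since $c'$ depends only on the spatial dimension through the trace and determinant identities, $\alpha_4$ is a numerical constant independent of $h$, $\Delta t$, $\nu$ and $\varepsilon$. I do not expect a genuine obstacle here; the only point requiring care is producing the clean multiplicative factor $(1 + \alpha_4 |\wcko^n|_{1,\infty}\Delta t)$ in place of a crude constant bound. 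This is precisely what forces the explicit Jacobian expansion and the case distinction above, supplemented by the uniform lower bound $J^n \ge 1/2$ from Proposition~\ref{prop:RT_TU}(ii) to cover the regime where the linearized lower bound on $J^n$ is not itself informative.
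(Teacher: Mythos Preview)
Your argument is correct; the paper itself does not give a proof but cites the companion paper~\cite{LMNT-Peterlin_Oseen_Part_I} (Part~II), and the change-of-variables plus explicit Jacobian expansion you carry out is precisely the standard route one expects there. The only minor remark is that under~\eqref{cond:dt_w_Jacobian} the quantity $a = c'\Delta t\,|\wcko^n|_{1,\infty}$ is already bounded by a fixed numerical constant, so the case distinction is largely cosmetic---but it does no harm and makes the argument self-contained.
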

We present a key lemma in order to deal with the nonlinear terms.
\begin{lemma}\label{lem:vanish_nonlin}
 For $\Ecko \in \mathbb{R}^{2\times 2}$ and $\Dcko \in \mathbb{R}^{2 \times 2}_{sym}$ it holds that
\begin{align}
    (\tr \Dcko)\Dcko : \Ecko - \Ecko \Dcko : \Dcko - \frac{1}{2} (\tr \Ecko ) \Dcko^{\#} : \Dcko = 0.
\label{eq0}
\end{align}
\end{lemma}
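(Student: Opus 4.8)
The plan is to verify the identity \eqref{eq0} by direct computation using the $2\times 2$ structure, since everything is linear in $\Ecko$ and quadratic in $\Dcko$, so both sides are polynomials in a handful of entries. First I would write $\Dcko = \begin{pmatrix} a & b \\ b & c \end{pmatrix}$ (using symmetry of $\Dcko$) and $\Ecko = \begin{pmatrix} e_{11} & e_{12} \\ e_{21} & e_{22} \end{pmatrix}$ with no symmetry assumed. Then $\tr\Dcko = a+c$, $\tr\Ecko = e_{11}+e_{22}$, and $\Dcko^{\#} = \begin{pmatrix} c & -b \\ -b & a \end{pmatrix}$.

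Next I would compute the three terms separately. For the first term, $(\tr\Dcko)\Dcko:\Ecko = (a+c)\,\Dcko:\Ecko = (a+c)(a\,e_{11} + b\,e_{12} + b\,e_{21} + c\,e_{22})$. For the third term, $\tfrac12(\tr\Ecko)\Dcko^{\#}:\Dcko = \tfrac12(e_{11}+e_{22})(ca - b^2 - b^2 + ac) = (e_{11}+e_{22})(ac - b^2)$, using $\Dcko^{\#}:\Dcko = 2(ac-b^2) = 2\det\Dcko$. The middle term is the only one needing a little care: $\Ecko\Dcko$ is the matrix product, and then one contracts with $\Dcko$; equivalently $\Ecko\Dcko:\Dcko = \Ecko : (\Dcko\Dcko^T) = \Ecko : \Dcko^2$ since $\Dcko$ is symmetric. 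Computing $\Dcko^2 = \begin{pmatrix} a^2+b^2 & b(a+c) \\ b(a+c) & b^2+c^2 \end{pmatrix}$, so $\Ecko\Dcko:\Dcko = e_{11}(a^2+b^2) + (e_{12}+e_{21})\,b(a+c) + e_{22}(b^2+c^2)$.

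Finally I would collect coefficients of each monomial $e_{ij}$ in the combination first term minus second term minus third term. The coefficient of $e_{11}$ is $(a+c)a - (a^2+b^2) - (ac - b^2) = a^2 + ac - a^2 - b^2 - ac + b^2 = 0$; by the symmetry $a \leftrightarrow c$ the coefficient of $e_{22}$ vanishes as well; and the coefficient of $e_{12}$ (and of $e_{21}$ by the same token) is $(a+c)b - b(a+c) = 0$. Hence the whole expression is identically zero, which is \eqref{eq0}.

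I do not anticipate a genuine obstacle here: the statement is an elementary algebraic identity specific to dimension two (indeed the appearance of the adjugate $\Dcko^{\#}$, which is a $2\times 2$ construction, is exactly why the three-dimensional analogue fails, as remarked in the introduction). The only mild subtlety is bookkeeping — keeping straight that $\Ecko$ is not symmetric so the terms $e_{12}$ and $e_{21}$ must be tracked, and correctly identifying $\Ecko\Dcko:\Dcko$ with $\Ecko:\Dcko^2$. A coordinate-free alternative would be to note $(\tr\Dcko)\Dcko - \Dcko^2 = (\det\Dcko)\Icko$ by the Cayley--Hamilton theorem in two dimensions, contract both sides with $\Ecko$ to get $(\tr\Dcko)\Dcko:\Ecko - \Dcko^2:\Ecko = (\det\Dcko)(\tr\Ecko)$, and then use $\Dcko^{\#}:\Dcko = 2\det\Dcko$ together with $\Dcko^2:\Ecko = \Dcko\Ecko:\Dcko$ (symmetry of $\Dcko$) to rewrite this as \eqref{eq0}; I would likely present this slicker Cayley--Hamilton argument rather than the brute-force entrywise one.
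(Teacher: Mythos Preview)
Your proposal is correct. The entrywise computation is precisely what the paper calls ``direct calculation'' (cf.\ Remark~\ref{rmk:vanish_nonlin}, where the same index manipulation is carried out with $\nabla\ucko$ and $\Ccko$ in place of $\Ecko$ and $\Dcko$), so your primary argument and the paper's proof coincide. Your alternative via Cayley--Hamilton is a nice addition not present in the paper: it makes transparent that the identity is exactly the two-dimensional characteristic-polynomial relation $(\tr\Dcko)\Dcko - \Dcko^2 = (\det\Dcko)\Icko$ contracted against $\Ecko$, which explains in one stroke both why the proof works and why the three-dimensional analogue fails.
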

%
%
\begin{proof}
The direct calculation yields the result, see also Remark~\ref{rmk:vanish_nonlin}.
\end{proof}
\begin{remark}\label{rmk:vanish_nonlin}
Let $(\ucko, p, \Ccko)$ be a solution of~\eqref{model}.
Multiplying~\eqref{model_ucko} and~\eqref{model_Ccko} by $\ucko$ and $\Ccko/2$, respectively, and adding them, we can obtain an energy inequality on $(\ucko,\Ccko)$ since the term derived from the nonlinear terms of~\eqref{model_ucko} and~\eqref{model_Ccko} vanishes,
\begin{align}
  (\di [(\tr \Ccko)\Ccko], \ucko)+ \frac{1}{2}((\nabla \ucko)\Ccko+\Ccko(\nabla \ucko), \Ccko) = 0.
\label{eq1}
\end{align}
Identity~\eqref{eq1} is proved as follows.
The left-hand side is equal to
\begin{align}
& -( (\tr \Ccko)\Ccko, {\nabla}\ucko)+ ((\nabla \ucko)\Ccko,\Ccko) = (\nabla \ucko, \Ccko \Ccko^T - (\tr \Ccko)\Ccko)
\nonumber\\
& =\int_\Omega \sum_{i,j=1}^{2} \prz{u_i}{x_j} \sum_{k=1}^{2}(C_{ik}C_{jk}-C_{kk}C_{ij})~dx
\nonumber\\
& =\int_\Omega \Bigl( \prz{u_1}{x_1} + \prz{u_2}{x_2} \Bigr) (C_{12}C_{12}-C_{11}C_{22})~dx
= -\frac{1}{2} \bigl( (\di \ucko) \Ccko^{\#}, \Ccko \bigr)
\label{eq2}
\end{align}
Since $\di u=0$, \eqref{eq2} implies \eqref{eq1}.
In the approximate solution $(\ucko_h, p_h, \Ccko_h)$ the exact incompressibility $\di \ucko_h=0$ does not hold.
Hence, \eqref{eq1} is not true, in general, for $(\ucko_h, \Ccko_h)$.
On the other hand, \eqref{eq2} is always valid regardless of the property of $u$.
Therefore, by adding the second term of the right-hand side in (5b), $(\di \ucko_h^n (\Ccko_h^n)^{\#}, \Dcko_h)$, we can obtain the corresponding equation to \eqref{eq1} for $(\ucko_h^n, \Ccko_h^n)$,
\begin{align*}
  -( (\tr \Ccko_h^n) \Ccko_h^n, \nabla \ucko_h^n)+ ((\nabla \ucko_h^n) \Ccko_h^n, \Ccko_h^n)+\frac{1}{2}(\di \ucko_h^n (\Ccko_h^n)^{\#}, \Ccko_h^n) = 0,
\end{align*}
which plays a key role in the following stability analysis.
Identity~\eqref{eq0} is proved similarly to~\eqref{eq2} by replacing $\Ccko$ and $\nabla \ucko$ by $\Dcko$ and $\Ecko$, respectively.
\end{remark}
\begin{remark}
(i)~Lemma~\ref{lem:vanish_nonlin} does not hold in three-dimensional case.
This is the reason why we consider two-dimensional case in this paper.
\smallskip\\
(ii)~By virtue of the term~$( \di \au (\aC)^{\#},\Dcko_h )$ in scheme~\eqref{nonlin_scheme}, we can prove the error estimates for $\varepsilon = 0$, which is an advantage of the nonlinear scheme.
In Part~II, we propose a linear scheme for the model~\eqref{model} and prove error estimates for $\varepsilon > 0$, where the presence of $\Delta \Ccko$ in~\eqref{model_Ccko} is essentially employed.
It is, therefore, not easy to show error estimates of the linear scheme in a similar way for $\varepsilon = 0$.
On the other hand, the linear scheme has an advantage that the proof of the error estimates can be extended to three-dimensional problems.
\end{remark}
%
%
%
%
\begin{lemma}[ \cite{TabTag-2005} ]\label{lem:Gronwall}
Let $a_i$, $i=1, 2$, be non-negative number, $\Delta t$ a positive number, and $\{x^n\}_{n\geq 0}$, $\{y^n\}_{n\geq 1}$ and $\{b^n\}_{n\geq 1}$ non-negative sequences.
Assume $\Delta t \in (0,1/(2 a_0)]$ for $a_0 \neq 0$.
Suppose
\begin{align*}
\ol{D}_{\Delta t} x^n + y^n  \leq a_0 x^n + a_1 x^{n-1} + b^n, \quad \forall n\geq 1.
\end{align*}
Then, it holds that
\begin{align*}
 x^n + \Delta t \sum_{i=1}^n y^i   \leq \exp [ (2a_0+a_1) n \Delta t ] \biggl( x^0  + \Delta t \sum_{i=1}^n b^i \biggr), \quad \forall n\ge 1.
\end{align*}
\end{lemma}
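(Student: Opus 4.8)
The statement is the standard discrete Gronwall inequality, and my plan is to reduce the difference inequality to a one-step geometric recursion whose amplification factor is controlled by an exponential, and then to iterate that recursion. I would first clear the denominator: multiplying the hypothesis $\ol{D}_{\Delta t} x^n + y^n \le a_0 x^n + a_1 x^{n-1} + b^n$ by $\Delta t > 0$ and rearranging yields
\[
(1-a_0\Delta t)\,x^n + \Delta t\, y^n \le (1+a_1\Delta t)\,x^{n-1} + \Delta t\, b^n, \qquad \forall n \ge 1.
\]
The assumption $\Delta t \in (0, 1/(2a_0)]$ guarantees $1 - a_0 \Delta t \ge 1/2 > 0$, so this inequality may be divided by $1 - a_0\Delta t$ without reversing it.

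The key elementary estimate is $\dfrac{1+a_1\Delta t}{1-a_0\Delta t} \le e^{(2a_0+a_1)\Delta t} =: \rho$. Indeed $1 + a_1\Delta t \le e^{a_1\Delta t}$, while $\dfrac{1}{1-a_0\Delta t} \le 1 + 2a_0\Delta t \le e^{2a_0\Delta t}$, using that $1/(1-s) \le 1+2s$ for $s \in [0,1/2]$ (equivalently $s(2s-1) \le 0$) together with $1 + t \le e^t$. To carry the running sum of the $y^i$ through the iteration, I would set $z^n \defeq x^n + \Delta t \sum_{i=1}^n y^i$, so that $z^0 = x^0$. Adding $\Delta t \sum_{i=1}^{n-1} y^i$ to both sides of the rearranged inequality and using $1 - a_0\Delta t \le 1 \le 1 + a_1\Delta t$ to attach the extremal factors to the two partial sums, one obtains $(1-a_0\Delta t)\,z^n \le (1+a_1\Delta t)\,z^{n-1} + \Delta t\, b^n$; dividing and using $\rho \ge 1$ gives the recursion $z^n \le \rho\,(z^{n-1} + \Delta t\, b^n)$.

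Finally I would iterate: since $\rho \ge 1$,
\[
z^n \le \rho^n z^0 + \sum_{i=1}^n \rho^{\,n-i+1}\,\Delta t\, b^i \le \rho^n \Bigl( x^0 + \Delta t \sum_{i=1}^n b^i \Bigr),
\]
and $\rho^n = e^{(2a_0+a_1)n\Delta t}$ together with $z^n = x^n + \Delta t\sum_{i=1}^n y^i$ is precisely the claimed bound. The argument presents no genuine obstacle; the only points requiring care are the positivity $1-a_0\Delta t \ge 1/2$ furnished by the stated time-step restriction, and making the elementary bound on $(1+a_1\Delta t)/(1-a_0\Delta t)$ sharp enough that the exponent in the conclusion is exactly $(2a_0+a_1)n\Delta t$ rather than something larger. (One could alternatively skip the auxiliary variable $z^n$ and recover the $\sum y^i$ term afterwards by summing the rearranged inequality and inserting the already-established bound on $x^n$, but the substitution $z^n$ keeps the constants clean.)
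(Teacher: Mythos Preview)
Your argument is correct and complete: the reduction to the one-step recursion for $z^n = x^n + \Delta t\sum_{i=1}^n y^i$, the elementary bound $(1+a_1\Delta t)/(1-a_0\Delta t)\le e^{(2a_0+a_1)\Delta t}$ via $1/(1-s)\le 1+2s$ on $[0,1/2]$, and the geometric iteration all go through exactly as you describe. The paper does not supply its own proof of this lemma but merely cites~\cite{TabTag-2005}, so there is no in-paper argument to compare against; your self-contained derivation is a welcome addition.
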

\begin{lemma}[ {\cite[Chap.~II,~Lemma~1.4]{Tem-1984}}, {\cite[Chap.~I,~Lemme~4.3]{Lio-1969}} ]\label{lem:Brouwer}
Let $X$ be a finite dimensional {\rm Hilbert} space with inner product~$(\cdot,\cdot)_X$ and norm~$\|\cdot\|_X$ and let $\mathcal{P}$ be a continuous mapping from $X$ into itself such that $(\mathcal{P}(\xi),\xi)_X > 0$ for $\|\xi\|_X = \rho_0 > 0$.
Then, there exists~$\xi\in X$, $\|\xi\|_X\le \rho_0$, such that $\mathcal{P}(\xi)=0$.
\end{lemma}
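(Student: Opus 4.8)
The plan is to derive the conclusion from Brouwer's fixed point theorem by a contradiction argument. Suppose, for contradiction, that $\mathcal{P}(\xi)\neq 0$ for every $\xi$ in the closed ball $B\defeq\{\xi\in X;~\|\xi\|_X\le\rho_0\}$. Since $X$ is finite-dimensional, $B$ is a nonempty compact convex set (homeomorphic to the closed unit ball of $\mathbb{R}^{\dim X}$), so Brouwer's theorem applies to any continuous self-map of $B$.

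First I would introduce the map $\Phi:B\to X$ defined by
\[
\Phi(\xi)\defeq -\rho_0\,\frac{\mathcal{P}(\xi)}{\|\mathcal{P}(\xi)\|_X}.
\]
The contradiction hypothesis guarantees $\mathcal{P}(\xi)\neq 0$ on $B$, so $\Phi$ is well-defined, and it is continuous on $B$ because $\mathcal{P}$ is. Since $\|\Phi(\xi)\|_X=\rho_0$ for all $\xi\in B$, the map $\Phi$ sends $B$ into $B$. Brouwer's fixed point theorem then provides a point $\xi^\ast\in B$ with $\Phi(\xi^\ast)=\xi^\ast$.

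Next I would use that any such fixed point lies on the sphere, $\|\xi^\ast\|_X=\|\Phi(\xi^\ast)\|_X=\rho_0$, and take the inner product of the identity $\xi^\ast=-\rho_0\,\mathcal{P}(\xi^\ast)/\|\mathcal{P}(\xi^\ast)\|_X$ with $\mathcal{P}(\xi^\ast)$:
\[
(\mathcal{P}(\xi^\ast),\xi^\ast)_X=-\frac{\rho_0}{\|\mathcal{P}(\xi^\ast)\|_X}\,(\mathcal{P}(\xi^\ast),\mathcal{P}(\xi^\ast))_X=-\rho_0\,\|\mathcal{P}(\xi^\ast)\|_X<0.
\]
This contradicts the assumption $(\mathcal{P}(\xi),\xi)_X>0$ for $\|\xi\|_X=\rho_0$. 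Hence $\mathcal{P}$ must vanish at some point of $B$, which is exactly the claim.

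The argument is classical and I do not expect any genuine obstacle; the only points needing care are checking that $\Phi$ is a continuous map of $B$ into itself — which is precisely where the nonvanishing of $\mathcal{P}$ (supplied by the contradiction hypothesis) enters — and noting that a fixed point of $\Phi$ automatically has norm equal to $\rho_0$, so that the sign hypothesis, available only on the sphere, can indeed be invoked.
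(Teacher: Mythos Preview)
Your argument is correct and is precisely the classical proof given in the cited references (Temam, Lions): assume $\mathcal{P}$ has no zero in the closed ball, push to the sphere via $\Phi(\xi)=-\rho_0\,\mathcal{P}(\xi)/\|\mathcal{P}(\xi)\|_X$, apply Brouwer, and contradict the sign hypothesis at the resulting fixed point. The paper itself does not supply a proof of this lemma but merely quotes it from those sources, so there is nothing further to compare.
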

%
%
%
\subsection{Proof of Proposition~\ref{prop:existence}}
%
We apply Lemma~\ref{lem:Brouwer} for the proof.
Let $n\in\{1,\ldots, N_T\}$ be a fixed number and $(\ucko_h^{n-1}, \Ccko_h^{n-1}) \in V_h\times W_h$ a pair of given functions.
We set $\mu_0 \defeq {(1-2\Delta t)/2 > 0}$.
We define a finite dimensional inner product space $X \defeq V_h\times Q_h\times W_h$ equipped with the inner product,
\begin{align*}
\bigl( (\ucko_h, p_h, \Ccko_h), (\vcko_h, q_h, \Dcko_h) \bigr)_X & \defeq \fz{1}{\Delta t} (\ucko_h, \vcko_h) + 4 \nu \bigl( \D{\ucko_h}, \D{\vcko_h} \bigr) \\
& \quad + 2 \delta_0 \sum_{K\in \mathcal{T}_h} h_K^2 (p_h, q_h)_K + \fz{\mu_0}{\Delta t} (\Ccko_h, \Dcko_h) + \varepsilon (\nabla\Ccko_h, \nabla\Dcko_h),
\end{align*}
which induces the norm~$\|\cdot\|_X$ for any $\varepsilon \ge 0$.
Let $\mathcal{P}: V_h\times Q_h\times W_h \to V_h\times Q_h\times W_h$ be a mapping defined by
\begin{align}
\bigl( \mathcal{P} (\ucko_h, p_h, \Ccko_h), (\vcko_h, q_h, \Dcko_h) \bigr)_X
& = \biggl( \frac{\ucko_h-\aou\circ X_1^n}{\Delta t}, \vcko_h \biggr) + \mathcal{A}_h \bigl( ( \ucko_h, p_h ), (\vcko_h, -q_h) \bigr) + \bigl( (\tr\Ccko_h)\Ccko_h, \nabla \vcko_h \bigr) \notag\\
& \quad - ( \af,\vcko_h ) + \fz{1}{2} \biggl( \frac{\Ccko_h-\aoC \circ X_1^n}{\Delta t}, \Dcko_h \biggr) + \fz{\varepsilon}{2} a_{c}( \Ccko_h, \Dcko_h ) - \bigl( (\nabla\ucko_h)\Ccko_h, \Dcko_h \bigr) \notag\\
& \quad - \fz{1}{2} \bigl( (\di \ucko_h) \Ccko_h^{\#},\Dcko_h \bigr) + \fz{1}{2} \bigl( (\tr\Ccko_h)^2 \Ccko_h, \Dcko_h \bigr) - \fz{1}{2}\bigl( (\tr\Ccko_h)\Icko,\Dcko_h \bigr) \notag\\
& \quad - \fz{1}{2} ( \aF, \Dcko_h ),
\qquad\qquad \forall (\ucko_h, p_h, \Ccko_h), (\vcko_h,q_h,\Dcko_h) \in V_h\times Q_h \times W_h.
\label{nonlin_scheme_1}
\end{align}
Obviously $\mathcal{P}$ is continuous.
Substituting~$(\ucko_h, p_h, \Ccko_h)$ into $(\vcko_h,q_h,\Dcko_h)$ in~\eqref{nonlin_scheme_1} and using the inequality $\|\tr\Ccko_h\|_0 \le \sqrt{2} \|\Ccko_h\|_0$, we have
\begin{align*}
& \bigl( \mathcal{P} (\ucko_h, p_h, \Ccko_h), (\ucko_h, p_h, \Ccko_h) \bigr)_X \\
& = \biggl( \frac{\ucko_h-\aou\circ X_1^n}{\Delta t}, \ucko_h \biggr) + 2\nu \|\D{\ucko_h}\|_0^2 + \delta_0 |p_h|_h^2 - ( \af,\ucko_h ) \\
& \quad + \fz{1}{2} \biggl( \frac{\Ccko_h-\aoC \circ X_1^n}{\Delta t}, \Ccko_h \biggr) + \fz{\varepsilon}{2} |\Ccko_h|_1^2 + \fz{1}{2} \|(\tr\Ccko_h) \Ccko_h\|_0^2
- \fz{1}{2} \|\tr\Ccko_h\|_0^2 - \fz{1}{2} ( \aF, \Ccko_h ) \\
& \ge \fz{1}{\Delta t}\bigl( \|\ucko_h\|_0^2 - \|\aou\circ X_1^n\|_0\|\ucko_h\|_0) + 2\nu \|\D{\ucko_h}\|_0^2 + \delta_0 |p_h|_h^2 - \|\af\|_0 \|\ucko_h\|_0 \\
& \quad + \fz{1}{2\Delta t} \bigl( \|\Ccko_h\|_0^2 - \|\aoC \circ X_1^n\|_0 \|\Ccko_h\|_0 \bigr) + \fz{\varepsilon}{2} |\Ccko_h|_1^2 - \|\Ccko_h\|_0^2 - \fz{1}{2} \|\aF\|_0 \|\Ccko_h\|_0 \qquad \mbox{(by Schwarz' inequality)} \\
& \ge \fz{1}{2\Delta t}\Bigl\{ 2\|\ucko_h\|_0^2 - \beta_0 \|\ucko_h\|_0^2 - \fz{1}{\beta_0}\|\aou\circ X_1^n\|_0^2 + \|\Ccko_h\|_0^2 - \beta_1 \|\Ccko_h\|_0^2 - \fz{1}{4\beta_1} \|\aoC \circ X_1^n\|_0^2 \Bigr\} \\
& \quad + 2 \nu \|\D{\ucko_h}\|_0^2 + \delta_0 |p_h|_h^2 - \fz{\beta_2}{2\Delta t} \|\ucko_h\|_0^2 - \fz{\Delta t}{2\beta_2} \|\af\|_0^2 + \fz{\varepsilon}{2} |\Ccko_h|_1^2 - \|\Ccko_h\|_0^2 - \fz{\beta_3}{2\Delta t} \|\Ccko_h\|_0^2 - \fz{\Delta t}{8 \beta_3} \|\aF\|_0^2 \\
& \qquad\qquad\qquad\qquad\qquad\qquad\qquad\qquad\qquad\qquad\qquad\qquad\qquad\qquad\qquad\qquad\qquad\qquad\quad
\mbox{(by $ab \le \fz{\beta}{2}a^2+\fz{1}{2\beta}b^2$)} \\
& \ge \fz{1}{2\Delta t}\Bigl\{ (2 - \beta_0 - \beta_2) \|\ucko_h\|_0^2 + (1- \beta_1 - 2\Delta t - \beta_3) \|\Ccko_h\|_0^2 \Bigr\} + 2 \nu \|\D{\ucko_h}\|_0^2 + \delta_0 |p_h|_h^2 \\
& \quad + \fz{\varepsilon}{2} |\Ccko_h|_1^2 - \fz{1}{2\beta_0\Delta t}\|\aou\circ X_1^n\|_0^2 - \fz{1}{8\beta_1\Delta t} \|\aoC \circ X_1^n\|_0^2 - \fz{\Delta t}{2\beta_2} \|\af\|_0^2 - \fz{\Delta t}{8 \beta_3} \|\aF\|_0^2 \qquad\quad \mbox{(by Lemma~\ref{lem:composite_func})}
\end{align*}
for any $\beta_i > 0$.
Choosing $\beta_0 = \beta_2 = 1/2$ and $\beta_1 = \beta_3 = \mu_0/2$, we get
\begin{align}
\bigl( \mathcal{P} (\ucko_h, p_h, \Ccko_h), (\ucko_h, p_h, \Ccko_h) \bigr)_X
& \ge \fz{1}{2}\biggl[ \biggl\{ \fz{1}{\Delta t}\|\ucko_h\|_0^2 + 4 \nu \|\D{\ucko_h}\|_0^2 + 2 \delta_0 |p_h|_h^2 + \fz{\mu_0}{\Delta t}\|\Ccko_h\|_0^2 + \varepsilon |\Ccko_h|_1^2 \biggr\} \notag\\
& \qquad -
\biggl\{ \fz{2\|\aou\circ X_1^n\|_0^2}{\Delta t} + \fz{\|\aoC \circ X_1^n\|_0^2}{2\mu_0\Delta t} + 2\Delta t \|\af\|_0^2 + \fz{\Delta t \|\aF\|_0^2}{2\mu_0} \biggr\} \biggr] \notag\\
& = \fz{1}{2} \Bigl[ \| (\ucko_h, p_h, \Ccko_h) \|_X^2 - \beta_\ast^2 \Bigr], \notag
\end{align}
where
\[
\beta_\ast := \biggl\{ \fz{2\|\aou\circ X_1^n\|_0^2}{\Delta t} + \fz{\|\aoC \circ X_1^n\|_0^2}{2\mu_0\Delta t} + 2\Delta t \|\af\|_0^2 + \fz{\Delta t \|\aF\|_0^2}{2\mu_0} \biggr\}^{1/2}.
\]
The right-hand side is, therefore, positive on the sphere of radius~$\rho_0 = \beta_\ast + 1$.
From Lemma~\ref{lem:Brouwer} there exists an element~$(\ucko_h, p_h, \Ccko_h)\in V_h\times Q_h\times W_h$ such that $\mathcal{P} (\ucko_h, p_h, \Ccko_h)=0$, which is nothing but a solution of equations~\eqref{nonlin_scheme}.
\qed
%
%
%
%
%
%
%
%
%
%
%
%
\subsection{A system of equations for the error and the estimate of remainder terms}
In this subsection we prepare a system of equations for the error and a lemma for the estimate of remainder terms in the system before starting the proof of Theorem~\ref{thm:error_estimates}.
\par
Let $(\hat{\ucko}_h, \hat{p}_h)(t) \defeq \Pi_h^{\rm S} (\ucko, p)(t) \in V_h \times Q_h$ and $\check{\Ccko}_h(t) \defeq \Pi_h \Ccko (t) \in W_h$ for $t\in [0,T]$ and let
\begin{align*}
\ecko_h^n & \defeq \ucko_h^n-\hat{\ucko}_h^n,
&
\epsilon_h^n &\defeq p_h^n-\hat{p}_h^n,
&
\Ecko_h^n & \defeq \Ccko_h^n-\check{\Ccko}_h^n,
&
\etacko (t) & \defeq (\ucko-\hat{\ucko}_h)(t),
&
\Xicko (t) & \defeq (\Ccko-\check{\Ccko}_h)(t).
\end{align*}
Then, from~\eqref{nonlin_scheme}, \eqref{Stokes_projection} and~\eqref{weak_formulation}, we have for $n\ge 1$
\begin{subequations}\label{eqns:error}
\begin{align}
\biggl( \fz{\ecko_h^n - \ecko_h^{n-1} \circ X_1^n}{\Delta t}, \vcko_h \biggr) & + \mathcal{A}_h \bigl( (\ecko_h^n,\epsilon_h^n), (\vcko_h,q_h) \bigr)
= - \bigl( (\tr\Ecko_h^n)\Ecko_h^n, \nabla \vcko_h \bigr) + {}_{V_h^\prime}\lA \rcko_h^n, \vcko_h\rA_{V_h},\qquad
\label{eq:error_up} \\
\biggl( \fz{\Ecko_h^n - \Ecko_h^{n-1} \circ X_1^n}{\Delta t}, \Dcko_h \biggr) & + \varepsilon a_c (\Ecko_h^n, \Dcko_h) = 2 \bigl( (\nabla \ecko_h^n)\Ecko_h^n, \Dcko_h \bigr) + \bigl( (\di \ecko_h^n) (\Ecko_h^n)^{\#}, \Dcko_h \bigr) + {}_{W_h^\prime}\lA \Rcko_h^n, \Dcko_h\rA_{W_h},
\label{eq:error_C}\\
& \qquad\qquad\qquad\qquad\qquad\qquad\qquad\qquad\qquad\qquad \forall (\vcko_h, q_h, \Dcko_h) \in V_h\times Q_h \times W_h, \notag
\end{align}
\end{subequations}
where
\begin{align*}
\rcko_h^n & \defeq \sum_{i=1}^4 \rcko_{hi}^n \in V_h^\prime, \qquad \Rcko_h^n \defeq \sum_{i=1}^{11} \Rcko_{hi}^n \in W_h^\prime, \\
( \rcko_{h1}^n, \vcko_h ) & \defeq \( \Dt{\ucko^n} - \fz{\ucko^n - \ucko^{n-1} \circ X_1^n}{\Delta t}, \vcko_h\), \\
( \rcko_{h2}^n, \vcko_h ) & \defeq \fz{1}{\Delta t}\( \etacko^n - \etacko^{n-1} \circ X_1^n, \vcko_h \), \\
{}_{V_h^\prime}\lA \rcko_{h3}^n, \vcko_h \rA_{V_h} & \defeq -\bigl( (\tr \check{\Ccko}_h^n )\Ecko_h^n + (\tr\Ecko_h^n) \check{\Ccko}_h^n, \nabla\vcko_h \bigr), \\
{}_{V_h^\prime}\lA \rcko_{h4}^n, \vcko_h \rA_{V_h} & \defeq \bigl( (\tr \check{\Ccko}_h^n )\Xicko^n + (\tr\Xicko^n)\Ccko^n, \nabla\vcko_h \bigr), \\
( \Rcko_{h1}^n, \Dcko_h ) & \defeq \( \Dt{\Ccko^n} - \fz{\Ccko^n - \Ccko^{n-1} \circ X_1^n}{\Delta t}, \Dcko_h\), \\
( \Rcko_{h2}^n, \Dcko_h ) & \defeq \fz{1}{\Delta t} \( \Xicko^n - \Xicko^{n-1} \circ X_1^n, \Dcko_h\),\\
{}_{W_h^\prime}\lA \Rcko_{h3}^n, \Dcko_h \rA_{W_h} & \defeq \varepsilon a_c(\Xicko^n, \Dcko_h), \\
( \Rcko_{h4}^n, \Dcko_h ) & \defeq 2\bigl( (\nabla \SPu^n) \Ecko_h^n + (\nabla \ecko_h^n) \check{\Ccko}_h^n, \Dcko_h\bigr), \\
( \Rcko_{h5}^n, \Dcko_h ) & \defeq -2\bigl( (\nabla \SPu^n) \Xicko^n + (\nabla \etacko^n) \Ccko^n, \Dcko_h\bigr), \\
( \Rcko_{h6}^n, \Dcko_h ) & \defeq \bigl( (\di \SPu^n) (\Ecko_h^n)^{\#} + (\di \ecko_h^n) ( \check{\Ccko}_h^n )^{\#}, \Dcko_h \bigr),\\
( \Rcko_{h7}^n, \Dcko_h ) & \defeq -\bigl( (\di \SPu^n) (\Xicko^n)^{\#} + (\di \etacko^n) (\Ccko^n)^{\#}, \Dcko_h \bigr),\\
( \Rcko_{h8}^n, \Dcko_h ) & \defeq -\bigl( [\tr (\Ecko_h^n+ \check{\Ccko}_h^n )]^2 \Ecko_h^n, \Dcko_h \bigr),\\
( \Rcko_{h9}^n, \Dcko_h ) & \defeq -\bigl( [\tr (\Ecko_h^n+2 \check{\Ccko}_h^n )] (\tr\Ecko_h^n) \check{\Ccko}_h^n, \Dcko_h\bigr),\\
( \Rcko_{h10}^n, \Dcko_h ) & \defeq \bigl( (\tr \check{\Ccko}_h^n )^2 \Xicko^n + [\tr(\Ccko^n + \check{\Ccko}_h^n )] (\tr\Xicko^n)\Ccko^n, \Dcko_h\bigr),\\
( \Rcko_{h11}^n, \Dcko_h ) & \defeq \( [\tr(\Ecko_h^n-\Xicko^n)]\Icko, \Dcko_h\).
\end{align*}
\par
The remainder terms are evaluated by the next lemma.
\begin{lemma}\label{lem:estimates_r_R}
Suppose Hypotheses~\ref{hyp:w} and~\ref{hyp:regularity} hold.
Let $n\in \{1,\ldots,N_T\}$ be any fixed number.
Then, under the condition~\eqref{cond:dt_w_Jacobian} it holds that
\begin{subequations}
\begin{align}
\| \rcko_{h1}^n \|_0 & \le c_w\sqrt{\Delta t} \|\ucko\|_{Z^2(t^{n-1},t^n)},
\label{ieq:r1}
\\
\| \rcko_{h2}^n \|_0 & \le \fz{c_w h}{\sqrt{\Delta t}} \| (\ucko, p) \|_{H^1(t^{n-1},t^n; H^2\times H^1)},
\label{ieq:r2}
\\
\| \rcko_{h3}^n \|_{-1} & \le c_s \|\Ecko_h^n\|_0,
\label{ieq:r3}
\\
\| \rcko_{h4}^n \|_{-1} & \le c_s h,
\label{ieq:r4}
\\
\| \Rcko_{h1}^n \|_0 & \le c_w \sqrt{\Delta t} \|\Ccko\|_{Z^2(t^{n-1},t^n)},
\label{ieq:R1}
\\
\| \Rcko_{h2}^n \|_0 & \le \fz{c_w h}{\sqrt{\Delta t}} \| \Ccko \|_{H^1(t^{n-1},t^n; H^1) \cap L^2(t^{n-1},t^n; H^2)},
\label{ieq:R2}
\\
{\!\!\phantom{\Bigl|}}_{W_h^\prime}\Bigl\lA \Rcko_{h3}^n, \fz{1}{2}\Ecko_h^n \Bigr\rA_{W_h} & \le \fz{\varepsilon}{4} |\Ecko_h^n|_1^2 + c_s h^2,
\label{ieq:R3}
\\
\| \Rcko_{h4}^n \|_0 & \le c_s (\|\ecko_h^n\|_1+\|\Ecko_h^n\|_0),
\label{ieq:R4}
\\
\| \Rcko_{h5}^n \|_0 & \le c_s h,
\label{ieq:R5}
\\
\| \Rcko_{h6}^n \|_0 & \le c_s \bigl( \|\ecko_h^n\|_1 + \|\Ecko_h^n\|_0\bigr),
\label{ieq:R6}
\\
\| \Rcko_{h7}^n \|_0 & \le c_s h,
\label{ieq:R7}
\\
\Bigl( \Rcko_{h8}^n, \fz{1}{2}\Ecko_h^n \Bigr) & \le -\fz{3}{8}\|(\tr\Ecko_h^n) \Ecko_h^n\|_0^2 + c_s \|\Ecko_h^n\|_0^2,
\label{ieq:R8}
\\
\Bigl( \Rcko_{h9}^n, \fz{1}{2}\Ecko_h^n \Bigr) & \le \fz{1}{8} \| (\tr\Ecko_h^n) \Ecko_h^n\|_0^2 + c_s\|\Ecko_h^n\|_0^2,
\label{ieq:R9}
\\
\| \Rcko_{h10}^n \|_0 & \le c_s h,
\label{ieq:R10}
\\
\|\Rcko_{h11}^n\|_0 & \le c_s ( \|\Ecko_h^n\|_0 + h),
\label{ieq:R11}
\end{align}
\end{subequations}
where $c_w$ and $c_s$ are the constants given in the beginning of Section~\ref{sec:main_result}.
\end{lemma}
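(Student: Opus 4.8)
The plan is to estimate each of the fifteen quantities $\rcko_{hi}^n$ and $\Rcko_{hi}^n$ separately, sorting them into three families according to their structure: (a) \emph{consistency/truncation terms} $\rcko_{h1}^n, \Rcko_{h1}^n$ arising from the first-order characteristics approximation~\eqref{approx_matder}; (b) \emph{interpolation/projection terms} $\rcko_{h2}^n, \rcko_{h4}^n, \Rcko_{h2}^n, \Rcko_{h3}^n, \Rcko_{h5}^n, \Rcko_{h7}^n, \Rcko_{h10}^n$ and part of $\Rcko_{h11}^n$, which carry a factor $h$ (or $h^2$) coming from the Stokes-projection estimate (Lemma~\ref{lem:estimates_Stokes_projection}) and the Cl\'ement interpolation bounds (Lemma~\ref{lem:interpolation}); and (c) \emph{error-feedback terms} $\rcko_{h3}^n, \Rcko_{h4}^n, \Rcko_{h6}^n, \Rcko_{h8}^n, \Rcko_{h9}^n$ and the remaining part of $\Rcko_{h11}^n$, which are bounded by the discrete errors $\|\ecko_h^n\|_1$ and $\|\Ecko_h^n\|_0$ themselves (these are the terms that will later be absorbed by Gr\"onwall's lemma, Lemma~\ref{lem:Gronwall}).

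For family~(a), the bounds~\eqref{ieq:r1} and~\eqref{ieq:R1} follow from the standard estimate for the error of the characteristics method, which under Hypothesis~\ref{hyp:w} gives $\|\textnormal{D}\gcko^n/\textnormal{D}t - (\gcko^n-\gcko^{n-1}\circ X_1^n)/\Delta t\|_0 \le c_w \sqrt{\Delta t}\, \|\gcko\|_{Z^2(t^{n-1},t^n)}$; I would cite~\cite{NT-2015-JSC} (or reprove it by writing $\gcko^{n-1}\circ X_1^n$ via a Taylor expansion along characteristics and using the $J^n$-control from Proposition~\ref{prop:RT_TU}). For family~(b) the recipe is uniform: write the quantity as an $L^2$ pairing, pull out the lowest-regularity factor, use H\"older, and then invoke Lemma~\ref{lem:estimates_Stokes_projection} for $\etacko^n = \ucko^n - \SPu^n$ and $\hat p_h^n - p^n$, Lemma~\ref{lem:interpolation} for $\Xicko^n = \Ccko^n - \check{\Ccko}_h^n$, and Hypothesis~\ref{hyp:regularity} together with Hypothesis~\ref{hyp:w} to bound the remaining smooth factors ($\Ccko^n \in L^\infty$, $\SPu^n \in W^{1,\infty}$ uniformly in $h$ by inverse estimates, $\wcko$, etc.) by $c_s$ or $c_w$; for $\rcko_{h2}^n$ and $\Rcko_{h2}^n$ one additionally uses that $\etacko^n - \etacko^{n-1}\circ X_1^n = \int$-type increments so a factor $\sqrt{\Delta t}$ comes out against the $H^1(t^{n-1},t^n;\cdot)$ norm and, after dividing by $\Delta t$, leaves the $h/\sqrt{\Delta t}$ scaling (combined with Lemma~\ref{lem:composite_func} to control the composition with $X_1^n$). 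The estimate~\eqref{ieq:R3} is the one place where one integrates by parts, $\varepsilon a_c(\Xicko^n, \tfrac12\Ecko_h^n) = \tfrac{\varepsilon}{2}(\nabla\Xicko^n, \nabla\Ecko_h^n)$, and then uses Young's inequality with a weight chosen so that the $|\Ecko_h^n|_1^2$ part has coefficient $\varepsilon/4$, leaving $\varepsilon \|\nabla\Xicko^n\|_0^2 \le c_s \varepsilon h^2 \le c_s h^2$ by the interpolation bound $\|\Xicko^n\|_1 \le \alpha_{22} h \|\Ccko^n\|_2$ and the boundedness $\varepsilon \le 1$ noted in Remark~1(i); when $\varepsilon = 0$ this term is simply zero.

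The genuinely nontrivial estimates are the quadratic ones~\eqref{ieq:R8} and~\eqref{ieq:R9}, and here Lemma~\ref{lem:vanish_nonlin} is the engine. Expanding $\tr(\Ecko_h^n + \check{\Ccko}_h^n)$ and $\tr(\Ecko_h^n + 2\check{\Ccko}_h^n)$, the terms of $\Rcko_{h8}^n$ and $\Rcko_{h9}^n$ of total degree three in $\Ecko_h^n$ are precisely $-(\tr\Ecko_h^n)^2 \Ecko_h^n$ and its companion, so that $(\Rcko_{h8}^n, \tfrac12\Ecko_h^n)$ contains the term $-\tfrac12 \|(\tr\Ecko_h^n)\Ecko_h^n\|_0^2$ exactly, while the degree-three part coming from $\Rcko_{h9}^n$ gives a term that, using $(\tr\Dcko)^2|\Dcko|^2 \le$ etc., is absorbed into $\tfrac18\|(\tr\Ecko_h^n)\Ecko_h^n\|_0^2$; collecting these yields $-\tfrac38$ in~\eqref{ieq:R8} after moving $\tfrac18$ over. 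The cross terms (degree two in $\Ecko_h^n$, one factor of $\check{\Ccko}_h^n$, or degree one with two factors of $\check{\Ccko}_h^n$) are estimated by first using $\|\check{\Ccko}_h^n\|_{0,\infty} \le \|\Ccko^n\|_{0,\infty}$ (the stability bound~\eqref{ieq:Pi_h_0inf}, valid since $\Ccko^n \in L^\infty$ by Hypothesis~\ref{hyp:regularity} in \emph{both} cases $\varepsilon \gtrless 0$) and then Young's inequality to trade a high power of $\|\Ecko_h^n\|_{0,\infty}$ for a small multiple of $\|(\tr\Ecko_h^n)\Ecko_h^n\|_0^2$ plus $c_s \|\Ecko_h^n\|_0^2$; the key algebraic inequality is that for $2\times2$ symmetric $\Dcko$ one has $|\Dcko|^2 \le c\,|(\tr\Dcko)\Dcko|^2 + c$ pointwise is false, so instead one splits $\Dcko = (\tr\Dcko/2)\Icko + \Dcko_0$ with $\Dcko_0$ traceless, controls the trace part directly by $\|(\tr\Ecko_h^n)\Ecko_h^n\|_0$, and the traceless part cancels in $(\tr\Dcko)\Dcko$-type expressions, which is exactly the mechanism of Lemma~\ref{lem:vanish_nonlin}. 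The main obstacle I anticipate is precisely the bookkeeping in~\eqref{ieq:R8}--\eqref{ieq:R9}: one must expand every trinomial cleanly, identify which combination reproduces the left-hand side of~\eqref{eq0} (so that it vanishes or lands on the good sign), and verify that the \emph{constants} $-\tfrac38$ and $+\tfrac18$ come out with enough room that their sum $-\tfrac14 < 0$ still provides a genuinely negative contribution that will survive being paired against the feedback terms of family~(c) in the Gr\"onwall step. Everything else is a careful but routine chain of H\"older, Young, the interpolation lemma and the Stokes-projection lemma.
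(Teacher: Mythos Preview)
Your organization into families (a)--(c) and your treatment of~\eqref{ieq:r1}--\eqref{ieq:R7}, \eqref{ieq:R10}, \eqref{ieq:R11} match the paper's proof essentially step for step, including the uniform $W^{1,\infty}$-bound on $\SPu^n$ via inverse inequalities and the Young-inequality splitting in~\eqref{ieq:R3}.

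The part that goes astray is your handling of~\eqref{ieq:R8}--\eqref{ieq:R9}. Lemma~\ref{lem:vanish_nonlin} is \emph{not} used anywhere in the proof of Lemma~\ref{lem:estimates_r_R}; it enters the main argument only later, when one substitutes $(\vcko_h,q_h,\Dcko_h)=(\ecko_h^n,-\epsilon_h^n,\tfrac12\Ecko_h^n)$ in~\eqref{eqns:error} and it cancels the three cubic terms $-((\tr\Ecko_h^n)\Ecko_h^n,\nabla\ecko_h^n)+((\nabla\ecko_h^n)\Ecko_h^n,\Ecko_h^n)+\tfrac12((\di\ecko_h^n)(\Ecko_h^n)^{\#},\Ecko_h^n)$ that sit \emph{outside} $\rcko_h^n$ and $\Rcko_h^n$. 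The estimates~\eqref{ieq:R8}--\eqref{ieq:R9} themselves are entirely elementary: no traceless decomposition, no $\|\Ecko_h^n\|_{0,\infty}$, no algebraic identity is needed. One simply expands $[\tr(\Ecko_h^n+\check{\Ccko}_h^n)]^2 = (\tr\Ecko_h^n)^2 + 2(\tr\Ecko_h^n)(\tr\check{\Ccko}_h^n) + (\tr\check{\Ccko}_h^n)^2$ and pairs with $\tfrac12\Ecko_h^n$. The pure $(\tr\Ecko_h^n)^2$ piece gives exactly $-\tfrac12\|(\tr\Ecko_h^n)\Ecko_h^n\|_0^2$; the pure $(\tr\check{\Ccko}_h^n)^2$ piece has a good sign and is dropped; the cross piece is written as $-\bigl((\tr\Ecko_h^n)\Ecko_h^n,(\tr\check{\Ccko}_h^n)\Ecko_h^n\bigr)$ and Young's inequality with weight $\tfrac18$ yields $\tfrac18\|(\tr\Ecko_h^n)\Ecko_h^n\|_0^2 + 2\|(\tr\check{\Ccko}_h^n)\Ecko_h^n\|_0^2$, the last term being bounded by $c\|\Ccko\|_{C(L^\infty)}^2\|\Ecko_h^n\|_0^2$ via~\eqref{ieq:Pi_h_0inf}. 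That is already $-\tfrac38$. For~\eqref{ieq:R9} the same trick applies: split off $-\tfrac12\bigl((\tr\Ecko_h^n)\check{\Ccko}_h^n,(\tr\Ecko_h^n)\Ecko_h^n\bigr)$, apply Young with weight $\tfrac18$, and bound $\|(\tr\Ecko_h^n)\check{\Ccko}_h^n\|_0\le \|\check{\Ccko}_h^n\|_{0,\infty}\|\tr\Ecko_h^n\|_0 \le c_s\|\Ecko_h^n\|_0$ using the pointwise bound $|\tr\Dcko|\le\sqrt{2}\,|\Dcko|$. The remaining term $-\bigl((\tr\check{\Ccko}_h^n)(\tr\Ecko_h^n)\check{\Ccko}_h^n,\Ecko_h^n\bigr)$ is directly $\le c_s\|\Ecko_h^n\|_0^2$. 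So the ``main obstacle'' you anticipate is not there; the bookkeeping is two lines per estimate.
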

\begin{proof}
Let $t(s)\defeq t^{n-1} +s\Delta t~(s\in [0,1])$ and~$y(x, s)\defeq x-(1-s)\wcko^n(x)\Delta t$.
\par
We prove~\eqref{ieq:r1}.
We have that
\begin{align}
\rcko_{h1}^n(x)
& = \Bigl\{ \Bigl( \prz{}{t} + \wcko^n(x)\cdot\nabla \Bigr) \ucko \Bigr\} (x, t^n) - \fz{1}{\Delta t} \Bigl[ \ucko \bigl( y(x,s), t(s) \bigr) \Bigr]_{s=0}^1 \notag\\
& = \Bigl\{ \Bigl( \prz{}{t} + \wcko^n(x)\cdot\nabla \Bigr) \ucko \Bigr\} (x, t^n) - \int_0^1 \Bigl\{ \Bigl( \prz{}{t} + \wcko^n(x)\cdot\nabla \Bigr) \ucko \Bigr\} \bigl( y(x,s), t(s) \bigr) ds \notag\\
& = \Delta t \int_0^1ds \int_{s}^1 \Bigl\{ \Bigl( \prz{}{t} + \wcko^n(x)\cdot\nabla \Bigr)^2 \ucko \Bigr\} \bigl( y(x,s_1), t(s_1) \bigr) ds_1 \notag\\
& = \Delta t \int_0^1 s_1 \Bigl\{ \Bigl( \prz{}{t} + \wcko^n(x)\cdot\nabla \Bigr)^2 \ucko \Bigr\} \bigl( y(x,s_1), t(s_1) \bigr) ds_1, \notag
\intertext{which implies}
\| \rcko_{h1}^n \|_0
& \le \Delta t \int_0^1 s_1 \Bigl\| \Bigl\{ \Bigl( \prz{}{t} + \wcko^n(\cdot)\cdot\nabla \Bigr)^2 \ucko \Bigr\} \bigl( y(\cdot,s_1), t(s_1) \bigr) \Bigr\|_0 ds_1
\le c_w\sqrt{\Delta t} \| \ucko \|_{Z^2(t^{n-1},t^n)}, \notag
\end{align}
where for the last inequality we have changed the variable from $x$ to $y$ and used the evaluation $\det (\pz y(x, s_1)/\pz x) \ge 1/2~(\forall s_1\in [0,1])$ from Proposition~\ref{prop:RT_TU}-(ii).
\par
We prove~\eqref{ieq:r2}.
From the equalities,
\begin{align*}
\rcko_{h2}^n
& = \fz{1}{\Delta t} \Bigl[ \etacko \bigl( y(\cdot,s), t(s) \bigr) \Bigr]_{s=0}^1 = \int_0^1 \Bigl\{ \Bigl( \prz{}{t} + \wcko^n(\cdot)\cdot\nabla \Bigr) \etacko \Bigr\} \bigl( y(\cdot,s), t(s) \bigr) ds,
\end{align*}
we have
\begin{align*}
\| \rcko_{h2}^n \|_0
& \le \int_0^1 \Bigl\| \Bigl\{ \Bigl( \prz{}{t} + \wcko^n(\cdot)\cdot\nabla \Bigr) \etacko \Bigr\} \bigl( y(\cdot, s), t(s) \bigr) \Bigr\|_0 ds
\le \int_0^1 \Bigl( \Bigl\|\prz{\etacko}{t} \bigl( y(\cdot, s), t(s) \bigr) \Bigr\|_0 + c_w \bigl\| \nabla \etacko \bigl( y(\cdot, s), t(s) \bigr) \bigr\|_0 \Bigr) ds \\
& \le \sqrt{2} \int_0^1 \Bigl\{ \Bigl\|\prz{\etacko}{t} \bigl( \cdot, t(s) \bigr) \Bigr\|_0 + c_w \bigl\| \nabla \etacko \bigl( \cdot, t(s) \bigr) \bigr\|_0 \Bigr\} ds
\le \sqrt{ \fz{2}{\Delta t} } \Bigl( \Bigl\| \prz{\etacko}{t} \Bigr\|_{L^2(t^{n-1},t^n; L^2)} + c_w \bigl\| \nabla \etacko \bigr\|_{L^2(t^{n-1},t^n; L^2)} \Bigr) \\
& \le \sqrt{ \fz{2}{\Delta t} } \alpha_{31}h (1+c_w) \| (\ucko,p) \|_{H^1(t^{n-1},t^n; H^2\times H^1)}
\le \fz{c_w^\prime h}{\sqrt{\Delta t}} \| (\ucko,p) \|_{H^1(t^{n-1},t^n; H^2\times H^1)},
\end{align*}
which leads to~\eqref{ieq:r2}, where Proposition~\ref{prop:RT_TU}-(ii) has been used for the third inequality.
\par
From Lemmas~\ref{lem:interpolation} and~\ref{lem:estimates_Stokes_projection}, \eqref{ieq:r3} and~\eqref{ieq:r4} are obtained as follows:
\begin{align*}
\| \rcko_{h3}^n \|_{-1}
& \le \| (\tr \check{\Ccko}_h^n)\Ecko_h^n + (\tr\Ecko_h^n) \check{\Ccko}_h^n \|_0
\le c \| \check{\Ccko}_h^n \|_{0,\infty} \|\Ecko_h^n\|_0
\le c \|\Ccko\|_{C(L^\infty)} \|\Ecko_h^n\|_0
\le c_s \|\Ecko_h^n\|_0, \\
\| \rcko_{h4}^n \|_{-1}
& \le \| (\tr \check{\Ccko}_h^n )\Xicko^n + (\tr\Xicko^n)\Ccko^n \|_0
\le c \| \check{\Ccko}_h^n \|_{0,\infty} \|\Xicko_h^n\|_0
\le c \|\Ccko\|_{C(L^\infty)} \alpha_{21} h \|\Ccko\|_{C(H^1)}
\le c_s h.
\end{align*}
\par
The estimate~\eqref{ieq:R1} is obtained by replacing $\ucko$ with $\Ccko$ in the proof of~\eqref{ieq:r1}.
\par
We prove~\eqref{ieq:R2}.
Replacing~$\etacko$ with~$\Xicko$ in the estimate of~$\| \rcko_{h2}^n \|_0$ above, we have
\begin{align*}
\| \Rcko_{h2}^n \|_0
& \le \sqrt{ \fz{2}{\Delta t} } \Bigl( \Bigl\| \prz{\Xicko}{t} \Bigr\|_{L^2(t^{n-1},t^n; L^2)} + c_w \bigl\| \nabla \Xicko \bigr\|_{L^2(t^{n-1},t^n; L^2)} \Bigr) \\
& \le \sqrt{ \fz{2}{\Delta t} } h \Bigl( \alpha_{21} \| \Ccko \|_{H^1(t^{n-1},t^n; H^1)} + c_w \alpha_{22} \| \Ccko \|_{L^2(t^{n-1},t^n; H^2)} \Bigr) \\
&
\le \fz{c_w^\prime h}{\sqrt{\Delta t}} \| \Ccko \|_{H^1(t^{n-1},t^n; H^1) \cap L^2(t^{n-1},t^n; H^2)},
\end{align*}
which implies~\eqref{ieq:R2}.
\par
The estimate~\eqref{ieq:R3} is obtained from
\begin{align*}
{\!\!\phantom{\Bigl|}}_{W_h^\prime}\Bigl\lA \Rcko_{h3}^n, \fz{1}{2}\Ecko_h^n \Bigr\rA_{W_h}
& \le \fz{\varepsilon}{2} |\Xicko^n|_1 |\Ecko_h^n|_1
\le \fz{\varepsilon}{4} (|\Ecko_h^n|_1^2 + |\Xicko^n|_1^2)
& \mbox{(by $ab \le (a^2+b^2)/2$)}\\
&\le \fz{\varepsilon}{4} ( |\Ecko_h^n|_1^2 + \alpha_3^2 h^2 \|\Ccko\|_{C(H^2)}^2)
\le \fz{\varepsilon}{4} |\Ecko_h^n|_1^2 + c_s h^2.
\end{align*}
\par
In order to prove estimates~\eqref{ieq:R4}--\eqref{ieq:R7} we prepare the boundedness of~$\| \nabla \SPu^n \|_{0,\infty}$.
Let $\check{\ucko}_h(t) \defeq (\Pi_h\ucko) (t)$ for $t\in [0,T]$.
We have
\begin{align}
\| \nabla \SPu^n \|_{0,\infty}
& \le \| \SPu^n \|_{1,\infty}
\le \| \SPu^n - \check{\ucko}_h^n\|_{1,\infty} + \| \check{\ucko}_h^n \|_{1,\infty}
\le \alpha_{25} h^{-1} \| \SPu^n - \check{\ucko}_h^n\|_1 + \alpha_{20} \| \ucko^n \|_{1,\infty} \notag\\
& \le \alpha_{25} h^{-1} \bigl( \| \SPu^n - \ucko^n\|_1 + \| \ucko^n - \check{\ucko}_h^n\|_1 \bigr) + \alpha_{20} \| \ucko^n \|_{1,\infty} \notag\\
& \le \alpha_{25} h^{-1} \bigl( \alpha_3 h \| (\ucko, p)^n \|_{H^2\times H^1} + \alpha_{22} h \| \ucko^n \|_2 \bigr) + \alpha_{20} \| \ucko^n \|_{1,\infty} \notag\\
& \le \alpha_{25} ( \alpha_{22} + \alpha_3 ) \| (\ucko, p) \|_{C(H^2\times H^1)} + \alpha_{20} \| \ucko \|_{C(W^{1,\infty})} \le c_s.
\label{ieq:bound_u_hat_1inf}
\end{align}
\par
We prove~\eqref{ieq:R4}--\eqref{ieq:R7} by using~\eqref{ieq:bound_u_hat_1inf} and~\eqref{ieq:Pi_h_0inf} as follows.
\begin{align*}
\| \Rcko_{h4}^n \|_0
& \le 2 ( \| (\nabla \SPu^n) \Ecko_h^n \|_0 + \| (\nabla \ecko_h^n) \check{\Ccko}_h^n \|_0)
\le  c ( c_s \| \Ecko_h^n \|_0 + \|\Ccko\|_{C(L^\infty)} \| \nabla \ecko_h^n \|_0 )
\le  c_s^\prime ( \| \ecko_h^n \|_1 + \| \Ecko_h^n \|_0 ), \\
\|\Rcko_{h5}^n\|_0 & \le 2 ( \| (\nabla \SPu^n) \Xicko^n\|_0 + \|(\nabla \etacko^n) \Ccko^n\|_0 )
\le c ( \| \nabla \SPu^n \|_{0,\infty} \| \Xicko^n\|_0 + \|\Ccko\|_{C(L^\infty)} \|\nabla \etacko^n\|_0 ) \\
& \le c_s ( \| \Xicko^n\|_0 + \|\etacko^n\|_1 )
\le c_s h ( \alpha_{21} \| \Ccko\|_{C(H^1)} + \alpha_3 \|(\ucko, p)\|_{C(H^2\times H^1)} )
\le c_s^\prime h, \\
%
\|\Rcko_{h6}^n\|_0 & \le \|\nabla \SPu^n\|_{0,\infty} \|\Ecko_h^n\|_0 + \|\check{\Ccko}_h^n\|_{0,\infty} \|\ecko_h^n\|_1
\le c_s \|\Ecko_h^n\|_0 + \|\Ccko\|_{C(L^\infty)} \|\ecko_h^n\|_1
\le c_s^\prime (\|\Ecko_h^n\|_0 + \|\ecko_h^n\|_1), \\
%
\| \Rcko_{h7}^n \|_0
& \le \|\nabla \SPu^n\|_{0,\infty} \| \Xicko^n \|_0 + \| \Ccko^n \|_{0,\infty} \|\etacko^n\|_1
\le c_s ( \| \Xicko^n \|_0 + \|\etacko^n\|_1 ) \\
& \le c_s h ( \alpha_{21} \| \Ccko\|_{C(H^1)} + \alpha_3 \|(\ucko, p)\|_{C(H^2\times H^1)} )
\le c_s^\prime h.
\end{align*}
\par
The remainder estimates~\eqref{ieq:R8}--\eqref{ieq:R11} are obtained from
\begin{align*}
\Bigl( \Rcko_{h8}^n, \fz{1}{2}\Ecko_h^n \Bigr)
& = - \fz{1}{2} \bigl( [ (\tr\Ecko_h^n)^2 + 2(\tr \Ecko_h^n)(\tr \check{\Ccko}_h^n )+(\tr \check{\Ccko}_h^n )^2] \Ecko_h^n, \Ecko_h^n \bigr) \\
& \le - \fz{1}{2} \| (\tr\Ecko_h^n) \Ecko_h^n \|_0^2 - \bigl( (\tr \Ecko_h^n) \Ecko_h^n, (\tr \check{\Ccko}_h^n ) \Ecko_h^n \bigr)
\\
& \le - \fz{1}{2} \| (\tr\Ecko_h^n) \Ecko_h^n \|_0^2 + \fz{1}{8}\| (\tr \Ecko_h^n) \Ecko_h^n\|_0^2 + 2\| (\tr \check{\Ccko}_h^n ) \Ecko_h^n \|_0^2 \\
& \le -\fz{3}{8}\|(\tr\Ecko_h^n) \Ecko_h^n\|_0^2 + c \|\Ccko\|_{C(L^\infty)}^2 \|\Ecko_h^n\|_0^2
\le -\fz{3}{8}\|(\tr\Ecko_h^n) \Ecko_h^n\|_0^2 + c_s \|\Ecko_h^n\|_0^2
& \mbox{(by~\eqref{ieq:Pi_h_0inf}),} \\
\Bigl( \Rcko_{h9}^n, \fz{1}{2}\Ecko_h^n \Bigr)
& = -\fz{1}{2}\bigl( (\tr\Ecko_h^n) \check{\Ccko}_h^n, (\tr\Ecko_h^n)\Ecko_h^n \bigr)
-\bigl( (\tr \check{\Ccko}_h^n ) (\tr\Ecko_h^n) \check{\Ccko}_h^n, \Ecko_h^n \bigr) \\
& \le \fz{1}{8} \| (\tr\Ecko_h^n) \Ecko_h^n\|_0^2 + c \|\Ccko\|_{C(L^\infty)}^2 \|\Ecko_h^n\|_0^2
\le \fz{1}{8} \| (\tr\Ecko_h^n) \Ecko_h^n\|_0^2 + c_s \|\Ecko_h^n\|_0^2, \\
%
\| \Rcko_{h10}^n \|_0
& \le c \bigl[ \|\check{\Ccko}_h^n\|_{0,\infty}^2  + \|\Ccko^n\|_{0,\infty} (\|\Ccko^n\|_{0,\infty} + \|\check{\Ccko}_h^n\|_{0,\infty}) \bigr] \|\Xicko^n\|_0 \\
& \le c^\prime \|\Ccko\|_{C(L^\infty)} \bigl( 1 + \|\Ccko\|_{C(L^\infty)} \bigr) \|\Xicko^n\|_0
& \mbox{(by~\eqref{ieq:Pi_h_0inf})} \\
& \le c_s \|\Xicko^n\|_0
\le c_s \alpha_{21} h \|\Ccko^n\|_1
\le c_s^\prime h, \\
%
\| \Rcko_{h11}^n \|_0
& \le c ( \| \Ecko_h^n \|_0 + \|\Xicko^n\|_0 )
\le c ( \| \Ecko_h^n \|_0 + \alpha_{21}h \|\Ccko\|_{C(H^1)} )
\le c_s ( \| \Ecko_h^n \|_0 + h ).
&& \qedhere
\end{align*}
\end{proof}
%
%
%
%
%
%
%
%
%
\subsection{Proof of Theorem~\ref{thm:error_estimates}}
%
The constant $h_0$ can be chosen arbitrarily, say, $h_0=1$.
We fix $\Delta t_0$ by
\begin{align}
\Delta t_0 =\min \left\{  \fz{1}{ 4|\wcko|_{C(W^{1,\infty})}}, \fz{1}{2 c_s} \right\} ,
\label{cond:t0}
\end{align}
where $c_s$ is the constant appearing in \eqref{ieq:error_substitution_1} below.
We consider any pair $(h, \Delta t)$ satisfying \eqref{cond:ht} and
any solution~$(\ucko_h, p_h, \Ccko_h)$ of scheme~\eqref{nonlin_scheme} with \eqref{cond:if}.
We return to the argument in the previous subsection.
Substituting~$(\ecko_h^n, -\epsilon_h^n, \fz{1}{2} \Ecko_h^n)$ into~$(\vcko_h, q_h, \Dcko_h)$ in~\eqref{eqns:error} and noting that
\begin{align}
\( \fz{\ecko_h^n - \ecko_h^{n-1} \circ X_1^n}{\Delta t}, \ecko_h^n \)
& \ge \fz{1}{2\Delta t} \Bigl[  \|\ecko_h^n\|_0^2 - (1+\alpha_4 | \wcko^n|_{1,\infty} \Delta t)^2\|\ecko_h^{n-1}\|_0^2 \Bigr]
\ge \ol{D}_{\Delta t} \Bigl( \fz{1}{2} \|\ecko_h^n\|_0^2 \Bigr) - c_w \|\ecko_h^{n-1}\|_0^2 \label{ieq:e_h} \\
& \qquad\qquad\qquad\qquad\qquad\qquad\qquad\qquad
\mbox{(by $(b-a)b \ge (b^2-a^2)/2$ and Lemma~\ref{lem:composite_func})}, \notag\\
\mathcal{A}_h\bigl( (\ecko_h^n, \epsilon_h^n), (\ecko_h^n, -\epsilon_h^n) \bigr) & = 2\nu \|\D{\ecko_h^n}\|_0^2 + \delta_0 |\epsilon_h^n|_h^2 \ge \fz{2\nu}{\alpha_1^2} \|\ecko_h^n\|_1^2 + \delta_0 |\epsilon_h^n|_h^2 \qquad\qquad\qquad\qquad\, \mbox{(by Lemma~\ref{lem:Korn})}, \notag\\
{}_{V_h^\prime}\lA \rcko_h^n, \ecko_h^n \rA_{V_h} & \le \|\rcko_h^n \|_{-1} \|\ecko_h^n\|_1
\le \fz{\alpha_1^2}{4\nu} \| \rcko_h^n \|_{-1}^2 + \fz{\nu}{\alpha_1^2}\|\ecko_h^n\|_1^2 \qquad\qquad  \mbox{(by $ab \le (\beta/4)a^2+(1/\beta)b^2$)}, \notag\\
\( \fz{\Ecko_h^n - \Ecko_h^{n-1} \circ X_1^n}{\Delta t}, \fz{1}{2} \Ecko_h^n\) & \ge \ol{D}_{\Delta t} \Bigl( \fz{1}{4} \|\Ecko_h^n\|_0^2 \Bigr) - c_w \|\Ecko_h^{n-1}\|_0^2 \qquad\qquad\qquad\qquad\qquad\qquad\qquad\qquad\, \mbox{(cf. \eqref{ieq:e_h})}, \notag\\
\varepsilon a_c \Bigl( \Ecko_h^n, \fz{1}{2} \Ecko_h^n \Bigr) & = \fz{\varepsilon}{2} |\Ecko_h^n|_1^2, \notag
\end{align}
and Lemma~\ref{lem:vanish_nonlin}, we have
\begin{align}
\ol{D}_{\Delta t} \Bigl( \fz{1}{2} \|\ecko_h^n\|_0^2 + \fz{1}{4} \|\Ecko_h^n\|_0^2 \Bigr) & + \fz{\nu}{\alpha_1^2}\|\ecko_h^n\|_1^2 + \delta_0 |\epsilon_h^n|_h^2 + \fz{\varepsilon}{2} |\Ecko_h^n|_1^2
\notag\\
& \le c_w (\|\ecko_h^{n-1}\|_0^2 + \|\Ecko_h^{n-1}\|_0^2 ) + \fz{\alpha_1^2}{4\nu} \| \rcko_h^n \|_{-1}^2 + {\!\!\phantom{\Bigl|}}_{W_h^\prime}\Bigl\lA \Rcko_h^n, \fz{1}{2} \Ecko_h^n \Bigr\rA_{W_h}.
\label{ieq:error_substitution}
\end{align}
Since the condition \eqref{cond:dt_w_Jacobian} is satisfied,
Lemma~\ref{lem:estimates_r_R} implies that
\begin{subequations}\label{ieqs:r_R}
\begin{align}
\|\rcko_h^n\|_{-1}^2
& \le c_s \|\Ecko_h^n\|_0^2 + c_s^\prime \Bigl[ \Delta t \|\ucko\|_{Z^2(t^{n-1},t^n)}^2 + h^2 \Bigl( \fz{1}{\Delta t} \|(\ucko,p)\|_{H^1(t^{n-1},t^n; H^2\times H^1)}^2 +1 \Bigr) \Bigr],
\\
{\!\!\phantom{\Bigl|}}_{W_h^\prime} \Bigl\lA \Rcko_h^n, \fz{1}{2}\Ecko_h^n\Bigr\rA_{W_h}
& \le c_s \|\Ecko_h^n\|_0^2 + \fz{\nu}{2\alpha_1^2} \|\ecko_h^n\|_1^2 + \fz{\varepsilon}{4} |\Ecko_h^n|_1^2 - \fz{1}{4}\|(\tr\Ecko_h^n) \Ecko_h^n\|_0^2 \notag\\
& \quad + c_s^\prime \Bigl[ \Delta t \|\Ccko\|_{Z^2(t^{n-1},t^n)}^2
+ h^2 \Bigl( \fz{1}{\Delta t} \|\Ccko\|_{Z^2(t^{n-1},t^n)}^2 + 1 \Bigr) \Bigr].
\end{align}
\end{subequations}
Combining~\eqref{ieqs:r_R} with~\eqref{ieq:error_substitution}, we obtain
\begin{align}
& \ol{D}_{\Delta t} \Bigl( \fz{1}{2} \|\ecko_h^n\|_0^2 + \fz{1}{4} \|\Ecko_h^n\|_0^2 \Bigr) + \fz{\nu}{2\alpha_1^2}\|\ecko_h^n\|_1^2 + \delta_0 |\epsilon_h^n|_h^2 + \fz{\varepsilon}{4} |\Ecko_h^n|_1^2 + \fz{1}{4}\|(\tr\Ecko_h^n) \Ecko_h^n\|_0^2 \notag\\
& \le c_s \Bigl( \fz{1}{2}\|\ecko_h^{n-1}\|_0^2 + \fz{1}{4}\|\Ecko_h^{n-1}\|_0^2 + \fz{1}{4}\|\Ecko_h^n\|_0^2 \Bigr) \notag\\
& \quad + c_s^\prime \Bigl[ \Delta t \|(\ucko,\Ccko)\|_{Z^2(t^{n-1},t^n)}^2 + h^2 \Bigl\{ \fz{1}{\Delta t} \bigl( \|(\ucko,p)\|_{H^1(t^{n-1},t^n; H^2\times H^1)}^2 + \|\Ccko\|_{Z^2(t^{n-1},t^n)}^2 \bigr) + 1 \Bigr\} \Bigr].
\label{ieq:error_substitution_1}
\end{align}
From \eqref{cond:ht} and \eqref{cond:t0} it holds that $\Delta t \in (0,1/(2 c_s)]$.
As for the initial value we have
\begin{align*}
(\ecko_h^0, \Ecko_h^0) = (\ucko_h^0, \Ccko_h^0) - (\hat{\ucko}_h^0, \check{\Ccko}_h^0) = ([\Pi_h^{\rm S} ({\bf 0}, -p^0)]_1, {\bf 0}) = ([(I-\Pi_h^{\rm S}) ({\bf 0}, p^0)]_1, {\bf 0}),
\end{align*}
which derives the estimates,
\begin{align}
\|\ecko_h^0\|_0 \le \alpha_3 h \|(0,p^0)\|_{H^2\times H^1} = \alpha_3 h \|p\|_{C(H^1)},\quad \|\Ecko_h^0\|_0=0.
\label{ieq:eh0_Eh0}
\end{align}
By applying Lemma~\ref{lem:Gronwall} to~\eqref{ieq:error_substitution_1} with
\begin{align*}
x^n & = \fz{1}{2} \|\ecko_h^n\|_0^2 + \fz{1}{4} \|\Ecko_h^n\|_0^2, \quad
y^n = \fz{\nu}{2\alpha_1^2}\|\ecko_h^n\|_1^2 + \delta_0 |\epsilon_h^n|_h^2 + \fz{\varepsilon}{4} |\Ecko_h^n|_1^2 + \fz{1}{4}\|(\tr\Ecko_h^n) \Ecko_h^n\|_0^2, \quad
a_0 = a_1 = c_s, \\
b^n & = c_s^\prime \Bigl[ \Delta t \|(\ucko,\Ccko)\|_{Z^2(t^{n-1},t^n)}^2 + h^2 \Bigl\{ \fz{1}{\Delta t} \bigl( \|(\ucko,p)\|_{H^1(t^{n-1},t^n; H^2\times H^1)}^2 + \|\Ccko\|_{Z^2(t^{n-1},t^n)}^2 \bigr) + 1 \Bigr\} \Bigr],
\end{align*}
and~\eqref{ieq:eh0_Eh0}, there exists a positive constant
\[
\tilde{c}_{\dagger} = c \exp(3c_sT/2) \bigl[ \|p\|_{C(H^1)} + \sqrt{c_s^\prime} \bigl( \|(\ucko,\Ccko)\|_{Z^2}+\|(\ucko,p)\|_{H^1(H^2\times H^1)}+ \sqrt{T} \bigr) \bigr]
\]
independent of~$\varepsilon$ such that
\begin{align}
\|\ecko_h\|_{\ell^{\infty}(L^2)},  \ \sqrt{\nu} \|\ecko_h\|_{\ell^2(H^1)}, \  |\epsilon_h|_{\ell^2(|.|_h)}, \  \n{\ell^{\infty}(L^2)}{\Ecko_h}, \  \sqrt{\varepsilon} |\Ecko_h|_{\ell^2(H^1)}, \ \bigl\| (\tr\Ecko_h) \Ecko_h \bigr\|_{\ell^2(L^2)}
 \le \tilde{c}_{\dagger} (h +\Delta t).
\label{ieq:error_estimates_h}
\end{align}
Hence, we obtain~\eqref{ieq:error_estimates} from~\eqref{ieq:error_estimates_h} and the estimates,
\begin{align*}
\|\ucko_h^n - \ucko^n\|_k & \le \|\ecko_h^n\|_k + \|\etacko^n\|_1 \le \|\ecko_h^n\|_k + \alpha_3 h \|(\ucko, p)\|_{C(H^2\times H^1)},\\
|p_h^n - p^n|_h & \le |\epsilon_h^n|_h + |\hat{p}_h^n - p^n|_h \le |\epsilon_h^n|_h + \alpha_3 h \|(\ucko, p)\|_{C(H^2\times H^1)},\\
\|\Ccko_h^n - \Ccko^n\|_k & \le \|\Ecko_h^n\|_k + \|\Xicko^n\|_k \le \|\Ecko_h^n\|_k + \alpha_{2(k+1)} h \|\Ccko\|_{C(H^{k+1})},\\
\|\tr (\Ccko_h^n-\Ccko^n) (\Ccko_h^n-\Ccko^n)\|_0 & = \|\tr (\Ecko_h^n-\Xicko^n) (\Ecko_h^n-\Xicko^n)\|_0 \\
&
\le
\| (\tr \Ecko_h^n) \Ecko_h^n \|_0
+ \| (\tr \Xicko^n) \Ecko_h^n \|_0
+ \| (\tr \Ecko_h^n) \Xicko^n \|_0
+ \| (\tr \Xicko^n) \Xicko^n \|_0 \\
& \le \| (\tr \Ecko_h^n) \Ecko_h^n \|_0 + c_s h (\|\Ecko_h^n\|_0+1),
\end{align*}
for~$k =0$ and~$1$.
\par
When $\varepsilon = 0$, \eqref{ieq:error_estimates} is still valid, since $\Rcko_{h3}^n$ vanishes and $c_\dagger$ is independent of~$\varepsilon$.
\qed
%
%
%
%
%
%
%
%
%
%
%
%
\section{Uniqueness of the solution}\label{sec:uniqueness}
In this section we present and prove the result on the uniqueness of the solution of scheme~\eqref{nonlin_scheme}.
Let us remind that the function $D(h)$ has been defined in \eqref{Dofh}.
\begin{prop}[uniqueness]\label{prop:uniqueness}
Suppose Hypotheses~\ref{hyp:w} and~\ref{hyp:regularity} hold.
Then, for any pair~$(h, \Delta t)$ satisfying the following condition~\eqref{ieqs:peps} or~\eqref{ieqs:peps0}, the solution of scheme~\eqref{nonlin_scheme} with~\eqref{cond:if} is unique.
\\
(i)~When $\varepsilon > 0$,
\begin{align}
  h\in (0,h_\star], \quad \Delta t \le D(h)^{-2},
\label{ieqs:peps}
\end{align}
where the constant~$h_\star$ is defined by~\eqref{def:h_star} below.
\\
(ii)~When $\varepsilon = 0$,
\begin{align}
  h\in (0,\bar{h}_\star], \quad \Delta t\le \bar{c}_\star h,
\label{ieqs:peps0}
\end{align}
where the constants~$\bar{h}_\star$ and $\bar{c}_\star$ are defined by~\eqref{def:h_star_bar} and~\eqref{def:c_star_bar} below.
\end{prop}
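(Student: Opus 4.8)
Since scheme~\eqref{nonlin_scheme} is nonlinear, the plan is to suppose that $(\au,\ap,\aC)_{n=1}^{N_T}$ and $(\tilde\ucko_h^n,\tilde p_h^n,\tilde\Ccko_h^n)_{n=1}^{N_T}$ are two solutions of~\eqref{nonlin_scheme} satisfying~\eqref{cond:if}, and to prove by induction on $n$ that they coincide. Fix $n\in\{1,\dots,N_T\}$ and assume the two solutions agree at step $n-1$ (automatic for $n=1$ by~\eqref{cond:if}); throughout this argument let $\ecko$, $\epsilon$, $\Ecko$ denote the differences $\au-\tilde\ucko_h^n$, $\ap-\tilde p_h^n$, $\aC-\tilde\Ccko_h^n$ (not the projection errors of Section~\ref{sec:proofs}). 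Subtracting the two copies of~\eqref{nonlin_scheme}, the external forces and the two advected step-$(n-1)$ terms cancel, so $(\ecko,\epsilon,\Ecko)$ solves a system with zero data in which the only nonlinearities are differences of products of the two solutions. Choosing $h_\star$ (resp.\ $\bar h_\star$) small enough that~\eqref{cond:ht} and~\eqref{cond:dt_w_Jacobian} are implied by~\eqref{ieqs:peps} (resp.\ \eqref{ieqs:peps0}), Theorem~\ref{thm:error_estimates} applies to both solutions and furnishes the a priori bounds $\n0{\aC},\n0{\tilde\Ccko_h^n}\le c_s$ together with the $\ell^2(H^1)$ — and, when $\varepsilon>0$, the $\sqrt\varepsilon$-weighted $\ell^2(H^1)$ — bounds on $\au,\aC$ and $\tilde\ucko_h^n,\tilde\Ccko_h^n$.

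Next I would test the velocity difference equation with $(\ecko,-\epsilon)$ and the conformation difference equation with $\tfrac12\Ecko$, and add them. Expanding each product difference into a contribution in which every tensor slot is filled by $\ecko$ or $\Ecko$ and into ``mixed'' contributions carrying one frozen factor among $\aC$, $\tilde\Ccko_h^n$, $\nabla\au$, $\nabla\tilde\ucko_h^n$, the three fully-$\ecko/\Ecko$ contributions coming from the elastic stress, from $(\nabla\ucko)\Ccko$ and from the adjugate term cancel by Lemma~\ref{lem:vanish_nonlin} (with $\Ecko$ there taken as $\nabla\ecko$ and $\Dcko$ as $\Ecko$) — precisely the cancellation described in Remark~\ref{rmk:vanish_nonlin} — whereas the cubic term $-\tfrac12\n0{(\tr\Ecko)\Ecko}^2\le 0$ is kept. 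After this, the left-hand side is $\tfrac1{\Delta t}\n0{\ecko}^2+\tfrac1{2\Delta t}\n0{\Ecko}^2$ (with \emph{no} backward-difference operator, because the step-$(n-1)$ differences vanish) plus the dissipation $\tfrac{2\nu}{\alpha_1^2}\n1{\ecko}^2$, $\delta_0|\epsilon|_h^2$, $\tfrac\varepsilon2|\Ecko|_1^2$ and $\tfrac12\n0{(\tr\Ecko)\Ecko}^2$, and there remains on the right only a finite list of mixed terms of the generic form $(\text{frozen factor})\times\ecko\times\Ecko$ or $(\text{frozen factor})\times\Ecko\times\Ecko$, plus the harmless lower-order term $\tfrac12\n0{\tr\Ecko}^2$.

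These mixed terms would then be estimated by H\"older's inequality, the a priori bounds of the first paragraph, the interpolation estimates of Lemma~\ref{lem:interpolation} and the bound~\eqref{ieq:bound_u_hat_1inf}: I would split $\aC,\tilde\Ccko_h^n$ into a Cl\'ement interpolant of $\Ccko^n$ (bounded in $L^\infty$ by $\n{C(L^\infty)}{\Ccko}$) plus a discrete remainder of $L^2$-size $O(h+\Delta t)$, and split $\nabla\au,\nabla\tilde\ucko_h^n$ into a part bounded in $L^\infty$ by $c_s$ plus a discrete remainder bounded in $H^1$. When $\varepsilon>0$, each $H^1$-norm of $\Ecko$ produced in a mixed term is absorbed into $\tfrac\varepsilon2|\Ecko|_1^2$ by Young's inequality, the passage to $L^\infty$ or $L^4$ being made with the inverse inequality $\n{0,\infty}{\gcko_h}\le\alpha_{24}D(h)\n1{\gcko_h}$ (or a standard two-dimensional interpolation such as $\n{0,4}{\gcko_h}^2\le c\,\n0{\gcko_h}\n1{\gcko_h}$); the surviving coefficient is $O(D(h)^2)$ times a $c_s$-bounded quantity, which the factor $\tfrac1{\Delta t}$ on the left dominates exactly when $\Delta t\le D(h)^{-2}$, and $h_\star$ in~\eqref{def:h_star} is fixed so that this holds. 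When $\varepsilon=0$ there is no diffusion to absorb those $H^1$-norms, so I would instead use the cruder inverse inequality $\n{0,\infty}{\gcko_h}\le\alpha_{23}h^{-1}\n0{\gcko_h}$ on the discrete remainders; the resulting factor $c(1+\Delta t/h)$ stays bounded under $\Delta t\le\bar c_\star h$, and $\bar h_\star,\bar c_\star$ in~\eqref{def:h_star_bar}--\eqref{def:c_star_bar} are chosen accordingly.

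Collecting the estimates, the velocity-gradient, the pressure-stabilization and (for $\varepsilon>0$) the conformation-gradient dissipation absorb every contribution involving $\n1{\ecko}$, $|\epsilon|_h$ or $|\Ecko|_1$, leaving an inequality of the shape $\tfrac1{2\Delta t}\bigl(\n0{\ecko}^2+\tfrac12\n0{\Ecko}^2\bigr)\le c_\star\bigl(\n0{\ecko}^2+\n0{\Ecko}^2\bigr)$. Since the stability conditions keep $\Delta t$ small enough that the right-hand side is strictly dominated, this forces $\ecko=\mathbf0$ and $\Ecko=\mathbf0$; then the velocity difference equation collapses to $\mathcal A_h\bigl((\mathbf0,\epsilon),(\mathbf0,q_h)\bigr)=0$ for all $q_h\in Q_h$, i.e.\ $\delta_0|\epsilon|_h^2=0$, so $\nabla\epsilon\equiv\mathbf0$ and hence $\epsilon=0$ because $\epsilon\in Q_h\subset L_0^2(\Omega)$. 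This closes the induction step. I expect the main obstacle to be the third paragraph: only $\ell^\infty(L^2)$ and $\ell^2(H^1)$ — not $\ell^\infty(H^1)$ — control of the numerical solution is available, so every inverse-inequality passage to $L^\infty$ or $L^4$ costs a negative power of $h$ or a factor $D(h)$, and balancing this loss against $\tfrac1{\Delta t}$, and against $\varepsilon|\Ecko|_1^2$ when it is present, is exactly what produces the two distinct conditions~\eqref{ieqs:peps} and~\eqref{ieqs:peps0}, the non-diffusive case $\varepsilon=0$ being the more delicate one.
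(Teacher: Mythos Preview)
Your plan is correct and essentially matches the paper's argument. Two organizational points differ and are worth noting. First, the paper isolates the $L^\infty$ boundedness of any solution into a separate Lemma~\ref{lem:boundedness} (proving $\|\ucko_h\|_{\ell^\infty(L^\infty)},\|\Ccko_h\|_{\ell^\infty(L^\infty)}\le c$ under~\eqref{ieqs:peps} or~\eqref{ieqs:peps0}); this is exactly your ``split into interpolant plus small discrete remainder, then inverse inequality'' step, done once and for all before the uniqueness estimate. Second, for the mixed terms $((\nabla\ucko_h^n)\tilde\Ecko_h^n,\tilde\Ecko_h^n)$ and $((\di\ucko_h^n)(\tilde\Ecko_h^n)^\#,\tilde\Ecko_h^n)$ the paper integrates by parts, trading $\|\nabla\ucko_h^n\|_{0,\infty}$ for $\|\ucko_h^n\|_{0,\infty}|\tilde\Ecko_h^n|_1$; this sidesteps the very difficulty you flag (only $\ell^2(H^1)$, not $\ell^\infty(H^1)$, control of $\ucko_h$) more cleanly than bounding $\nabla\ucko_h^n$ directly. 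With the $L^\infty$ bounds in hand the surviving coefficient on the right is the $h$-independent constant $c(c_u^2+c_c^2+1)/\varepsilon$; the condition $\Delta t\le D(h)^{-2}$ is used only to make Lemma~\ref{lem:boundedness} apply and, via the choice of $h_\star$, to force $\Delta t$ below this $\varepsilon$-dependent threshold --- so your phrase ``surviving coefficient is $O(D(h)^2)$'' slightly mislocates where $D(h)$ actually enters. The paper closes with the discrete Gronwall Lemma~\ref{lem:Gronwall} rather than induction, but since the initial difference vanishes the two formulations are equivalent.
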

%
\par
The proof is given after preparing the next lemma.
\begin{lemma}\label{lem:boundedness}
Suppose Hypotheses~\ref{hyp:w} and~\ref{hyp:regularity} hold.
Then, for any pair~$(h, \Delta t)$ satisfying the following condition \eqref{ieqs:eps} or \eqref{ieqs:eps0}, any solution~$(\ucko_h, p_h, \Ccko_h)$ of scheme~\eqref{nonlin_scheme} with~\eqref{cond:if} satisfies
\begin{align}
  \|\Ccko_h\|_{\ell^\infty(L^\infty)}\le c_{c},
  \qquad
  \|\ucko_h\|_{\ell^\infty(L^\infty)}\le c_{u},
\label{ieqs:bddness}
\end{align}
where $c_c$ and $c_u$ are positive constants independent of $h$ and $\Delta t$ defined just below.
\\
(i)~When $\varepsilon > 0$,
\begin{align}
  h\in (0, h_\dagger], \quad \Delta t \le D(h)^{-2} ,
\label{ieqs:eps}
\end{align}
where $h_\dagger$ is defined by~\eqref{def:h_ast} below.
Furthermore, $c_c=c_{\dagger c}$ and~$c_u=c_{\dagger u}$, which are defined by \eqref{def:c_ast_c} and~\eqref{def:c_ast_u}.
\\
(ii)~When $\varepsilon = 0$,
\begin{align}
  h \in (0, \bar{h}_\dagger], \quad \Delta t \le h ,
\label{ieqs:eps0}
\end{align}
where $\bar{h}_\dagger$ is defined by~\eqref{def:h_ast_bar} below.
Furthermore,  $c_c=\bar{c}_{\dagger c}$ and~$c_u=\bar{c}_{\dagger u}$, which are defined by \eqref{def:c_ast_c_bar} and~\eqref{def:c_ast_u_bar}.
\end{lemma}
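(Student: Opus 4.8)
The strategy is to split each numerical unknown into a projection/interpolation part, which inherits an $L^\infty$ bound from the exact solution, and an error part, which is small in $L^2$ by Theorem~\ref{thm:error_estimates} and can therefore be controlled in $L^\infty$ by an inverse inequality. Write, as in Section~\ref{sec:proofs}, $\ucko_h^n = \ecko_h^n + \SPu^n$ and $\Ccko_h^n = \Ecko_h^n + \check{\Ccko}_h^n$ with $(\SPu^n,\SPp^n) = \Pi_h^{\rm S}(\ucko,p)(t^n)$ and $\check{\Ccko}_h^n = \Pi_h\Ccko(t^n)$. By~\eqref{ieq:bound_u_hat_1inf} one already has $\|\SPu^n\|_{0,\infty}\le\|\SPu^n\|_{1,\infty}\le c_s$, and by~\eqref{ieq:Pi_h_0inf} together with Hypothesis~\ref{hyp:regularity} one has $\|\check{\Ccko}_h^n\|_{0,\infty}=\|\Pi_h\Ccko^n\|_{0,\infty}\le\|\Ccko^n\|_{0,\infty}\le\|\Ccko\|_{C(L^\infty)}$; both are bounds by constants of the required kind. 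It thus suffices to bound $\|\ecko_h^n\|_{0,\infty}$ and $\|\Ecko_h^n\|_{0,\infty}$ uniformly in $n\in\{1,\dots,N_T\}$.

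For the non-diffusive case~(ii) I would use the crude inverse inequality $\|\gcko_h\|_{0,\infty}\le\alpha_{23}h^{-1}\|\gcko_h\|_0$ from Lemma~\ref{lem:interpolation} and the $\ell^\infty(L^2)$ error bounds $\|\ecko_h\|_{\ell^\infty(L^2)},\|\Ecko_h\|_{\ell^\infty(L^2)}\le c_\dagger(h+\Delta t)$ of Theorem~\ref{thm:error_estimates}: this gives $\|\ecko_h^n\|_{0,\infty},\|\Ecko_h^n\|_{0,\infty}\le\alpha_{23}c_\dagger h^{-1}(h+\Delta t)\le 2\alpha_{23}c_\dagger$ precisely under $\Delta t\le h$. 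Combining with the projection bounds yields~\eqref{ieqs:bddness} with $\bar c_{\dagger u}=2\alpha_{23}c_\dagger+c_s$ and $\bar c_{\dagger c}=2\alpha_{23}c_\dagger+\|\Ccko\|_{C(L^\infty)}$. The threshold $\bar h_\dagger$ is taken so small that $h\le\bar h_\dagger$ forces $h\le h_0$ and $\Delta t\le h\le\bar h_\dagger\le\Delta t_0$, so that Theorem~\ref{thm:error_estimates} (and hence also Proposition~\ref{prop:RT_TU} via~\eqref{cond:dt_w_Jacobian}) applies; e.g. $\bar h_\dagger:=\min\{h_0,\Delta t_0\}$ works, and the constants do not depend on $h,\Delta t$.

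For the diffusive case~(i) the condition $\Delta t\le D(h)^{-2}$ is too weak for $h^{-1}(h+\Delta t)$ to stay bounded, so I would instead invoke the sharper logarithmic inverse inequality $\|\gcko_h\|_{0,\infty}\le\alpha_{24}D(h)\|\gcko_h\|_1$. This reduces everything to a \emph{pointwise-in-time} $H^1$ error estimate $\|\ecko_h^n\|_1,\|\Ecko_h^n\|_1\le c(h+\Delta t)$ (with $c$ possibly depending on $\varepsilon$ and $\nu$, which is allowed in this lemma), since then $D(h)\|\ecko_h^n\|_1\le c\,(D(h)h+D(h)\Delta t)\le c\,(D(h)h+D(h)^{-1})\le 2c$ once $h$ is small enough that $D(h)h\le 1$. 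Such a pointwise $H^1$ bound is not part of Theorem~\ref{thm:error_estimates} (which only yields the $\ell^2(H^1)$ bounds), so it has to be proved here by a fresh energy argument: return to the error system~\eqref{eqns:error} and test~\eqref{eq:error_up} and~\eqref{eq:error_C} with the backward differences $\ol{D}_{\Delta t}\ecko_h^n$ and $\tfrac12\ol{D}_{\Delta t}\Ecko_h^n$, the multipliers that give $\ell^\infty$-in-time control of $\sqrt\nu\,\|\ecko_h\|_1$, $\sqrt\varepsilon\,|\Ecko_h|_1$ and of $\|\ol{D}_{\Delta t}(\ecko_h,\Ecko_h)\|_{\ell^2(L^2)}$. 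Here Lemma~\ref{lem:vanish_nonlin} disposes of the elastic coupling, Lemma~\ref{lem:composite_func}/Proposition~\ref{prop:RT_TU} handle the characteristic terms $\ecko_h^{n-1}\circ X_1^n,\ \Ecko_h^{n-1}\circ X_1^n$ (producing lower-order $\|\ecko_h^{n-1}\|_1,\|\Ecko_h^{n-1}\|_1$ contributions), the remainders are controlled by Lemma~\ref{lem:estimates_r_R}, and the quadratic terms $(\nabla\ecko_h^n)\Ecko_h^n$, $(\di\ecko_h^n)(\Ecko_h^n)^{\#}$ and the trace nonlinearities are bounded by combining the $\ell^\infty(L^2)\cap\ell^2(H^1)$ smallness of the errors with the inverse inequalities of Lemma~\ref{lem:interpolation}; a discrete Gronwall lemma (Lemma~\ref{lem:Gronwall}) then produces $\|\ecko_h\|_{\ell^\infty(H^1)},\|\Ecko_h\|_{\ell^\infty(H^1)}\le c(h+\Delta t)$. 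Finally $h_\dagger$ is fixed small enough that $h\le h_\dagger$ ensures $h\le h_0$, $D(h)h\le1$, $h\le D(h)^{-2}$ and $D(h)^{-2}\le\Delta t_0$, so that Theorem~\ref{thm:error_estimates} and the conditions of Proposition~\ref{prop:RT_TU} are in force, and $c_{\dagger c},c_{\dagger u}$ are read off as above.

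The main obstacle is exactly this last step: upgrading the $\ell^2$-in-time $H^1$ error control to $\ell^\infty$-in-time under the mild stability restriction $\Delta t=O(D(h)^{-2})$. The delicate points are (a) that with the method of characteristics the term $\ecko_h^{n-1}\circ X_1^n$ does not telescope cleanly against $\ol{D}_{\Delta t}\ecko_h^n$, so one pays with a $\|\ecko_h^{n-1}\|_1$-type term (and likewise for $\Ecko_h$) that must be reabsorbed, forcing a Gronwall constant that degenerates as $\varepsilon\to0$ — harmless here, since that case is treated separately with the stronger CFL $\Delta t\le h$; and (b) that the nonlinear terms, once paired with a discrete time derivative, generate factors such as $\|\nabla\ecko_h^n\|_{L^4}\|\Ecko_h^n\|_{L^4}$ that can only be tamed by an inverse inequality, and it is precisely the resulting negative power of $h$ (equivalently, of $D(h)$) that dictates the restriction $\Delta t\le D(h)^{-2}$ rather than a weaker one.
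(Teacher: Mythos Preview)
Your treatment of case~(ii) is exactly what the paper does: the crude inverse inequality $\|\cdot\|_{0,\infty}\le\alpha_{23}h^{-1}\|\cdot\|_0$ applied to $\ecko_h^n$ and $\Ecko_h^n$, combined with the $\ell^\infty(L^2)$ bounds~\eqref{ieq:error_estimates_h} under $\Delta t\le h$, together with $\bar h_\dagger:=\min\{h_0,\Delta t_0\}$. Nothing to add there.

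For case~(i), however, you have missed a one-line shortcut and replaced it with a difficult (and partly flawed) fresh energy argument. The paper does \emph{not} prove a pointwise-in-time $H^1$ bound at all. Instead it splits into two subcases. If $\Delta t\le h$ the case~(ii) argument already applies and yields $\bar c_{\dagger c},\bar c_{\dagger u}$. If $h\le\Delta t\le D(h)^{-2}$, it uses the trivial estimate
\[
\|\Ecko_h^n\|_1 \le \Delta t^{-1/2}\,\|\Ecko_h\|_{\ell^2(H^1)}
\]
(one term of an $\ell^2$-in-time sum dominates nothing more than $\Delta t^{-1/2}$ times the sum), and likewise for $\ecko_h^n$. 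The $\ell^2(H^1)$ bounds of~\eqref{ieq:error_estimates_h}, rescaled by $\varepsilon^{-1/2}$ resp.\ $\nu^{-1/2}$, give $\|\Ecko_h\|_{\ell^2(H^1)},\|\ecko_h\|_{\ell^2(H^1)}\le c_1(\Delta t+h)$ with $c_1:=\tilde c_\dagger\max\{1,(T+\varepsilon^{-1})^{1/2},\nu^{-1/2}\}$. Then the logarithmic inverse inequality yields
\[
\|\Ecko_h^n\|_{0,\infty}\le \alpha_{24}D(h)\,\Delta t^{-1/2}c_1(\Delta t+h)
= \alpha_{24}c_1\bigl(D(h)\Delta t^{1/2}+D(h)\Delta t^{-1/2}h\bigr)\le 2\alpha_{24}c_1,
\]
since $D(h)\Delta t^{1/2}\le D(h)\cdot D(h)^{-1}=1$ and $D(h)\Delta t^{-1/2}h\le D(h)h^{1/2}\le 1$ once $h_\dagger$ is small enough that $h\le D(h)^{-2}$. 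The velocity is handled identically. So no new energy estimate is needed: Theorem~\ref{thm:error_estimates} already contains everything required.

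Your proposed route via testing with $(\ol{D}_{\Delta t}\ecko_h^n,\tfrac12\ol{D}_{\Delta t}\Ecko_h^n)$ also has a genuine gap: Lemma~\ref{lem:vanish_nonlin} does \emph{not} ``dispose of the elastic coupling'' in that setting. The cancellation~\eqref{eq0} requires the test functions to be $(\ecko_h^n,\tfrac12\Ecko_h^n)$ themselves, not their discrete time derivatives; with $(\ol{D}_{\Delta t}\ecko_h^n,\tfrac12\ol{D}_{\Delta t}\Ecko_h^n)$ the three nonlinear terms do not match the structure of~\eqref{eq0} and you are left with genuine quadratic remainders involving $\nabla\ol{D}_{\Delta t}\ecko_h^n$. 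You seem to recognise this in your final paragraph, but the body of the argument asserts the opposite. Even if the resulting $L^4$-type estimates could be closed, it would be a much longer and more delicate proof than the paper's two-line argument above.
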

\begin{proof}
Let $n\in \{0,\ldots,N_T\}$ be fixed arbitrarily, and let $h_0$, $\Delta t_0$ and $\tilde{c}_\dagger$ be the positive constants in the statement of Theorem~\ref{thm:error_estimates} and in~\eqref{ieq:error_estimates_h}.
We fix a positive constant~$h_1 \in (0,1]$ such that
\[
h_1 \le D(h_1)^{-2} \le \Delta t_0.
\]
We prepare the following constants to be used in the proof:
\begin{subequations}
\begin{align}
\bar{h}_\dagger & \defeq \min\bigl\{ h_0, \Delta t_0 \bigr\},
\label{def:h_ast_bar}\\
\bar{c}_{\dagger c} & \defeq 2 \alpha_{23} \tilde{c}_\dagger + \|\Ccko\|_{C(L^\infty)},
\label{def:c_ast_c_bar}\\
\bar{c}_{\dagger u} & \defeq \alpha_{23} \bigl[ 2 \tilde{c}_\dagger + (\alpha_{21}+\alpha_3)\|(\ucko, p)\|_{C(H^2\times H^1)} \bigr]+ \|\ucko\|_{C(L^\infty)},
\label{def:c_ast_u_bar}\\
c_1 & \defeq \tilde{c}_\dagger \max\bigl\{ 1, (T + \varepsilon^{-1})^{1/2}, \nu^{-1/2}\bigr\}, \notag\\
h_\dagger & \defeq \min \{ \bar{h}_\dagger, h_1 \},
\label{def:h_ast} \\
c_{\dagger c} & \defeq \max\bigl\{ 2 \alpha_{24}c_1 + \|\Ccko\|_{C(L^\infty)}, \bar{c}_{\dagger c} \bigr\},
\label{def:c_ast_c}\\
c_{\dagger u} & \defeq \max\bigl\{ \alpha_{24} \bigl[ 2 c_1 + (\alpha_{22}+\alpha_3)\|(\ucko,p)\|_{C(H^2\times H^1)} \bigr] + \|\ucko\|_{C(L^\infty)}, \bar{c}_{\dagger u} \bigr\}.
\label{def:c_ast_u}
\end{align}
\end{subequations}
\par
Firstly, we prove~\eqref{ieqs:bddness} in case~(ii).
Since condition~\eqref{ieqs:eps0} implies~\eqref{cond:ht}, Theorem~\ref{thm:error_estimates} ensures~\eqref{ieq:error_estimates_h}.
Then, the boundedness of~$\|\Ccko_h^n\|_{0,\infty}$ is obtained as follows:
\begin{align*}
\|\Ccko_h^n\|_{0,\infty}
& \le \|\Ecko_h^n\|_{0,\infty} + \|\check{\Ccko}_h^n\|_{0,\infty}
\le \alpha_{23} h^{-1} \|\Ecko_h^n\|_0 + \|\Ccko\|_{C(L^\infty)} \\
& \le \alpha_{23} h^{-1} \tilde{c}_\dagger (\Delta t + h) + \|\Ccko\|_{C(L^\infty)}
\le 2 \alpha_{23} \tilde{c}_\dagger + \|\Ccko\|_{C(L^\infty)} \\
& = \bar{c}_{\dagger c}.
\end{align*}
Let $\check{\ucko}_h(t) \defeq (\Pi_h\ucko) (t)$ for $t\in [0,T]$.
The boundedness of~$\|\ucko_h^n\|_{0,\infty}$ is obtained as follows:
\begin{align*}
\| \ucko_h^n \|_{0,\infty}
&  \le \|\ecko_h^n\|_{0,\infty} + \|\hat{\ucko}_h^n-\check{\ucko}_h^n\|_{0,\infty} + \|\check{\ucko}_h^n\|_{0,\infty}
\le \alpha_{23} h^{-1} \bigl[ \|\ecko_h^n\|_0 + \|\hat{\ucko}_h^n-\check{\ucko}_h^n\|_0 \bigr] + \|\ucko\|_{C(L^\infty)}\\
& \le \alpha_{23} h^{-1} \bigl[ \|\ecko_h^n\|_0 + \|\hat{\ucko}_h^n-\ucko^n\|_0 + \|\ucko^n-\check{\ucko}_h^n\|_0 \bigr] + \|\ucko\|_{C(L^\infty)}\\
& \le \alpha_{23} h^{-1} \bigl[ \tilde{c}_\dagger (\Delta t + h) + \alpha_3 h \|(\ucko, p)\|_{C(H^2\times H^1)} + \alpha_{21} h \|\ucko\|_{C(H^1)} \bigr] + \|\ucko\|_{C(L^\infty)}\\
& \le \alpha_{23} \bigl[ 2 \tilde{c}_\dagger + (\alpha_{21} + \alpha_3) \|(\ucko, p)\|_{C(H^2\times H^1)} \bigr] + \|\ucko\|_{C(L^\infty)} \\
& = \bar{c}_{\dagger u}.
\end{align*}
\par
Secondly, we prove~\eqref{ieqs:bddness} in case~(i).
Since condition~\eqref{ieqs:eps} implies~\eqref{cond:ht}, the estimates~\eqref{ieq:error_estimates_h} and the definition of~$c_1$ lead to
\begin{align*}
\|\ecko_h\|_{\ell^\infty(L^2)},\ \|\ecko_h\|_{\ell^2(H^1)},\
\|\Ecko_h\|_{\ell^\infty(L^2)},\ \|\Ecko_h\|_{\ell^2(H^1)}
\le c_1 (\Delta t + h).
\end{align*}
When $\Delta t \le h$, we have~$\|\Ccko_h^n\|_{0,\infty} \le \bar{c}_{\dagger c} \le c_{\dagger c}$ and~$\|\ucko_h^n\|_{0,\infty} \le \bar{c}_{\dagger u} \le c_{\dagger u}$ from the proof in case~(ii) above.
When $(D(h)^2h^2 \le)~h \le \Delta t \le D(h)^{-2}$, we have
\begin{align*}
\|\Ccko_h^n\|_{0,\infty}
& \le \|\Ecko_h^n\|_{0,\infty} + \|\Ccko\|_{C(L^\infty)}
\le \alpha_{24} D(h) \|\Ecko_h^n\|_1 + \|\Ccko\|_{C(L^\infty)}
\le \alpha_{24} D(h) \Delta t^{-1/2} \|\Ecko_h\|_{\ell^2(H^1)} + \|\Ccko\|_{C(L^\infty)}\\
& \le \alpha_{24} c_1 D(h) ( \Delta t^{1/2} + \Delta t^{-1/2} h ) + \|\Ccko\|_{C(L^\infty)}
\le 2 \alpha_{24} c_1 + \|\Ccko\|_{C(L^\infty)} \\
& \le c_{\dagger c},\\
\|\ucko_h^n\|_{0,\infty}
& \le \|\ecko_h^n\|_{0,\infty} + \|\hat{\ucko}_h^n-\check{\ucko}_h^n\|_{0,\infty} + \|\check{\ucko}_h^n\|_{0,\infty}
\le \alpha_{24} D(h) \bigl[ \|\ecko_h^n\|_1 + \|\hat{\ucko}_h^n-\check{\ucko}_h^n\|_1 \bigr] + \|\ucko\|_{C(L^\infty)}\\
& \le \alpha_{24} D(h) \bigl[ \Delta t^{-1/2}\|\ecko_h\|_{\ell^2(H^1)} + \|\hat{\ucko}_h^n-\ucko^n\|_1 + \|\ucko^n-\check{\ucko}_h^n\|_1 \bigr] + \|\ucko\|_{C(L^\infty)} \\
& \le \alpha_{24} D(h) \bigl[ c_1 (\Delta t^{1/2} + \Delta t^{-1/2}h) + (\alpha_{22} + \alpha_3) h \|(\ucko, p)\|_{C(H^2\times H^1)} \bigr] + \|\ucko\|_{C(L^\infty)}\\
& \le \alpha_{24} \bigl[ 2 c_1 + (\alpha_{22} + \alpha_3) \|(\ucko, p)\|_{C(H^2\times H^1)} \bigr] + \|\ucko\|_{C(L^\infty)} \\
& \le c_{\dagger u}.
\end{align*}
Thus, we obtain~\eqref{ieqs:bddness}.
\end{proof}
\noindent
{\it Proof of Proposition~\ref{prop:uniqueness}.}
\ \
The definitions~\eqref{def:h_star}, \eqref{def:h_star_bar} and~\eqref{def:c_star_bar} below of the constants $h_\star$, $\bar{h}_\star$ and $c_\star$ imply  $h_\star \le h_\dagger$, $\bar{h}_\star \le \bar{h}_\dagger$ and $\bar{c}_\star \le 1$.
Hence any pair of~$(h,\Delta t)$ in Proposition~\ref{prop:uniqueness} satisfies the assumptions of Lemma~\ref{lem:boundedness} for $\varepsilon \ge 0$.
\par
Suppose $(\tilde{\ucko}_h, \tilde{p}_h, \tilde{\Ccko}_h)$ and $(\ucko_h, p_h, \Ccko_h)$ are any two solutions of scheme~\eqref{nonlin_scheme} with~\eqref{cond:if}.
Let $(\tilde{\ecko}_h, \tilde{\epsilon}_h, \tilde{\Ecko}_h) \defeq (\tilde{\ucko}_h, \tilde{p}_h, \tilde{\Ccko}_h) - (\ucko_h, p_h, \Ccko_h)$ be the difference.
Since both of $(\tilde{\ucko}_h, \tilde{p}_h, \tilde{\Ccko}_h)$ and $(\ucko_h, p_h, \Ccko_h)$ satisfy scheme~\eqref{nonlin_scheme} with \eqref{cond:if}, we have
\begin{subequations}\label{eqns:difference}
\begin{align}
\biggl( \fz{\tilde{\ecko}_h^n - \tilde{\ecko}_h^{n-1} \circ X_1^n}{\Delta t}, \vcko_h \biggr) & + \mathcal{A}_h \bigl( (\tilde{\ecko}_h^n, \tilde{\epsilon}_h^n), (\vcko_h,q_h) \bigr)
= - \bigl( (\tr \tilde{\Ecko}_h^n) \tilde{\Ecko}_h^n, \nabla \vcko_h \bigr) + {}_{V_h^\prime}\lA \tilde{\rcko}_h^n, \vcko_h\rA_{V_h},\qquad
\label{eq:difference_up} \\
\biggl( \fz{\tilde{\Ecko}_h^n - \tilde{\Ecko}_h^{n-1} \circ X_1^n}{\Delta t}, \Dcko_h \biggr) & + \varepsilon a_c (\tilde{\Ecko}_h^n, \Dcko_h) = 2 \bigl( (\nabla \tilde{\ecko}_h^n)\tilde{\Ecko}_h^n, \Dcko_h \bigr) + \bigl( (\di \tilde{\ecko}_h^n) (\tilde{\Ecko}_h^n)^{\#}, \Dcko_h \bigr) + {}_{W_h^\prime}\lA \tilde{\Rcko}_h^n, \Dcko_h\rA_{W_h},
\label{eq:difference_C}\\
& \qquad\qquad\qquad\qquad\qquad\qquad\qquad\qquad\qquad\qquad \forall (\vcko_h, q_h, \Dcko_h) \in V_h\times Q_h \times W_h, \notag
\end{align}
\end{subequations}
where
\begin{align*}
\tilde{\rcko}_h^n & \in V_h^\prime, \qquad \tilde{\Rcko}_h^n \defeq \sum_{i=1}^{5} \tilde{\Rcko}_{hi}^n \in W_h^\prime,\\
{}_{V_h^\prime}\lA \tilde{\rcko}_h^n, \vcko_h \rA_{V_h} & \defeq -\bigl( (\tr\Ccko_h^n)\tilde{\Ecko}_h^n + (\tr\tilde{\Ecko}_h^n)\Ccko_h^n, \nabla\vcko_h \bigr), \\
( \tilde{\Rcko}_{h1}^n, \Dcko_h ) & \defeq 2\bigl( (\nabla \ucko_h^n) \tilde{\Ecko}_h^n + (\nabla \tilde{\ecko}_h^n) \Ccko_h^n, \Dcko_h\bigr), \\
( \tilde{\Rcko}_{h2}^n, \Dcko_h ) & \defeq \bigl( (\di \ucko_h^n) (\tilde{\Ecko}_h^n)^{\#} + (\di \tilde{\ecko}_h^n) (\Ccko_h^n)^{\#}, \Dcko_h \bigr),\\
( \tilde{\Rcko}_{h3}^n, \Dcko_h ) & \defeq -\bigl( [\tr (\tilde{\Ecko}_h^n+\Ccko_h^n)]^2 \tilde{\Ecko}_h^n, \Dcko_h \bigr),\\
( \tilde{\Rcko}_{h4}^n, \Dcko_h ) & \defeq -\bigl( [\tr (\tilde{\Ecko}_h^n+2\Ccko_h^n)] (\tr\tilde{\Ecko}_h^n) \Ccko_h^n, \Dcko_h\bigr),\\
( \tilde{\Rcko}_{h5}^n, \Dcko_h ) & \defeq \bigl( (\tr\tilde{\Ecko}_h^n)\Icko, \Dcko_h \bigr),
\end{align*}
and $(\tilde{\ecko}_h^0, \tilde{\Ecko}_h^0)=({\bf 0}, {\bf 0})$.
Substituting~$(\tilde{\ecko}_h^n, - \tilde{\epsilon}_h^n, \fz{1}{2}\tilde{\Ecko}_h^n)$ into~$(\vcko_h, q_h, \Dcko_h)$ in~\eqref{eqns:difference} and using Lemma~\ref{lem:vanish_nonlin} and similar estimates in the derivation of~\eqref{ieq:error_substitution}, we have
\begin{align}
\ol{D}_{\Delta t} \Bigl( \fz{1}{2} \|\tilde{\ecko}_h^n\|_0^2 + \fz{1}{4} \|\tilde{\Ecko}_h^n\|_0^2 \Bigr)
& + \fz{\nu}{\alpha_1^2}\|\tilde{\ecko}_h^n\|_1^2 + \delta_0 |\tilde{\epsilon}_h^n|_h^2 + \fz{\varepsilon}{2} |\tilde{\Ecko}_h^n|_1^2
\notag\\
& \le c_w (\|\tilde{\ecko}_h^{n-1}\|_0^2 + \|\tilde{\Ecko}_h^{n-1}\|_0^2 ) + \fz{\alpha_1^2}{4\nu} \| \tilde{\rcko}_h^n \|_{-1}^2 + \Bigl( \tilde{\Rcko}_h^n, \fz{1}{2} \tilde{\Ecko}_h^n \Bigr).
\label{ieq:difference_substitution}
\end{align}
The following estimates are obtained for the functionals~$\tilde{\rcko}_h^n$ and~$\tilde{\Rcko}_h^n$:
\begin{align}
\| \tilde{\rcko}_h^n \|_{-1} \le c \| \Ccko_h^n \|_{0,\infty} \|\tilde{\Ecko}_h^n\|_0,
\qquad
\label{ieq:j}
\end{align}
\vspace*{-2\baselineskip}
\begin{subequations}\label{ieqs:J}
\begin{align}
\Bigl( \tilde{\Rcko}_{h1}^n, \fz{1}{2} \tilde{\Ecko}_h^n \Bigr), \  \Bigl( \tilde{\Rcko}_{h2}^n, \fz{1}{2} \tilde{\Ecko}_h^n \Bigr) & \le c \|\tilde{\Ecko}_h^n\|_0 \bigl( \| \ucko_h^n \|_{0,\infty} |\tilde{\Ecko}_h^n|_1 + \| \Ccko_h^n \|_{0,\infty} | \tilde{\ecko}_h^n|_1 \bigr),
\label{ieq:J1J2}
\\
\Bigl( \tilde{\Rcko}_{h3}^n, \fz{1}{2}\tilde{\Ecko}_h^n \Bigr) & \le -\fz{3}{8}\|(\tr\tilde{\Ecko}_h^n) \tilde{\Ecko}_h^n\|_0^2 + c \|\Ccko_h^n\|_{0,\infty}^2 \|\tilde{\Ecko}_h^n\|_0^2,
\label{ieq:J3}
\\
\Bigl( \tilde{\Rcko}_{h4}^n, \fz{1}{2}\tilde{\Ecko}_h^n \Bigr) & \le \fz{1}{8} \| (\tr\tilde{\Ecko}_h^n) \tilde{\Ecko}_h^n\|_0^2 + c \|\Ccko_h^n\|_{0,\infty}^2 \|\tilde{\Ecko}_h^n\|_0^2,
\label{ieq:J4}
\\
\|\tilde{\Rcko}_{h5}^n\|_0 & \le c \|\tilde{\Ecko}_h^n\|_0.
\label{ieq:J5}
\end{align}
\end{subequations}
We note that the estimates~\eqref{ieq:J1J2} are proved by the integration by parts,
\begin{align*}
\Bigl( \tilde{\Rcko}_{h1}^n, \fz{1}{2} \tilde{\Ecko}_h^n \Bigr)
& = \bigl( (\nabla \ucko_h^n)\tilde{\Ecko}_h^n, \tilde{\Ecko}_h^n \bigr) + \bigl( (\nabla \tilde{\ecko}_h^n) \Ccko_h^n, \tilde{\Ecko}_h^n\bigr)
= - \bigl( \ucko_h^n, \nabla ( \tilde{\Ecko}_h^n \tilde{\Ecko}_h^n ) \bigr) + \bigl( (\nabla \tilde{\ecko}_h^n) \Ccko_h^n, \tilde{\Ecko}_h^n\bigr)\\
& \le c \bigl( \| \ucko_h^n \|_{0,\infty} \|\tilde{\Ecko}_h^n\|_0 |\tilde{\Ecko}_h^n|_1 + \| \Ccko_h^n \|_{0,\infty} |\tilde{\ecko}_h^n|_1 \|\tilde{\Ecko}_h^n\|_0 \bigr),\\
\Bigl( \tilde{\Rcko}_{h2}^n, \fz{1}{2} \tilde{\Ecko}_h^n \Bigr)
& = \fz{1}{2} \bigl( (\di \ucko_h^n) (\tilde{\Ecko}_h^n)^{\#}, \tilde{\Ecko}_h^n \bigr) + \fz{1}{2} \bigl( (\di \tilde{\ecko}_h^n) (\Ccko_h^n)^{\#}, \tilde{\Ecko}_h^n \bigr) \\
& = -\fz{1}{2} \bigl( \ucko_h^n \nabla (\tilde{\Ecko}_h^n)^{\#}, \tilde{\Ecko}_h^n \bigr) -\fz{1}{2} \bigl( (\tilde{\Ecko}_h^n)^{\#}, \ucko_h^n \nabla \tilde{\Ecko}_h^n \bigr) + \fz{1}{2} \bigl( (\di \tilde{\ecko}_h^n) (\Ccko_h^n)^{\#}, \tilde{\Ecko}_h^n \bigr) \\
& \le c \bigl( \|\ucko_h^n\|_{0,\infty} |\tilde{\Ecko}_h^n|_1 \|\tilde{\Ecko}_h^n\|_0 +  \|\Ccko_h^n\|_{0,\infty} |\tilde{\ecko}_h^n|_1 \|\tilde{\Ecko}_h^n\|_0 \bigr),
\end{align*}
and that the other estimates~\eqref{ieq:j}, \eqref{ieq:J3}, \eqref{ieq:J4} and~\eqref{ieq:J5} are obtained similarly to~\eqref{ieq:r3}, \eqref{ieq:R8}, \eqref{ieq:R9} and~\eqref{ieq:R11}, respectively.
Applying Lemma~\ref{lem:boundedness} to~\eqref{ieq:j}, we have
\begin{align}
\| \tilde{\rcko}_h^n \|_{-1}
\le c c_c \|\tilde{\Ecko}_h^n\|_0.
\label{ieq:difference_substitution_j}
\end{align}
\par
We consider case~(i).
The estimates~\eqref{ieqs:J} and Lemma~\ref{lem:boundedness} lead to
\begin{align}
\Bigl( \tilde{\Rcko}_h^n, \fz{1}{2} \tilde{\Ecko}_h^n \Bigr)
& \le \fz{c}{\varepsilon} (c_c^2+c_u^2+1) \|\tilde{\Ecko}_h^n\|_0^2 + \fz{\nu}{2\alpha_1^2}\|\tilde{\ecko}_h^n\|_1^2 + \fz{\varepsilon}{4} |\tilde{\Ecko}_h^n|_1^2 - \fz{1}{4}\|(\tr\tilde{\Ecko}_h^n) \tilde{\Ecko}_h^n\|_0^2.
\label{ieq:difference_substitution_J}
\end{align}
Combining~\eqref{ieq:difference_substitution_j} and~\eqref{ieq:difference_substitution_J} with~\eqref{ieq:difference_substitution}, we have
\begin{align}
\ol{D}_{\Delta t} \Bigl( \fz{1}{2} \|\tilde{\ecko}_h^n\|_0^2 + \fz{1}{4} \|\tilde{\Ecko}_h^n\|_0^2 \Bigr) + \fz{\nu}{2\alpha_1^2}\|\tilde{\ecko}_h^n\|_1^2 + \delta_0 |\tilde{\epsilon}_h^n|_h^2 + \fz{\varepsilon}{4} |\tilde{\Ecko}_h^n|_1^2 + \fz{1}{4}\|(\tr\tilde{\Ecko}_h^n) \tilde{\Ecko}_h^n\|_0^2 \notag\\
\le \fz{c}{\varepsilon} (c_c^2+c_u^2+1) \Bigl( \fz{1}{4} \|\tilde{\Ecko}_h^n\|_0^2 \Bigr) + c_w \Bigl( \fz{1}{2} \|\tilde{\ecko}_h^{n-1}\|_0^2 + \fz{1}{4} \|\tilde{\Ecko}_h^{n-1}\|_0^2 \Bigr).
\label{ieq:difference_gronwall}
\end{align}
Let~$\Delta t_\star \defeq \varepsilon / [ 2c (c_c^2+c_u^2+1) ]$, and we fix a positive constant~$h_2 \in (0,1]$ such that~$D(h_2)^{-2} \le \Delta t_\star$.
We define~$h_\star$ by
\begin{align}
h_\star \defeq \min \{ h_\dagger, h_2 \}.
\label{def:h_star}
\end{align}
Condition \eqref{ieqs:peps} implies $\Delta t \le D(h_2)^{-2} \le \varepsilon/ [ 2c (c_c^2+c_u^2+1) ]~(=\Delta t_\star)$.
Applying Lemma~\ref{lem:Gronwall} to~\eqref{ieq:difference_gronwall}
with
\begin{align*}
x^n & = \fz{1}{2} \|\tilde{\ecko}_h^n\|_0^2 + \fz{1}{4} \|\tilde{\Ecko}_h^n\|_0^2, &
y^n & = \fz{\nu}{2\alpha_1^2}\|\tilde{\ecko}_h^n\|_1^2 + \delta_0 |\tilde{\epsilon}_h^n|_h^2 + \fz{\varepsilon}{4} |\tilde{\Ecko}_h^n|_1^2 + \fz{1}{4}\|(\tr\tilde{\Ecko}_h^n) \tilde{\Ecko}_h^n\|_0^2, \\
a_0 & = \fz{c}{\varepsilon} (c_c^2+c_u^2+1), \quad a_1 = 0, &
b^n & = c_w \Bigl( \fz{1}{2} \|\tilde{\ecko}_h^{n-1}\|_0^2 + \fz{1}{4} \|\tilde{\Ecko}_h^{n-1}\|_0^2 \Bigr),
\end{align*}
and using the fact~$(\tilde{\ecko}_h^0, \tilde{\Ecko}_h^0) = ( {\bf 0},{\bf 0})$, we get~$(\tilde{\ecko}_h, \tilde{\epsilon}_h, \tilde{\Ecko}_h)=({\bf 0}, 0, {\bf 0})$.
\par
We prove (ii).
In place of~\eqref{ieq:J1J2} we use the estimates,
\begin{align}
\Bigl( \tilde{\Rcko}_{h1}^n, \fz{1}{2} \tilde{\Ecko}_h^n \Bigr), \ \Bigl( \tilde{\Rcko}_{h2}^n, \fz{1}{2} \tilde{\Ecko}_h^n \Bigr)
& \le c \|\tilde{\Ecko}_h^n\|_0 \bigl( \alpha_{26} h^{-1} \| \ucko_h^n \|_{0,\infty} \|\tilde{\Ecko}_h^n\|_0 + \| \Ccko_h^n \|_{0,\infty} |\tilde{\ecko}_h^n|_1 \bigr).
\tag{\ref{ieq:J1J2}$^\prime\,$}
\label{ieq:J1J2_eps_0}
\end{align}
We define~$\bar{h}_\star$ by
\begin{align}
\bar{h}_\star \defeq \min \bigl\{ \bar{h}_\dagger, 1/c_u, c_u/c_c^2 \bigr\}.
\label{def:h_star_bar}
\end{align}
For any $h \in (0, \bar{h}_\star]$ the estimates~\eqref{ieqs:J}, Lemma~\ref{lem:boundedness} and~\eqref{def:h_star_bar} lead to
\begin{align}
\Bigl( \tilde{\Rcko}_h^n, \fz{1}{2} \tilde{\Ecko}_h^n \Bigr)
& \le c \Bigl( \fz{c_u}{h} + c_c^2 + 1 \Bigr) \|\tilde{\Ecko}_h^n\|_0^2 + \fz{\nu}{2\alpha_1^2}\|\tilde{\ecko}_h^n\|_1^2 -\fz{1}{4}\|(\tr\tilde{\Ecko}_h^n) \tilde{\Ecko}_h^n\|_0^2 \notag\\
& \le \fz{c^\prime c_u}{h} \|\tilde{\Ecko}_h^n\|_0^2 + \fz{\nu}{2\alpha_1^2}\|\tilde{\ecko}_h^n\|_1^2 -\fz{1}{4}\|(\tr\tilde{\Ecko}_h^n) \tilde{\Ecko}_h^n\|_0^2.
\label{ieq:difference_substitution_J_eps_0}
\end{align}
Combining~\eqref{ieq:difference_substitution_j} and~\eqref{ieq:difference_substitution_J_eps_0} with~\eqref{ieq:difference_substitution}, we have
\begin{align}
\ol{D}_{\Delta t} \Bigl( \fz{1}{2} \|\tilde{\ecko}_h^n\|_0^2 + \fz{1}{4} \|\tilde{\Ecko}_h^n\|_0^2 \Bigr) + \fz{\nu}{2\alpha_1^2}\|\tilde{\ecko}_h^n\|_1^2 + \delta_0 |\tilde{\epsilon}_h^n|_h^2 + \fz{1}{4}\|(\tr\tilde{\Ecko}_h^n) \tilde{\Ecko}_h^n\|_0^2
\le \fz{c c_u}{h} \Bigl( \fz{1}{4} \|\tilde{\Ecko}_h^n\|_0^2 \Bigr) + c_w \Bigl( \fz{1}{2} \|\tilde{\ecko}_h^{n-1}\|_0^2 + \fz{1}{4} \|\tilde{\Ecko}_h^{n-1}\|_0^2 \Bigr).
\label{ieq:difference_gronwall_eps_0}
\end{align}
We define~$\bar{c}_\star$ by
\begin{align}
\bar{c}_\star \defeq \min \bigl\{ 1, 1/(2 c c_u) \bigr\}.
\label{def:c_star_bar}
\end{align}
Since condition~\eqref{ieqs:peps0} implies $\Delta t \le h/(2c c_u)$, applying Lemma~\ref{lem:Gronwall} to~\eqref{ieq:difference_gronwall_eps_0}
with
\begin{align*}
x^n & = \fz{1}{2} \|\tilde{\ecko}_h^n\|_0^2 + \fz{1}{4} \|\tilde{\Ecko}_h^n\|_0^2, &
y^n & = \fz{\nu}{2\alpha_1^2}\|\tilde{\ecko}_h^n\|_1^2 + \delta_0 |\tilde{\epsilon}_h^n|_h^2 + \fz{1}{4}\|(\tr\tilde{\Ecko}_h^n) \tilde{\Ecko}_h^n\|_0^2, \\
a_0 & = \fz{c c_u}{h}, \qquad a_1 = 0, &
b^n & = c_w \Bigl( \fz{1}{2} \|\tilde{\ecko}_h^{n-1}\|_0^2 + \fz{1}{4} \|\tilde{\Ecko}_h^{n-1}\|_0^2 \Bigr),
\end{align*}
and using the fact~$(\tilde{\ecko}_h^0, \tilde{\Ecko}_h^0) = ({\bf 0}, {\bf 0})$, we obtain~$(\tilde{\ecko}_h, \tilde{\epsilon}_h, \tilde{\Ecko}_h)=({\bf 0}, 0, {\bf 0})$, which completes the proof of~(ii).
\qed
%
%
%
%
%
\section{Numerical experiments}\label{sec:numerics}
In this section we present numerical results by scheme~\eqref{nonlin_scheme} in order to confirm the theoretical convergence order.
For the detailed description of the algorithm we refer to~\cite{Miz-2015}.
\begin{example}
In problem~\eqref{model} we set $\Omega=(0, 1)^2$ and $T=0.5$, and we consider three cases for the pair of~$\nu$ and~$\varepsilon$,
\[
(\nu,\varepsilon)=(10^{-1}, 10^{-1}),\ (10^{-1}, 10^{-3}),\ (1, 0).
\]
The functions $\fko$, $\Fko$, $\ucko^0$ and $\Ccko^0$ are given such that the exact solution to~\eqref{model} is as follows:
\begin{equation}\label{exact_solution}
\begin{aligned}
\mathbf{u} (x,t) &= \( \pd{\psi}{x_2} (x,t), -\pd{\psi}{x_1} (x,t) \),\quad p(x,t) = \sin \{ \pi (x_1 + 2 x_2 + t) \},\\
C_{11}(x,t) &=\frac{1}{2} \sin^2 (\pi x_1) \sin^2 (\pi x_2) \sin \{\pi(x_1+t)\} + 1,\\
C_{22}(x,t) &=\frac{1}{2} \sin^2 (\pi x_1) \sin^2 (\pi x_2) \sin \{\pi(x_2+t)\} + 1,\\
C_{12}(x,t) &=\frac{1}{2} \sin^2 (\pi x_1) \sin^2 (\pi x_2) \sin \{\pi(x_1+x_2+t)\} \ (=C_{21}(x,t)), \\
\psi(x,t) & \defeq \frac{\sqrt{3}}{2\pi} \sin^2 (\pi x_1) \sin^2 (\pi x_2) \sin \{ \pi (x_1+x_2 + t) \}.
\end{aligned}
\end{equation}
\end{example}
\noindent Note that we set $\mathbf{w} \equiv \mathbf{u}$ in the material derivative $\textnormal{D}/\textnormal{D}t$.
\par
Since Theorem~\ref{thm:error_estimates} holds for any fixed positive constant~$\delta_0$, we simply fix $\delta_0=1$.
Let $N$ be the division number of each side of the square domain.
We set $N=32, 64, 128$ and $256$, and (re)define $h \defeq 1/N$.
The time increment is set as $\Delta t = h/2$.
\par
Let us recall that $\Pi_h^L: C(\bar{\Omega}) \to M_h$ is the {\rm Lagrange} interpolation operator.
We use the same symbol~$\Pi_h^L$ to represent the {\rm Lagrange} operators on $C(\bar{\Omega})^2$ and $C(\bar{\Omega})^{2\times 2}$.
We apply the scheme~\eqref{nonlin_scheme} with the initial conditions~\eqref{cond:if}, where $\Pi_h^L$ is employed in place of $\Pi_h$ for the choice of the initial value~$\Ccko_h^0$ in~\eqref{cond:if}.
Let us note that when the exact conformation tensor $\Ccko(t)$ belongs to~$C(\bar{\Omega})^{2\time 2}$, the error estimates~\eqref{ieq:error_estimates} in Theorem~\ref{thm:error_estimates} hold true also for the choice of initial value with~$\Pi_h^L$.
For the solution~$(\ucko_h, p_h, \Ccko_h)$ of scheme~\eqref{nonlin_scheme} and the exact solution~$(\ucko, p, \Ccko)$ given by \eqref{exact_solution} we define the relative errors~$Er\,i$, $i=1,\ldots, 6$, by
\begin{align*}
Er\,1 & =\frac{\|\ucko_h - \Pi_h^L \ucko\|_{\ell^\infty(L^2)}}{\| \Pi_h^L \ucko\|_{\ell^\infty(L^2)}}, &
Er\,2 & =\frac{\|\ucko_h - \Pi_h^L \ucko\|_{\ell^2(H^1)}}{\| \Pi_h^L \ucko\|_{\ell^2(H^1)}}, \\
Er\,3 & =\frac{\|p_h - \Pi_h^L p\|_{\ell^2(L^2)}}{\| \Pi_h^L p\|_{\ell^2(L^2)}}, &
Er\,4 & =\frac{|p_h - \Pi_h^L p|_{\ell^2(|\cdot|_h)}}{\| \Pi_h^L p\|_{\ell^2(L^2)}}, \\
Er\,5 & =\frac{\|\Ccko_h - \Pi_h^L \Ccko\|_{\ell^\infty(L^2)}}{\| \Pi_h^L \Ccko\|_{\ell^\infty(L^2)}}, &
Er\,6 & =\frac{\|\Ccko_h - \Pi_h^L \Ccko\|_{\ell^2(H^1)}}{\| \Pi_h^L \Ccko\|_{\ell^2(H^1)}}.
\end{align*}
\par
In the following we show three pairs of table and figure.
Table~\ref{table:symbols} summarizes the symbols used in the figures.
Tables~\&~Figures~\ref{table:first_case}, \ref{table:second_case} and~\ref{table:third_case} present the results for the cases~$(\nu, \varepsilon)=(10^{-1}, 10^{-1})$, $(10^{-1}, 10^{-3})$ and~$(1, 0)$, respectively.
In the tables the values of the errors and the slopes are presented, and in the figures the graphs of the errors versus $h$ in logarithmic scale are shown.
In each figure the slope of the triangle is equal to 1, which shows the convergence order $O(h)$.
\par
We can see that all the errors except~$Er\,6$ for~$(\nu,\varepsilon) = (1, 0)$ are almost of the first order in~$h$ for all the cases.
These results support Theorem~\ref{thm:error_estimates}.
In the case of~$(\nu,\varepsilon) = (1, 0)$ there is no diffusion for~$\Ccko$ in equation~\eqref{model_Ccko} and the error estimate of the conformation tensor in $\ell^2(H^1)$-seminorm disappear from~\eqref{ieq:error_estimates}.
It is, therefore, natural that the slope of~$Er\,6$ does not attain~$1$.
Although we do not have any theoretical result for $Er\,3$ at present, scheme~\eqref{nonlin_scheme} has produced convergence results also in this norm.
\clearpage
\begin{table}[!htbp]
\centering
\caption{Symbols used in the figures.}\label{table:symbols}
\begin{tabular}{cccccccc}
\toprule
\multicolumn{2}{c}{$\ucko_h$} && \multicolumn{2}{c}{$p_h$} && \multicolumn{2}{c}{$\Ccko_h$} \\ \cmidrule{1-2} \cmidrule{4-5} \cmidrule{7-8}
{\LARGE $\circ$} & {\Large $\bullet$} && $\triangle$ & {\large $\blacktriangle$} && $\Box$ & $\blacksquare$ \\
$Er\,1$ & $Er\,2$ && $Er\,3$ & $Er\,4$ && $Er\,5$ & $Er\,6$ \\
\bottomrule
\end{tabular}
\end{table}
\begin{figure}[!hbtbp]
\centering
\begin{tabular}{rrrrr}
\toprule
 $h$  &  $Er\,1$ &  slope  & $Er\,2$ & slope  \\  \midrule
 $1/32$   & $2.07 \times 10^{-2}$ & --         & $2.91 \times 10^{-2}$ & --         \\
 $1/64$   & $8.29 \times 10^{-3}$ & $1.32$ & $1.21 \times 10^{-2}$ & $1.27$  \\
 $1/128$ & $3.72 \times 10^{-3}$ & $1.16$ & $5.85 \times 10^{-3}$ & $1.05$  \\
 $1/256$ & $1.77 \times 10^{-3}$ & $1.07$ & $2.60 \times 10^{-3}$ & $1.17$  \\
\cmidrule{1-5}
 $h$  &  $Er\,3$ &  slope  & $Er\,4$ & slope \\
\cmidrule{1-5}
 $1/32$   & $6.73 \times 10^{-2}$ & --         & $5.08 \times 10^{-2}$ & -- \\
 $1/64$   & $2.06 \times 10^{-2}$ & $1.71$ & $1.86 \times 10^{-2}$ & $1.45$ \\
 $1/128$ & $6.80 \times 10^{-3}$ & $1.60$ & $8.38 \times 10^{-3}$ & $1.15$\\
 $1/256$ & $2.59 \times 10^{-3}$ & $1.39$ & $3.68 \times 10^{-3}$ & $1.19$\\
\cmidrule{1-5}
 $h$       &  $Er\,5$ &  slope  &  $Er\,6$ &  slope     \\
\cmidrule{1-5}
 $1/32$   & $1.12 \times 10^{-2}$ & --         & $4.80 \times 10^{-1}$ & --  \\
 $1/64$   & $4.33 \times 10^{-3}$ & $1.37$ & $1.66 \times 10^{-2}$ & $1.54$ \\
 $1/128$ & $1.92 \times 10^{-3}$ & $1.18$ & $6.56 \times 10^{-3}$ & $1.34$ \\
 $1/256$ & $9.09 \times 10^{-4}$ & $1.08$ & $2.90 \times 10^{-3}$ & $1.18$ \\
\midrule
\vspace{7.7cm }
\end{tabular}\qquad
\includegraphics[height=8cm]{./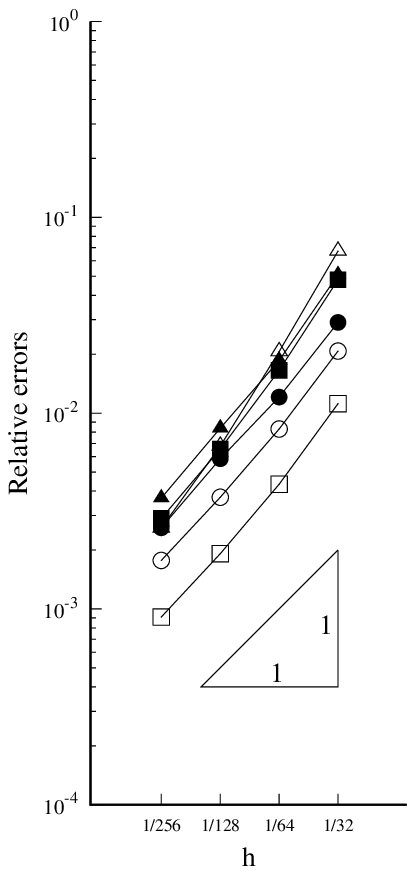}
\vspace{-7.7cm}
\captionsetup{labelformat=tableandpicture}
\caption{Errors and slopes for $(\nu,\varepsilon)=(10^{-1},10^{-1})$.}\label{table:first_case}
\end{figure}
\begin{figure}[!htbp]
\centering
\begin{tabular}{rrrrr}\toprule
 $h$  &  $Er\,1$ &  slope  & $Er\,2$ & slope  \\  \midrule
 $1/32$   & $1.75 \times 10^{-2}$ & --         & $2.71 \times 10^{-2}$ & --         \\
 $1/64$   & $6.74 \times 10^{-3}$ & $1.37$ & $1.12 \times 10^{-2}$ & $1.28$  \\
 $1/128$ & $2.91 \times 10^{-3}$ & $1.21$ & $5.49 \times 10^{-3}$ & $1.03$  \\
 $1/256$ & $1.37 \times 10^{-3}$ & $1.09$ & $2.44 \times 10^{-3}$ & $1.17$  \\
\cmidrule{1-5}
 $h$  &  $Er\,3$ &  slope  & $Er\,4$ & slope \\
\cmidrule{1-5}
 $1/32$   & $9.77 \times 10^{-2}$ & --         & $6.56 \times 10^{-2}$ & -- \\
 $1/64$   & $3.17 \times 10^{-2}$ & $1.62$ & $2.22 \times 10^{-2}$ & $1.56$ \\
 $1/128$ & $1.02 \times 10^{-2}$ & $1.63$ & $9.01 \times 10^{-3}$ & $1.30$\\
 $1/256$ & $3.62 \times 10^{-3}$ & $1.50$ & $3.78 \times 10^{-3}$ & $1.25$\\
\cmidrule{1-5}
 $h$       &  $Er\,5$ &  slope  &  $Er\,6$ &  slope     \\
\cmidrule{1-5}
 $1/32$   & $2.06 \times 10^{-2}$ & --         & $2.76 \times 10^{-1}$ & --  \\
 $1/64$   & $7.36 \times 10^{-3}$ & $1.49$ & $1.16 \times 10^{-1}$ & $1.25$ \\
 $1/128$ & $2.93 \times 10^{-3}$ & $1.33$ & $4.40 \times 10^{-2}$ & $1.40$ \\
 $1/256$ & $1.31 \times 10^{-3}$ & $1.17$ & $1.51 \times 10^{-2}$ & $1.54$ \\
\midrule
\vspace{7.7cm }
\end{tabular}\qquad
\includegraphics[height=8cm]{./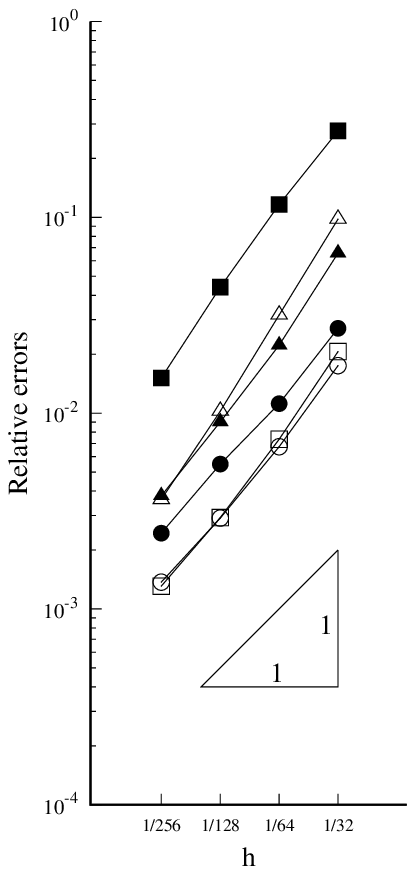}
\vspace{-7.7cm }
\captionsetup{labelformat=tableandpicture}
\caption{Errors and slopes for $(\nu,\varepsilon)=(10^{-1},10^{-3})$.}\label{table:second_case}
\end{figure}
%
%
%
%
%
%
%
%
%
%
%
%
\begin{figure}[!htbp]
\centering
\begin{tabular}{rrrrr}\toprule
  $h$  &  $Er\,1$ &  slope  & $Er\,2$ & slope  \\  \midrule
 $1/32$   & $1.36 \times 10^{-2}$ & --         & $2.30 \times 10^{-2}$ & --         \\
 $1/64$   & $4.26 \times 10^{-3}$ & $1.67$ & $9.68 \times 10^{-3}$ & $1.25$  \\
 $1/128$ & $1.40 \times 10^{-3}$ & $1.60$ & $4.84 \times 10^{-3}$ & $1.00$  \\
 $1/256$ & $5.15 \times 10^{-4}$ & $1.44$ & $2.08 \times 10^{-3}$ & $1.22$  \\
\cmidrule{1-5}
 $h$  &  $Er\,3$ &  slope  & $Er\,4$ & slope \\
\cmidrule{1-5}
 $1/32$   & $2.03 \times 10^{-1}$ & --         & $9.39 \times 10^{-2}$ & -- \\
 $1/64$   & $6.98 \times 10^{-2}$ & $1.54$ & $3.00 \times 10^{-2}$ & $1.65$ \\
 $1/128$ & $2.16 \times 10^{-2}$ & $1.69$ & $1.19 \times 10^{-2}$ & $1.34$\\
 $1/256$ & $6.86 \times 10^{-3}$ & $1.66$ & $5.05 \times 10^{-3}$ & $1.23$\\
\cmidrule{1-5}
 $h$       &  $Er\,5$ &  slope  &  $Er\,6$ &  slope     \\
\cmidrule{1-5}
 $1/32$   & $2.13 \times 10^{-2}$ & --         & $6.71 \times 10^{-1}$ & --  \\
 $1/64$   & $7.64 \times 10^{-3}$ & $1.48$ & $5.89 \times 10^{-1}$ & $0.19$ \\
 $1/128$ & $2.81 \times 10^{-3}$ & $1.44$ & $4.51 \times 10^{-1}$ & $0.38$ \\
 $1/256$ & $1.11 \times 10^{-3}$ & $1.37$ & $3.08 \times 10^{-1}$ & $0.55$ \\
\midrule
\vspace{7.7cm}
\end{tabular}\qquad
\includegraphics[height=8cm]{./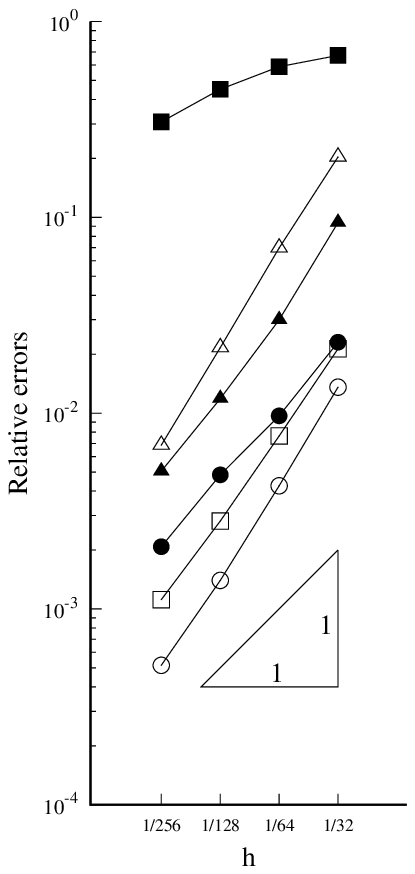}
\captionsetup{labelformat=tableandpicture}
\vspace{-7.7cm}
\caption{Errors and slopes for $(\nu,\varepsilon)=(1,0)$.}\label{table:third_case}
\end{figure}
%
%
%
%
%
%
%
%
%
%
%
%
\section{Conclusions}\label{sec:conclusions}
We have presented a nonlinear stabilized {Lagrange}--{Galerkin} scheme~\eqref{nonlin_scheme} for the Oseen-type {Peterlin} viscoelastic model.
The scheme employs the conforming linear finite elements for all unknowns, velocity, pressure and conformation tensor, together with {Brezzi}--{Pitk\"{a}ranta}'s stabilization method.
In Theorem~\ref{thm:error_estimates} we have established error estimates with the optimal convergence order, which remain true even for~$\varepsilon=0$.
We have also presented the result on the uniqueness of the solution of the scheme in Proposition~\ref{prop:uniqueness}.
It is noted that any solution of the scheme converges to the exact solution
without any relation between~$h$ and~$\Delta t$, while the condition~\eqref{ieqs:peps} or~\eqref{ieqs:peps0} is needed for the uniqueness of the solution.
Theoretical convergence order has been confirmed by  two-dimensional numerical experiments.
\par
Although we have dealt with the stabilized scheme to reduce the number of degrees of freedom, the
extension of the results to the combination of stable pairs for the velocity and the pressure, and conventional elements for the conformation tensor,  e.g., P2/P1/P2 element,  is straightforward.
Note that our analysis of the stabilized Lagrange-Galerkin method does not require to deal with the dissipation of the discrete free energy and positive definiteness of the conformation tensor $\Ccko_h$, as it was the case of the characteristic-based scheme of {Boyaval} et al.~\cite{BoyLelMan-2009} applied to the dissipative Oldroyd-B viscoelastic model.
Since the strong solution of the Peterlin model~\eqref{model} indeed satisfies these properties, cf.~\cite{Miz-2015},
they may be a useful tool in order to extend our numerical analysis to the {Peterlin} viscoelastic model with the nonlinear convective terms in future.
\par
The extension of the presented scheme to the three-dimensional case is not straightforward due to Lemma~\ref{lem:vanish_nonlin}.
Three-dimensional problems are fully treated in a forthcoming paper, Part~II, by a linear scheme,
where the convergence with the best possible order is proved for any of $\varepsilon > 0$.
%
%
%
%
%
%
%
%
%
%
%
%
%
%
%
%
%
%
%
%
%
%
%
%
\section*{\large Acknowledgements}
\small
This research was supported by the German Science Agency (DFG) under the grants IRTG 1529 ``Mathematical Fluid Dynamics'' and TRR 146 ``Multiscale Simulation Methods for Soft Matter Systems'', and by the Japan Society for the Promotion of Science~(JSPS) under the Japanese-German Graduate Externship ``Mathematical Fluid Dynamics''.
\linebreak
{H.M.} was partially supported by the German Academic Exchange Service.
{M.L.-M.} and {H.M.} wish to thank {B.~She} (Czech Academy of Science, Prague) for fruitful discussion on the topic.
H.N. and M.T. are indebted to JSPS also for Grants-in-Aid for Young Scientists~(B), No.~26800091 and for Scientific Research~(C), No.~25400212 and Scientific Research~(S), No.~24224004, respectively.
{H.N.} is supported by Japan Science and Technology Agency~(JST), PRESTO.
%
%
%
%
%
%
%
%
%
%
%
%
%
%
%
%
%
%
%
%
%
\appendix
\renewcommand{\thesection}{A}
\setcounter{lemma}{0}
\renewcommand{\thelemma}{\thesection.\arabic{Lemma}}
\setcounter{remark}{0}
\renewcommand{\theremark}{\thesection.\arabic{Remark}}
\setcounter{figure}{0}
\renewcommand{\thefigure}{\thesection.\arabic{figure}}
\setcounter{equation}{0}
\makeatletter
  \renewcommand{\theequation}{%
  \thesection.\arabic{equation}}
  \@addtoreset{equation}{section}
\makeatother
%
%
%

\begin{thebibliography}{10}

\bibitem{AboMatWeb-2002}
M.~Aboubacar, H.~Matallah, and M.F. Webster.
\newblock Highly elastic solutions for {O}ldroyd-{B} and
  {P}han-{T}hien/{T}anner fluids with a finite volume/element method: planar
  contraction flows.
\newblock {\em Journal of Non-Newtonian Fluid Mechanics}, 103:65--103, 2002.

\bibitem{BirDotJoh-1980}
R.B. Bird, P.J. Dotson, and N.L. Johnson.
\newblock Polymer-solution rheology based on a finitely extensible
  bead-spring chain model.
\newblock {\em Journal of Non-Newtonian Fluid Mechanics}, 7:213--235, 1980.

\bibitem{BonClePic-2007}
A.~Bonito, P.~Cl\'{e}ment, and M.~Picasso.
\newblock Mathematical and numerical analysis of a simplified time-dependent
  viscoelastic flow.
\newblock {\em Numerische Mathematik}, 107:213--255, 2007.

\bibitem{BonPicLas-2006}
A.~Bonito, M.~Picasso, and M.~Laso.
\newblock Numerical simulation of 3{D} viscoelastic flows with free surfaces.
\newblock {\em Journal of Computational Physics}, 215:691--716, 2006.

\bibitem{BoyLelMan-2009}
S.~Boyaval, T.~Leli\`{e}vre, and C.~Mangoubi.
\newblock Free-energy-dissipative schemes for the {O}ldroyd-{B} model.
\newblock {\em ESAIM:~M2AN}, 43:523--561, 2009.

\bibitem{BreSco-2008}
S.C. Brenner and L.R. Scott.
\newblock {\em {T}he {M}athematical {T}heory of {F}inite {E}lement {M}ethods}.
\newblock Springer, New York, 3rd edition, 2008.

\bibitem{BreDou-1988}
F.~Brezzi and J.~{Douglas~Jr.}
\newblock Stabilized mixed methods for the {S}tokes problem.
\newblock {\em Numerische Mathematik}, 53:225--235, 1988.

\bibitem{BrePit-1984}
F.~Brezzi and J.~Pitk\"{a}ranta.
\newblock On the stabilization of finite element approximations of the {S}tokes
  equations.
\newblock In W.~Hackbusch, editor, {\em {E}fficient {S}olutions of {E}lliptic
  {S}ystems}, pages 11--19, Wiesbaden, 1984. Vieweg.

\bibitem{Cia-1978}
P.G. Ciarlet.
\newblock {\em {T}he {F}inite {E}lement {M}ethod for {E}lliptic {P}roblems}.
\newblock North-Holland, Amsterdam, 1978.

\bibitem{Cle-1975}
P.~Cl\'{e}ment.
\newblock Approximation by finite element functions using local regularization.
\newblock {\em RAIRO Analyse Num\'{e}rique}, 9:77--84, 1975.

\bibitem{CroKeu-1982}
M.J. Crochet and R.~Keunings.
\newblock Finite element analysis of die swell of a highly elastic fluid.
\newblock {\em Journal of Non-Newtonian Fluid Mechanics}, 10:339--356, 1982.

\bibitem{FatKup-2004}
R.~Fattal and R.~Kupferman.
\newblock Constitutive laws for the matrix-logarithm of the conformation
  tensor.
\newblock {\em Journal of Non-Newtonian Fluid Mechanics}, 123:281--285, 2004.

\bibitem{FatKup-2005}
R.~Fattal and R.~Kupferman.
\newblock Time-dependent simulation of viscoelastic flows at high {W}eissenberg
  number using the log-conformation representation.
\newblock {\em Journal of Non-Newtonian Fluid Mechanics}, 126:23--37, 2005.

\bibitem{Keu-1986}
R.~Keunings.
\newblock On the high {W}eissenberg number problem.
\newblock {\em Journal of Non-Newtonian Fluid Mechanics}, 20:209--226, 1986.

\bibitem{LeeXu-2006}
Y.-J. Lee and J.~Xu.
\newblock New formulations, positivity preserving discretizations and stability
  analysis for non-{N}ewtonian flow models.
\newblock {\em Computer Methods in Applied Mechanics and Engineering},
  195:1180--1206, 2006.

\bibitem{LeeXuZha-2011}
Y.-J. Lee, J.~Xu, and C.-S. Zhang.
\newblock Global existence, uniqueness and optimal solvers of discretized
  viscoelastic flow models.
\newblock {\em Mathematical Models and Methods in Applied Sciences},
  21(8):1713--1732, 2011.

\bibitem{Lio-1969}
J.L. Lions.
\newblock {\em {Q}uelques {M}\'{e}thodes de {R}\'{e}solutiondes {P}robl\`{e}mes
  aux {L}imites {N}on {L}in\'{e}aires}.
\newblock Dunod et Gauthier-Villars, Paris, 1969.

\bibitem{LMNT-Peterlin_Oseen_Part_I}
M.~Luk\'{a}\v{c}ov\'{a}-Medvi\v{d}ov\'{a}, H.~Mizerov\'{a}, H.~Notsu, and
  M.~Tabata.
\newblock Numerical analysis of the {Oseen}-type {Peterlin} viscoelastic model
  by the stabilized {L}agrange--{G}alerkin method, {P}art~{II}: A linear scheme.
\newblock Submitted.

\bibitem{LukMizNec-2015}
M.~Luk\'{a}\v{c}ov\'{a}-Medvi\v{d}ov\'{a}, H.~Mizerov\'{a}, and \v{S}.
  Ne\v{c}asov\'{a}.
\newblock Global existence and uniqueness result for the diffusive {P}eterlin
  viscoelastic model.
\newblock {\em Nonlinear Analysis: Theory, Methods \& Applications},
  120:154--170, 2015.

\bibitem{LNS-2015}
M.~Luk\'{a}\v{c}ov\'{a}-Medvi\v{d}ov\'{a}, H.~Notsu, and B.~She.
\newblock Energy dissipative characteristic schemes for the diffusive
  {O}ldroyd-{B} viscoelastic fluid.
\newblock {\em International Journal for Numerical Methods in Fluids}, 2015.
\newblock Published online. DOI:~10.1002/fld.4195.

\bibitem{LMNR-2016}
M.~Luk\'{a}\v{c}ov\'{a}-Medvi\v{d}ov\'{a}, H.~Mizerov\'a, \v{S}.~Ne\v{c}asov\'a, and M.~Renardy.
\newblock Global existence result for the generalized Peterlin
viscoelastic model.
\newblock Submitted to {\em SIAM Journal of Mathematical Analysis}, 2016.

\bibitem{MarCro-1987}
J.M. Marchal and M.J. Crochet.
\newblock A new mixed finite element for calculating viscoelastic flow.
\newblock {\em Journal of Non-Newtonian Fluid Mechanics}, 26:77--114, 1987.

\bibitem{Miz-2015}
H.~Mizerov\'{a}.
\newblock Analysis and numerical solution of the {P}eterlin viscoelastic model.
\newblock 2015.
\newblock PhD thesis, University of Mainz, Germany.

\bibitem{NadSeq-2007}
L.~Nadau and A.~Sequeira.
\newblock Numerical simulations of shear-dependent viscoelastic flows with a
  combined finite element-finite volume method.
\newblock {\em Computers \& Mathematics with Applications}, 53:547--568, 2007.

\bibitem{Nec-1967}
J.~Ne\v{c}as.
\newblock {\em {L}es {M}\'ethods {D}irectes en {T}h\'eories des
  {\'E}quations {E}lliptiques}.
\newblock Masson, Paris, 1967.

\bibitem{NT-NCP}
H.~Notsu and M.~Tabata.
\newblock Error estimates of stable and stabilized {L}agrange--{G}alerkin
  schemes for natural convection problems.
\newblock {\em {\rm {a}rXiv:1511.01234~[math.NA]}}.

\bibitem{NT-2015-JSC}
H.~Notsu and M.~Tabata.
\newblock Error estimates of a pressure-stabilized characteristics finite
  element scheme for the {O}seen equations.
\newblock {\em Journal of Scientific Computing}, 65(3):940--955, 2015.

\bibitem{NT-2016-M2AN}
H.~Notsu and M.~Tabata.
\newblock Error estimates of a stabilized {L}agrange--{G}alerkin scheme for the
  {N}avier--{S}tokes equations.
\newblock {\em ESAIM:~M2AN}, 50(2):361--380, 2016.

\bibitem{Pet-1966}
A.~Peterlin.
\newblock Hydrodynamics of macromolecules in a velocity field with longitudinal
  gradient.
\newblock {\em Journal of Polymer Science Part~B: Polymer Letters}, 4:287--291,
  1966.

\bibitem{PicRap-2001}
M.~Picasso and J.~Rappaz.
\newblock Existence, a priori and a posteriori error estimates for a nonlinear
  three-field problem arising from {O}ldroyd-{B} viscoelastic flows.
\newblock {\em ESAIM:~M2AN}, 35:879--897, 2001.

\bibitem{Ren-2000}
M.~Renardy.
\newblock {\em {M}athematical {A}nalysis of {V}iscoelastic {F}lows}.
\newblock CBMS-NSF Conference Series in Applied Mathematics 73. SIAM, New York,
  2000.

\bibitem{Ren-2008}
M.~Renardy.
\newblock Mathematical analysis of viscoelastic fluids.
\newblock In {\em {H}andbook of {D}ifferential {E}quations:~{E}volutionary
  {E}quations}, volume~4, pages 229--265, Amsterdam, 2008. North-Holland.

\bibitem{Ren-2010}
M.~Renardy. 
\newblock The mathematics of myth: Yield stress behaviour as a limit of non-monotone constitutive theories. 
\newblock {\em Journal of Non-Newtonian Fluid Mechanics}, 165:519--526, 2010.

\bibitem{RenWan-2015}
M.~Renardy and T.~Wang.
\newblock Large amplitude oscillatory shear flows for a model of a thixotropic
  yield stress fluid.
\newblock {\em Journal of Non-Newtonian Fluid Mechanics}, 222:1--17, 2015.

\bibitem{RuiTab-2002}
H.~Rui and M.~Tabata.
\newblock A second order characteristic finite element scheme for
  convection-diffusion problems.
\newblock {\em Numerische Mathematik}, 92:161--177, 2002.

\bibitem{TabTag-2005}
M.~Tabata and D.~Tagami.
\newblock Error estimates of finite element methods for nonstationary thermal
  convection problems with temperature-dependent coefficients.
\newblock {\em Numerische Mathematik}, 100:351--372, 2005.

\bibitem{TabUch-2015-NS}
M.~Tabata and S.~Uchiumi.
\newblock An exactly computable {L}agrange--{G}alerkin scheme for the {N}avier--{S}tokes equations and its error estimates.
\newblock To appear in {\em Mathematics of Computation.}

\bibitem{Tem-1984}
R.~Temam.
\newblock {\em {N}avier--{S}tokes {E}quations}.
\newblock North-Holland, Amsterdam, 1984.

\bibitem{WapKeuLeg-2000}
P.~Wapperom, R.~Keunings, and V.~Legat.
\newblock The backward-tracking {L}agrangian particle method for transient
  viscoelastic flows.
\newblock {\em Journal of Non-Newtonian Fluid Mechanics}, 91:273--295, 2000.

\end{thebibliography}
 \newcommand{\noop}[1]{}

%
%
%
%
%
%
%
\end{document}